\documentclass[12pt]{article}

\usepackage{amsmath, amssymb, amsthm, mathrsfs,physics}
\usepackage{hyperref}  
\usepackage{enumitem}  
\usepackage{geometry}  
\hypersetup{colorlinks=true,linkcolor=red, anchorcolor=blue, citecolor=blue, urlcolor=red, filecolor=magenta, pdftoolbar=true}

\numberwithin{equation}{section}

\newtheorem{theorem}{Theorem}[section]
\newtheorem{lemma}[theorem]{Lemma}
\newtheorem{proposition}[theorem]{Proposition}

\theoremstyle{definition}
\newtheorem{definition}[theorem]{Definition}

\theoremstyle{remark}
\newtheorem{remark}[theorem]{Remark}
\newtheorem{example}[theorem]{Example}

\newcommand{\N}{\mathbb{N}}  
\newcommand{\Z}{\mathbb{Z}}  

\allowdisplaybreaks

\title{Stable Initial-State Recovery from Dynamical Samples using Nagy-Type and Pollard-Hilding-Type  Frame Perturbations}

\author{Ruchi and Lalit Kumar Vashisht$^*$\\
	\small{Department of Mathematics, University of Delhi}\\
	\small{Emails: rgarg@maths.du.ac.in \quad $\&$ \quad lalitkvashisht@gmail.com}}
\date{\small{$^*$Corresponding author}\\May 23, 2026}

\begin{document}
	
	\maketitle
	
	\begin{abstract}
		Stimulated by Aldroubi and his collaborator's recent work on dynamical sampling, we consider a homogenous discrete dynamical system of the form
		\begin{align*}
			f_n = A f_{n-1}= A^{n}f, \quad f_0 = f,
		\end{align*}
		where $A$ is a bounded linear operator on a separable Hilbert space $\mathcal{H}$, which is known as the evolution operator, and $f_0 \in \mathcal{H}$ is the unknown  initial state. The associated dynamical samples are given by the collection $\{\langle A^n f, g \rangle  : g \in \mathcal{G}, 0 \leq  n < L(g)\}$, where $\mathcal{G} \subset \mathcal{H}$, is a finite or countable sampling set, and $L : \mathcal{G} \rightarrow \N \cup \{\infty\}$ is a function. We analyze the stability of perturbed dynamical sampling systems in the sense of Nagy and Pollard–Hilding. More precisely,
		we establish sufficient conditions for the stable recovery of an initial state from perturbed dynamical samples obtained by changing the sampling vector, the evolution operator, or simultaneously both, within the framework of Nagy-type and Pollard–Hilding-type  perturbation of frames. In our work, the sampling set $\mathcal{G}$ consists of a single vector $g$, that is, $|\mathcal{G}|=1$, and $L(g)=\infty$. \\

		\textbf{AMS Subject Classification (2020):} 94A12, 94A20, 42C15,  46N99.\\
		\textbf{Keywords:} Sampling theory; Frames; Reconstruction; Perturbation.
	\end{abstract}
	
	\section{Introduction}
	\label{sec:intro}
Dynamical sampling is a growing area of research that is centered at the confluence of sampling theory, frame theory, operator theory, and signal processing.. The field of dynamical sampling has developed rapidly since the pioneering work of Lu et al. \cite{SRDL} and Aldroubi et al. \cite{DSIS, DSTS}, and now includes several active directions such as initial-state recovery, system identification, and source recovery \cite{DSIS,DSTS,ERSS,KSMDS,RRDSE, CFSRDS,PSDDDS, Ruchi, SIDS}. The central objective of dynamical sampling is to recover a function or signal from samples generated through the action of an evolution operator on a collection of sampling vectors.  More precisely, let $A$ be a bounded linear operator on a separable Hilbert space $\mathcal{H}$ and $\mathcal{W}$ be a closed subspace of $\mathcal{H}$. A general discrete dynamical sampling system is of the form
	\begin{align*}
		f_n = A f_{n-1}+w, \, \,  f_0 = f, \,  w \in \mathcal{W},
	\end{align*}
	where $A$ is called the evolution operator, $f_0 \in \mathcal{H}$ is the initial state, and $w \in \mathcal{W}$ is the source term or forcing term. Given a finite or countable subset $\mathcal{G} \subset \mathcal{H}$, known as the sampling set, and a function $L : \mathcal{G} \rightarrow \N \cup \{\infty\}$, the associated measurements are $	\{\langle f_n, g \rangle  : g \in \mathcal{G}, 0 \leq  n < L(g)\}$.
	These measurements are referred to as space-time samples or dynamical samples. The general dynamical sampling problem is to determine conditions on $A$, $\mathcal{G}$ and $L : \mathcal{G} \rightarrow \N \cup \{\infty\}$, that allow the recovery of the unknown components of the system from the given measurements. Depending on the setting, the unknown quantity may be the initial state $f$, the evolution operator $A$, or the source term $w$. In applications, it is important that the sampling data enable stable recovery that remains robust in the presence of perturbations and noise. By stable recovery, we mean that there exists a well-defined bounded linear reconstruction operator $\mathscr{R}: \ell^{2}(S) \rightarrow \mathcal{H}$ satisfying $\mathscr{R}(\{\langle f_n, g \rangle \}_{(g,n) \in S})=f, \, \text{for all} \, f \in \mathcal{H}$, where $S=\{(g,k) : g \in \mathcal{G}, 0 \leq  k < L(g)\}$ and under suitable conditions on $A$, $L$, and $\mathcal{G}$, the collection of measurements $\{\langle f_n, g \rangle  : (g,n) \in S\}$ forms a square-summable sequence in $\ell^{2}(S)$. An important special case arises when the source term vanishes, i.e., $w=0$. In this homogeneous setting, the system reduces to  $f_n=A f_{n-1}=A^{n}f$, and the associated dynamical samples become $	\{\langle A^n f, g \rangle  : g \in \mathcal{G}, 0 \leq  n < L(g)\}$.
	The corresponding recovery problem is to determine conditions on $A$, $\mathcal{G}$ and $L $ that allow the reconstruction of the initial state $f \in \mathcal{H}$ from these measurements. In this case, the problem is closely connected with frames (see below), since $\langle A^n f, g \rangle = \langle  f, (A^{*})^{n}g \rangle $, where $A^{*}$ denotes the Hilbert-adjoint of $A$. Consequently, recovery of the initial state is equivalent to determining whether the collection $\{(A^{*})^{n}g\}_{g \in \mathcal{G}, \;  0 \leq n < L(g)}$, is complete or forms a frame for $\mathcal{H}$. Such systems arise naturally in applications related to signal processing, wireless sensor networks, and time-varying signals \cite{SRDL}. Completeness guarantees uniqueness of recovery, while the frame property ensures stable reconstruction. This connection was  established due to Aldroubi and Petrosyan in the following proposition:
	\begin{proposition}$\cite{DSSIO}$\label{P2_P2.3}
		In a separable Hilbert space $\mathcal{H}$, the following statements hold:
		\begin{enumerate}
			\item  Any $f \in \mathcal{H}$ can be recovered from $\{\langle A^{n}f,g\rangle\}_{g \in \mathscr{G}, \;  0 \leq n < L(g)}$ if and
			only if the system $\{(A^{*})^{n}g\}_{g \in \mathscr{G}, \;  0 \leq n < L(g)}$ is complete in $\mathcal{H}$.
			\item Any $f \in \mathcal{H}$ can be recovered from $\{\langle A^{n}f,g\rangle\}_{g \in \mathscr{G}, \; 0 \leq n < L(g)}$ in a stable way if and
			only if the system $\{(A^{*})^{n}g\}_{g \in \mathscr{G}, \; 0 \leq n < L(g)}$  is a frame in $\mathcal{H}$.
		\end{enumerate}
	\end{proposition}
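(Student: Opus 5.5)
The plan is to translate both statements into properties of the analysis operator of the system $\mathcal{F}=\{(A^{*})^{n}g\}_{(g,n)\in S}$, where $S=\{(g,n): g\in\mathscr{G},\,0\le n<L(g)\}$. By the identity $\langle A^{n}f,g\rangle=\langle f,(A^{*})^{n}g\rangle$, the data map is
\[
T: \mathcal{H}\to \mathbb{C}^{S}, \qquad Tf=\{\langle f,(A^{*})^{n}g\rangle\}_{(g,n)\in S}.
\]
I will interpret "recovered" as injectivity of $T$. "Recovered in a stable way" will mean that $T$ maps into $\ell^{2}(S)$, is bounded, and admits a bounded linear left inverse $\mathscr{R}:\ell^{2}(S)\to\mathcal{H}$ with $\mathscr{R}T=I$. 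This is the definition from the introduction; it is equivalent to $T$ being bounded and bounded below.

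\textbf{Part 1.} The kernel of $T$ is $\{f : f\perp (A^{*})^{n}g \text{ for all } (g,n)\in S\} = (\overline{\operatorname{span}}\,\mathcal{F})^{\perp}$. Hence $T$ is injective, meaning any two states with the same samples coincide, if and only if this orthogonal complement is $\{0\}$. That is exactly completeness of $\mathcal{F}$, and linearity of $T$ is all that is needed.

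\textbf{Part 2, frame implies stable recovery.} Suppose $\mathcal{F}$ is a frame with bounds $\alpha\|f\|^{2}\le\sum_{(g,n)\in S}|\langle f,(A^{*})^{n}g\rangle|^{2}\le\beta\|f\|^{2}$. The upper bound shows $T:\mathcal{H}\to\ell^{2}(S)$ is bounded. The frame operator $S_{\mathcal{F}}=T^{*}T$ is then bounded, positive and invertible, with $\alpha I\le S_{\mathcal{F}}\le\beta I$. Set $\mathscr{R}=S_{\mathcal{F}}^{-1}T^{*}$. This is bounded with $\|\mathscr{R}\|\le \sqrt{\beta}/\alpha$, and $\mathscr{R}Tf=f$; equivalently, $f=\sum_{(g,n)} \langle A^{n}f,g\rangle\, S_{\mathcal{F}}^{-1}(A^{*})^{n}g$ via the canonical dual frame.

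\textbf{Part 2, stable recovery implies frame.} Suppose $T$ maps into $\ell^{2}(S)$ and $\mathscr{R}T=I$ with $\mathscr{R}$ bounded. This is the step I expect to be the main obstacle, because one must make sure the definition of stability includes that the samples form an $\ell^{2}$ sequence, as the introduction stipulates. Once $T$ is defined everywhere into $\ell^{2}(S)$, I apply the closed graph theorem. If $f_k\to f$ and $Tf_k\to c$ in $\ell^{2}$, then convergence is coordinatewise, and each coordinate functional $f\mapsto\langle f,(A^{*})^{n}g\rangle$ is continuous, so $c=Tf$. Thus $T$ is bounded, which gives the upper frame bound $\beta=\|T\|^{2}$. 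The lower bound follows from $\|f\|=\|\mathscr{R}Tf\|\le\|\mathscr{R}\|\,\|Tf\|$, which gives $\alpha=\|\mathscr{R}\|^{-2}$. Hence $\mathcal{F}$ is a frame for $\mathcal{H}$, completing the equivalence.
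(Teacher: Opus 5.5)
Your proof is correct. Note that the paper gives no proof of this proposition: it is quoted from Aldroubi--Petrosyan \cite{DSSIO} and used as a black box in every theorem of Sections~\ref{P2_sec3} and~\ref{P2_sec4}, so there is no in-paper argument to compare against.

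Your route fits the definition of stable recovery stated in the introduction, namely square-summable samples together with a bounded reconstruction operator $\mathscr{R}:\ell^{2}(S)\to\mathcal{H}$. Under that definition the lower frame bound $\|\mathscr{R}\|^{-2}$ is formal. The only genuinely analytic input is the upper bound, which you correctly obtain from the closed graph theorem. Equivalently, the uniform boundedness principle shows that a sequence with $\sum_k|\langle f,\varphi_k\rangle|^{2}<\infty$ for every $f$ is automatically Bessel.

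In much of the dynamical sampling literature, stable recovery is instead defined directly by a two-sided inequality $c_1\|f\|^{2}\le\sum|\langle A^{n}f,g\rangle|^{2}\le c_2\|f\|^{2}$. With that definition, part~2 follows at once from the definition of a frame. Your argument additionally shows that the operator-theoretic definition used in this paper is equivalent to that one.

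Your caveat about the $\ell^{2}$ requirement is well placed. Take $A$ to be twice the left shift on $\ell^{2}(\Z^{+})$ and $g=e_0$, so that $(A^{*})^{n}g=2^{n}e_n$. The bounded map $c\mapsto\sum_n 2^{-n}c_ne_n$ recovers $f$ from its samples whenever these are square-summable. Yet $\{2^{n}e_n\}_{n\geq 0}$ is not a frame, because the samples of a general $f$ fail to be in $\ell^{2}(\Z^{+})$.
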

	
Recall that a  discrete \emph{frame} for separable Hilbert space $\mathcal{H}$ is a  countable collection  $\{\varphi_k\}_{k \in \mathbb{I}}$ of members of  $\mathcal{H}$ such that the following inequality
		(called the \emph{frame inequality}) holds for some positive real numbers $A_o$ and $B_o$:
		\begin{align}\label{b01.1}
			A_o \|\varphi\|^2\leq  \sum_{k \in \mathbb{I}}|\langle \varphi, \varphi_k\rangle|^2 \leq B_o \|\varphi\|^2 \ \text{for all} \ \varphi \in \mathcal{H}.
		\end{align}
		$A_o$ and $B_o$ are called the \emph{lower frame bound} and \emph{upper frame bound} of the frame $\{\varphi_k\}_{k \in \mathbb{I}}$,  respectively. We say that  $\{\varphi_k\}_{k \in \mathbb{I}}$ satisfies the
		\emph{lower frame condition} if the left inequality in  \eqref{b01.1} holds. If $\{\varphi_k\}_{k \in \mathbb{I}}$ satisfies  only upper inequality in \eqref{b01.1}, then we say that $\{\phi_k\}_{k \in \mathbb{I} }$   is  a \emph{Bessel sequence} with the \emph{Bessel bound} $B_o$, or it satisfies the \emph{upper frame condition}.
		If $\{\varphi_k\}_{k \in \mathbb{I}}$ is a Bessel sequence, then the map $T :\ell^2(\mathbb{I}) \rightarrow \mathcal{H}$, given by
		\begin{align*}
			T\{c_{k}\}_{k \in \mathbb{I}}=\sum_{k \in \mathbb{I}}c_k \varphi_k
		\end{align*}
		is called the \emph{synthesis operator} or \emph{pre-frame operator} of $\{\varphi_k\}_{k \in \mathbb{I}}$. Its Hilbert-adjoint operator $T^*: \mathcal{H} \rightarrow  \ell^2(\mathbb{I})$  is called the \emph{analysis operator} of $\{\varphi_k\}_{k \in \mathbb{I}}$ that is given by $T^*\varphi = \{\langle \varphi, \varphi_k\rangle\}_{k \in \mathbb{I}}$. The \emph{frame operator} of the Bessel sequence $\{\varphi_k\}_{k \in \mathbb{I}}$ is the composition:
		\begin{align*}
			S = TT^*: \mathcal{H} \rightarrow \mathcal{H}, \quad S \varphi = \sum_{k \in \mathbb{I}}\langle \varphi, \varphi_k\rangle \varphi_k.
		\end{align*}
		The frame operator of a Bessel sequence is bounded, linear, and self-adjoint; but may not be  invertible on $\mathcal{H}$. If $\{\varphi_k\}_{k \in \mathbb{I}}$ is a frame for $\mathcal{H}$, then the frame operator is invertible on  $\mathcal{H}$. This gives the stable reconstruction formula as a series, not necessarily unique, for each element of the space $\mathcal{H}$:
		\begin{align*}
			\varphi = SS^{-1}\varphi =\sum_{k \in \mathbb{I}} \langle \varphi, S^{-1}\varphi_k \rangle \varphi_k  \ \text{for every} \  \varphi \in \mathcal{H}.
		\end{align*}
		Scalars $\langle \varphi, S^{-1}\varphi_k \rangle $, $k \in \mathbb{I}$, are called the \emph{frame coefficients}. Frame coefficients associated with a frame contain all information of signals in the underlying signal space. For basic theory of frames and its applications, we refer to texts  by Christensen \cite{Ole Christ.},  Han \cite{BHan2}, Heil \cite{C.Heil},  Krivoshein, Protasov   Skopina \cite{KPSA},  Young \cite{Young}, Zhang and Jorgensen  \cite{Zpelle}.

One of the natural questions in dynamical sampling is the stability of reconstruction procedures under perturbations. In practical situations, the underlying evolution operator governing the dynamics, the sampling vector, or both simultaneously may be affected by modeling errors, noise, or numerical inaccuracies. Consequently, it becomes important to determine whether the stable recovery properties persist when the underlying dynamical sampling system is perturbed. This motivates the study of perturbation theory in the context of dynamical sampling systems in Hilbert spaces. In particular, we analyze the stable reconstruction of the initial state of a homogeneous discrete dynamical system from perturbed dynamical samples obtained through perturbations of the sampling vector, the evolution operator, or both simultaneously. The analysis is carried out using Nagy-type and Pollard–Hilding-type perturbation results for frames. Throughout this work, we restrict ourselves to the case where $\mathcal{G}=\{g\}$ and $L(g)=\infty$.
	
	\subsection{Related work}
The field of dynamical sampling is structured around three core pillars: initial state recovery or space-time trade-off \cite{DSIS, DSTS, ERSS}, system identification \cite{KSMDS, SIDS}, and source recovery \cite{RRDSE, CFSRDS, PSDDDS, Ruchi}. The conceptual roots of dynamical sampling can be traced back to the work of Lu et al. on spatiotemporal sampling of diffusion fields, where the trade-off between spatial and temporal measurements was investigated in the context of signal processing and sensor networks \cite{SRDL}. A systematic mathematical formulation of dynamical sampling was introduced by Aldroubi et al. in 2013, where they studied the recovery of signals evolving under an operator from coarse spatial samples taken over time \cite{DSIS, DSTS}.

	The fundamental problem of initial-state recovery was studied extensively in \cite{DSTS}, where the homogeneous setting $w=0$ was considered. In this work, the sampling vectors were chosen from finite or countable orthonormal bases of the underlying Hilbert space, and the evolution operator was taken to be a discrete convolution operator. The authors established conditions under which a signal can be recovered from its space-time samples and highlighted the role of the trade-off between spatial and temporal sampling densities. In \cite{Ald00}, the authors studied the problem in both finite and infinite-dimensional settings. For  the finite-dimensional case, they obtained explicit reconstruction conditions for diagonalizable as well as general matrices acting on $\mathbb{C}^d$, and showed that vectors can be recovered from suitable space-time samples by appropriately choosing spatial sampling locations and temporal measurements. In the infinite-dimensional setting, the analysis was carried out for certain classes of self-adjoint operators on $\ell^2(\mathbb{N})$. Another important development in initial-state recovery involved operator orbits and iterative systems of the form $\{A^{n}g\}_{g \in \mathcal{G}, \;  n \geq 0}$, where  researchers investigated and analyzed their frame and Bessel properties, thereby linking dynamical sampling with frame theory and operator orbits \cite{DSSIO, DSFIS}. Since then, the connection between dynamical sampling and frames generated by iterative actions of operators has become an active area of research. The field subsequently expanded to system identification, where the evolution operator itself is unknown. Researchers also used generalized Krylov subspace methods to recover the unknown evolution operator and its spectrum from partial space-time observations \cite{KSMDS}, and studied dynamical sampling systems generated by iterative actions of normal operators \cite{ICNO}. In 2017, Sui Tang studied system identification in dynamical sampling and introduced reconstruction methods connected with convolution operators and Prony-type techniques \cite{SIDS}. Applications of dynamical sampling to bandlimited functions can be found in \cite{SFBF, AUla}.

	Another significant direction in the field is source recovery, where the objective is to identify unknown forcing terms driving the dynamics. More recent works have studied dynamical sampling in the presence of additive noise \cite{DSARN}, established recovery conditions for periodic and spatially constant sources in the discrete setting \cite{DSRSCS}, and developed predictive algorithms in the continuous setting for detecting burst-like forcing terms even in the presence of noise and measurement errors \cite{PADBF}. These approaches are robust under Lipschitz-continuous background noise and measurement perturbations. In \cite{CDSR}, the authors studied dynamical sampling on $\ell^2(\mathbb{\Z})$, where the sampling vectors are chosen from the standard basis and the evolution operator is a convolution operator generated by a kernel in $\ell^1(\mathbb{\Z})$.
	 By 2025, the field reached another milestone with the appearance of a comprehensive survey on dynamical sampling \cite{DSS}.  Beyond these three principal directions, dynamical sampling is increasingly finding applications in graph signal processing and mobile sampling, where sensors move along prescribed trajectories \cite{GongL, HuangII, Yao}. For applications of dynamical sampling in tensor products, we refer to Zhang and Jorgensen \cite{WangY}.

	In some situations, such as when there is a loss of frame coefficients during signal processing, we need to process the signal using a different family of vectors that can preserve both frame conditions. Perturbation theory, which is concerned with a family of vectors in Banach spaces that are close to a given family of vectors with some fundamental properties, leaving the family with the same property, has been one of the most active and important topics in pure mathematics over the past seven decades. The earliest research work in papers  \cite{NCTPWT, Pollard, BSPWC} on Paley-Wiener-type, Nagy-type and Pollard–Hilding-type showed that completeness and basic sequences are preserved under these variants of perturbation. Invariance of fundamental properties under different types of perturbation is highly applicable in differential equations and operator theory \cite{Kato}. In a normed space $\mathcal{X}$, the stability of completeness of sequences  in the sense of Paley and Wiener says that  ``if a sequence $\{y_k\}_{k=1}^{\infty} \subset \mathcal{X}$ is ``near'' a complete sequence  $\{x_k\}_{k=1}^{\infty} \subset \mathcal{X}$ in some suitable sense, then $\{y_k\}_{k=1}^{\infty}$ is also complete in $\mathcal{X}$''. More precisely, we say that two sequences $\{x_k\}_{k=1}^{\infty}$ and $\{y_k\}_{k=1}^{\infty}$ in a normed space  $\mathcal{X}$ satisfies the \emph{Paley-Wiener property}, or closed in the sense of Paley-Wiener, if there exists a real number $\lambda \in (0, 1)$ such that $\|\sum_{j=1}^{n} a_j(x_j- y_j)\| \leq \lambda \|\sum_{j=1}^{n} a_j x_j\|$ holds for any scalars $a_1, a_2, \cdots a_n$.
		For Paley-Wiener-type stability for completeness of sequences, we refer to \cite{NCTPWT, Pollard}.
		In \cite{BSPWC},  Retherford generalized stability results in the sense of Paley-Wiener for basic sequences and completeness of sequences in complete linear metric spaces and normed spaces that are defined as follows:
		\begin{definition}\cite{BSPWC} \label{concls009} Two sequences $\{x_k\}_{k=1}^{\infty}$ and $\{y_k\}_{k=1}^{\infty}$ in a normed space $\mathcal{X}$ are said to have the property
			\begin{enumerate}
				\item for \textbf{ Pollard-Hilding} if for each positive real number $k$, there exist real numbers $\lambda_1$, $\lambda_2$ with $0 \leq \lambda_i < \text{min}\{1, 2^{1- \frac{1}{k}}\}, i= 1, 2$, such that
				\begin{align*}
					\Big\|\sum_{j=1}^{n} a_j(x_j- y_j)\Big\| \leq  \left(\lambda_1 \Big\|\sum_{j=1}^{n} a_j x_j\Big\|^k +  \lambda_2 \Big\|\sum_{j=1}^{n} a_j y_j\Big\|^k\right)^{\frac{1}{k}}
				\end{align*}
				holds for any scalars $a_1, a_2, \cdots a_n$.
				\item for \textbf{Nagy} if there exist real numbers $\lambda \in [0, 1)$,  $\nu \in [0, 1)$ and $\mu$ with $0 \leq \mu, \,  \mu^2 \leq (1- \lambda)(1-\nu)$ such that
				\begin{align*}
					\Big\|\sum_{j=1}^{n} a_j(x_j- y_j)\Big\|^2 \leq  \lambda \Big\|\sum_{j=1}^{n} a_j x_j\Big\|^2 +  \mu   \Big\|\sum_{j=1}^{n} a_j x_j\Big\| \cdot  \Big\|\sum_{j=1}^{n} a_j y_j\Big\|   +    \nu \Big\|\sum_{j=1}^{n} a_j y_j\Big\|^2
				\end{align*}
				holds for any scalars $a_1, a_2, \cdots a_n$.
			\end{enumerate}
		\end{definition}
		Stability in the sense of Nagy, with $\mu =0$, for completeness of sequences in separable Hilbert spaces was proved by Pollard in \cite{Pollard}. This is generalized Paley-Wiener-type stability for complete sequences in separable Hilbert spaces. Also see a published paper by Hilding \cite{NCTPWT} for a variant of Paley-Wiener-type stability for complete sequences in separable Hilbert spaces.
	
	Recall that frames for a signal space enable the stable reconstruction of each signal within that space. Therefore, its analysis is lacking if the  perturbation of  frames is not studied. Further, in the perturbation of frames, it is important that their fundamental properties  must be invariant.   In \cite{Casz}, Casazza and Kalton extended the Paley-Wiener perturbation theory to Banach spaces. Casazza and Christensen in \cite{POATH} presented some applications of Paley-Wiener stability to frames in separable Hilbert spaces. Favier and   Zalik, in \cite{FZalik}, studied perturbation of frames and Riesz bases in separable Hilbert spaces. The authors of \cite{JindIII} studied  perturbation of frames associated with the Weyl-Heisenberg group and the extended affine group. Some variants of  perturbation of Hilbert-Schmidt frames in terms of weaving vectors  can be found in \cite{JyotiI}. Recently, by using the Fourier transforms technique,  Paley-Wiener-type perturbation of non-uniform frames was studied by authors of  \cite{Haribus}. In locally convex spaces, frame perturbation was studied by authors of \cite{Lalit}. As already observed,  perturbation of frames has potential applications in time-frequency analysis \cite{KG},   mathematical physics \cite{GosoI, JyotiII}, signal processing \cite{BHan2, C.Heil}, distributed signal processing \cite{DeepI, deep2}, etc.
		Our paper contributes   perturbation problems for dynamical sampling systems in separable Hilbert spaces for the stable reconstruction of signals. More precisely, we study the stable recovery of an initial state of a homogenous discrete dynamical system from perturbed dynamical samples obtained by changing the sampling vector, the evolution operator, or simultaneously both using Nagy-type and Pollard–Hilding-type frame perturbations. In the present paper, the sampling set $\mathcal{G}$ consists of a single vector $g$, that is, $|\mathcal{G}|=1$, and $L(g)=\infty$.

	\subsection{Outline of the work}
	
	The paper is outlined as follows:  In Section \ref{sec:prelim P2}, we recall some basic results related to frames and inequalities that will be used in the sequel. Section \ref{P2_sec3} is devoted to the stable recovery of the initial state  of homogenous dynamical sampling system discussed above under Nagy-type frame perturbation involving perturbations of the sampling vector, evolution operator, and simultaneous perturbations of both. In particular, Theorem \ref{P2_Th3.1} establishes the preservation of stable recovery when the sampling vector $g$ is replaced by another vector $h$ satisfying a Nagy-type frame perturbation condition. In this setting, a sufficient condition ensuring the stable  recovery of the initial state obtained by the perturbed evolution operator is given in Theorem \ref{P2_Th3.5}, while Theorem \ref{P2_Th3.9} gives the stable recovery under simultaneous perturbation of  the sampling vector and the evolution operator. In Section \ref{P2_sec4}, we investigate the stable recovery of the initial state  of the homogenous dynamical sampling system   under Pollard–Hilding-type frame conditions by changing the  sampling vector, the evolution operator and both the vector and evolution operator simultaneously, and establish corresponding stable recovery results. Theorem \ref{P2_Th3.13} shows that stable recovery is preserved under such perturbation of the dynamical samples induced by changes in the sampling vector. Theorem \ref{P2_Th3.15} investigate perturbation of the dynamical samples induced by changes in the evolution operator within the framework of Pollard–Hilding-type frame perturbation, while Theorem \ref{P2_Th3.17} studies simultaneous perturbations of the sampling vector and the evolution operator in this framework and establishes the preservation of stable recovery. Finally, we conclude the paper with the remark  concerning the stability of general frames under the Pollard–Hilding-type frame perturbation. Illustrative examples are also given to highlight the applicability of the results.

	\section{Preliminaries}\label{sec:prelim P2}
Throughout the paper,  the symbol $\mathbb{N}$ denotes the set of natural numbers and  $\mathbb{Z}^{+} = \mathbb{N} \bigcup \{0\}$, denotes the set of non-negative integers. 	In this section, we give some basic results that will be used in the paper. We begin with a characterization of a Bessel sequence in separable Hilbert spaces, which can be found in texts \cite{Ole Christ., C.Heil}.
	\begin{theorem} $\cite[\, p. 189]{C.Heil}$ \label{P2_Th2.4}
		A  sequence $\{f_k\}_{k \in \mathbb{I}}$ of elements in a separable infinite-dimensional complex Hilbert space $\mathcal{H}$ is a Bessel sequence with Bessel bound $B_o$ if and only if the map
		$T :\ell^2(\mathbb{I}) \rightarrow \mathcal{H}$, given by
		\begin{align*}
			T\{c_{k}\}_{k \in \mathbb{I}}=\sum_{k \in \mathbb{I}}c_k f_k
		\end{align*}
		is a well-defined bounded linear operator with $\norm{T} \leq B_o $.
	\end{theorem}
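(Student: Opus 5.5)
The plan is to prove the two implications separately, using only Cauchy--Schwarz and duality in $\mathcal{H}$ and in $\ell^2(\mathbb{I})$. The norm estimate that both directions naturally give is $\norm{T}^2 \leq B_o$, i.e. $\norm{T}\leq\sqrt{B_o}$. This is the standard form of the cited result in \cite{C.Heil}, and the bound in the statement should be read this way. The literal bound $\norm{T}\leq B_o$ cannot hold in general: rescaling $\{f_k\}$ by a small constant makes $B_o<1$ while the Bessel property is unchanged, and then $\sqrt{B_o}>B_o$.

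\emph{Bessel $\Rightarrow$ $T$ bounded.} Let $\{f_k\}$ have Bessel bound $B_o$, and fix a finite set $F\subset\mathbb{I}$ and scalars $\{c_k\}_{k\in F}$. By duality,
\begin{align*}
\Big\|\sum_{k\in F}c_kf_k\Big\| = \sup_{\|g\|=1}\Big|\sum_{k\in F}c_k\langle f_k,g\rangle\Big| \leq \Big(\sum_{k\in F}|c_k|^2\Big)^{1/2}\sup_{\|g\|=1}\Big(\sum_{k\in\mathbb{I}}|\langle g,f_k\rangle|^2\Big)^{1/2}.
\end{align*}
By the Bessel inequality, the right-hand side is at most $\sqrt{B_o}\,(\sum_{k\in F}|c_k|^2)^{1/2}$.

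For general $\{c_k\}\in\ell^2(\mathbb{I})$, enumerate $\mathbb{I}$ (it is countable). Applying the finite estimate to blocks $F=\{m+1,\dots,n\}$ shows that the partial sums of $\sum c_kf_k$ are Cauchy, since the tails of $\sum|c_k|^2$ tend to zero. Applying the same estimate to arbitrary finite sets shows the limit does not depend on the enumeration, so the series converges unconditionally. Hence $T$ is well defined. Linearity is clear, and passing to the limit in the finite estimate gives $\|T\{c_k\}\|\leq\sqrt{B_o}\,\|\{c_k\}\|$.

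\emph{$T$ bounded $\Rightarrow$ Bessel.} Suppose $T$ is well defined and bounded. Let $\{e_k\}$ be the canonical basis of $\ell^2(\mathbb{I})$; then $Te_k=f_k$ directly from the definition. For $g\in\mathcal{H}$, the $k$-th coordinate of $T^*g$ is
\begin{align*}
\langle T^*g,e_k\rangle=\langle g,Te_k\rangle=\langle g,f_k\rangle,
\end{align*}
so $T^*g=\{\langle g,f_k\rangle\}_{k\in\mathbb{I}}$. It follows that
\begin{align*}
\sum_{k\in\mathbb{I}}|\langle g,f_k\rangle|^2=\|T^*g\|^2\leq\|T\|^2\|g\|^2 .
\end{align*}
Thus $\{f_k\}$ is Bessel with bound $\|T\|^2$. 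In particular, if $\|T\|\leq\sqrt{B_o}$, then $B_o$ is a Bessel bound, which closes the equivalence.

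The only step needing real care is the well-definedness of $T$ in the first direction, namely the unconditional convergence of $\sum c_kf_k$ for an arbitrary countable index set. I would handle it through the uniform finite-set estimate above, which controls every tail regardless of ordering. Everything else is a routine duality computation.
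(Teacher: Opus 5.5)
Your proof is correct and is the standard argument behind the cited result; the paper gives no proof of its own, only the reference to Heil. The two directions are the Cauchy--Schwarz/duality estimate on finite sums, and the identification $T^*g=\{\langle g,f_k\rangle\}_{k\in\mathbb{I}}$ together with $\|T^*\|=\|T\|$. You are also right that the bound must read $\|T\|\leq\sqrt{B_o}$. For instance, $f_k=\sqrt{B_o}\,e_k$ with $\{e_k\}$ an orthonormal basis and $B_o<1$ breaks the literal statement, and the paper itself applies the theorem in the corrected form, writing $\|T_A\|\leq\sqrt{\beta_A}$ in the proof of Theorem~\ref{P2_Th3.5}.
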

	The following result says that for a sequence $\{f_k\}_{k \in \mathbb{I}} \subset \mathcal{H}$ to be a Bessel sequence it is enough to check an estimate in terms of the series given in Theorem \ref{P2_Th2.4} for finite sequences.
	\begin{theorem} $\cite[\, p. 191]{C.Heil}$ \label{P2_Th2.5}
		Let $\{f_k\}_{k \in \mathbb{I}}$ be a sequence of elements in $\mathcal{H}$ such that there exists a constant $B_o > 0$ satisfying
		\begin{align*}
			\norm{\sum c_k f_k}^{2} \leq B_o \sum \abs{c_k}^2
		\end{align*}
		for all finite scalar sequences $\{c_k\}_{k \in \mathbb{I}}$. Then the series $ \sum_{k \in \mathbb{I}} c_k f_k $ converges for every sequences $\{c_k\}_{k \in \mathbb{I}} \in \ell^{2}(\mathbb{I})$. Moreover, the sequence $\{f_k\}_{k \in \mathbb{I}}$ is a Bessel sequence with Bessel bound $B_o$.
	\end{theorem}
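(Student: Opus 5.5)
The argument is the standard one for Theorem \ref{P2_Th2.5}. We first use the finite-sequence estimate to show that partial sums of $\sum c_k f_k$ are Cauchy for every $\{c_k\} \in \ell^2(\mathbb{I})$, and then read off the Bessel inequality from the resulting synthesis operator by duality. Since $\mathbb{I}$ is countable, fix an enumeration $\mathbb{I}=\{k_1,k_2,\dots\}$ (the finite case is trivial) and take partial sums with respect to it.

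\textbf{Step 1 (convergence).} Take $\{c_k\} \in \ell^2(\mathbb{I})$ and $m<n$. Apply the hypothesis to the finitely supported sequence equal to $c_{k_j}$ for $m<j\le n$ and $0$ otherwise. This gives
\begin{align*}
\Big\|\sum_{j=m+1}^{n} c_{k_j} f_{k_j}\Big\|^2 \le B_o \sum_{j=m+1}^{n} |c_{k_j}|^2 .
\end{align*}
The right side tends to $0$ as $m,n\to\infty$ because $\{c_k\}$ is square summable. So the partial sums are Cauchy, and by completeness of $\mathcal{H}$ the series converges.

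The same argument works for any rearrangement, since every tail of an $\ell^2$ sequence is small. Hence the convergence is unconditional and the sum does not depend on the enumeration. Define $T\{c_k\}=\sum_k c_k f_k$. $T$ is linear, and passing to the limit in the finite estimate gives $\|T c\|^2 \le B_o\|c\|^2$, so $\|T\|\le \sqrt{B_o}$.

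\textbf{Step 2 (Bessel bound).} Fix $f\in\mathcal{H}$. By continuity of the inner product and convergence of the series, $\langle Tc, f\rangle = \sum_k c_k \langle f_k, f\rangle$ for every $c\in\ell^2$. Hence $T^*f = \{\langle f, f_k\rangle\}_k$; more precisely, one must check that this sequence lies in $\ell^2$, and this is exactly the point needing care.

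To do so, choose a finite set $F\subset\mathbb{I}$ and take $c_k=\langle f, f_k\rangle$ for $k\in F$ and $c_k=0$ otherwise. Then
\begin{align*}
\sum_{k\in F}|\langle f,f_k\rangle|^2 = \Big\langle f, \sum_{k\in F} c_k f_k\Big\rangle \le \|f\|\,\sqrt{B_o}\Big(\sum_{k\in F}|\langle f,f_k\rangle|^2\Big)^{1/2}.
\end{align*}
Dividing through yields $\sum_{k\in F}|\langle f,f_k\rangle|^2\le B_o\|f\|^2$. Taking the supremum over finite $F$ gives the Bessel inequality with bound $B_o$.

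The main obstacle is this last step. We must avoid presupposing that the coefficient sequence is in $\ell^2$, and working with finite truncations resolves it. The only other subtlety is unconditional convergence over a general countable index set, which is handled in Step 1.
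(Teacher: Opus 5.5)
Your proof is correct, and it is the standard argument behind this cited result; the paper gives no proof of its own and defers to Heil. In that argument the finite estimate makes the partial sums Cauchy, and the Bessel bound then follows by duality. One small remark: your Step~2 uses only the finite estimate, so the Bessel conclusion does not depend on Step~1. Also, the concern about $T^*f\in\ell^2(\mathbb{I})$ is not a genuine obstacle. Once $T$ is bounded, its adjoint maps into $\ell^2(\mathbb{I})$ automatically, with coordinates $\langle T^*f,\delta_k\rangle=\langle f,T\delta_k\rangle=\langle f,f_k\rangle$ (where $\delta_k$ is the $k$-th unit vector). This gives $\sum_k|\langle f,f_k\rangle|^2=\|T^*f\|^2\le B_o\|f\|^2$ directly.
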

	
	Let $\mathcal{H}_1$ and $\mathcal{H}_2$ be separable Hilbert spaces and let $\Theta: \mathcal{H}_1 \rightarrow \mathcal{H}_2$ be a bounded linear operator with closed range Ran$\Theta$. Then there exists a bounded linear operator $\Theta^{\dagger}: \mathcal{H}_2 \rightarrow \mathcal{H}_1$ for which $\Theta \Theta^{\dagger}(g) = g$ for all $g \in \text{Ran}\Theta$. The operator $\Theta^{\dagger}$ is called the \emph{pseudo-inverse} of $\Theta$, see \cite[Sec. 2.5]{Ole Christ.}. The following result gives the pseudo-inverse of the synthesis operator.
	\begin{theorem} $\cite[\, p. 131]{Ole Christ.}$ \label{P2_Th2.6}
		Let $\{f_k\}_{k \in \mathbb{I}}$ be a frame for $\mathcal{H}$ with synthesis operator $	T $ and  frame operator $ S $. Then, for every $f \in \mathcal{H}$, we have
		\begin{align*}
			T^{\dagger} f = \{ \langle f, S^{-1} f_k \rangle \}_{k \in \mathbb{I}}.
		\end{align*}
	\end{theorem}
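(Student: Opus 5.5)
The statement is Theorem \ref{P2_Th2.6}: for a frame $\{f_k\}_{k \in \mathbb{I}}$ with synthesis operator $T$ and frame operator $S$, one has $T^{\dagger} f = \{\langle f, S^{-1} f_k\rangle\}_{k \in \mathbb{I}}$. I would take $T^{\dagger}$ to mean the Moore--Penrose pseudo-inverse. This is the unique bounded operator with $TT^{\dagger}g=g$ on $\operatorname{Ran}T$, with $\ker T^{\dagger} = (\operatorname{Ran}T)^{\perp}$, and with $\operatorname{Ran}T^{\dagger} = (\ker T)^{\perp}$. Equivalently, $T^{\dagger}f$ is the minimal-norm solution $c$ of $Tc = f$. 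The plan is to check that the operator $\Phi f := \{\langle f, S^{-1}f_k\rangle\}_{k \in \mathbb{I}}$ has these defining properties.

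First, since $\{f_k\}$ is a frame, $S = TT^*$ is bounded and invertible on $\mathcal{H}$. Also $T$ is bounded by Theorem \ref{P2_Th2.4}, and it is surjective because $TT^*$ is onto. Hence $\operatorname{Ran}T = \mathcal{H}$ is closed and $T^{\dagger}$ exists.

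Next, observe that $\Phi f = T^* S^{-1} f$, because $(T^*S^{-1}f)_k = \langle S^{-1}f, f_k\rangle = \langle f, S^{-1}f_k\rangle$ by self-adjointness of $S^{-1}$. So $\Phi = T^*(TT^*)^{-1}$, which is bounded. Then
\begin{align*}
T\Phi f = TT^*S^{-1}f = f \quad \text{for all } f \in \mathcal{H},
\end{align*}
so $\Phi$ is a right inverse of $T$ on $\operatorname{Ran}T = \mathcal{H}$. Moreover, $\operatorname{Ran}\Phi \subseteq \operatorname{Ran}T^* \subseteq (\ker T)^{\perp}$. Since $\operatorname{Ran}T = \mathcal{H}$, the condition $\ker T^{\dagger}=(\operatorname{Ran}T)^\perp=\{0\}$ holds trivially.

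The main point is uniqueness. Suppose $c$ satisfies $Tc = f$. Then $c - \Phi f \in \ker T$, while $\Phi f \in (\ker T)^{\perp}$. By Pythagoras, $\|c\|^2 = \|\Phi f\|^2 + \|c - \Phi f\|^2$. So $\Phi f$ is the unique minimal-norm solution, which is exactly $T^{\dagger}f$. Therefore $T^{\dagger} = \Phi$, giving the stated formula.

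The only delicate step is being explicit about which definition of pseudo-inverse is used, since the preliminary definition in the paper only asks for $TT^{\dagger}g=g$ on the range. I would fix the Moore--Penrose convention as in \cite[Sec. 2.5]{Ole Christ.} and argue via the minimal-norm characterization as above.
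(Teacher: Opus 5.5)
Your proposal is correct and takes essentially the same route as the paper. The paper only cites Christensen for this theorem, and in its proofs it simply uses $T^{\dagger}=T^{*}(TT^{*})^{-1}$. You identify $\{\langle f, S^{-1}f_k\rangle\}_{k}$ with $T^{*}S^{-1}f=T^{*}(TT^{*})^{-1}f$, and your orthogonality/minimal-norm argument justifies that this right inverse is the Moore--Penrose pseudo-inverse, which is exactly the standard lemma for surjective $T$.

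Your caveat about the definition is also well taken. The paper's stated condition ($TT^{\dagger}g=g$ on the range) does not determine $T^{\dagger}$ uniquely for a redundant frame. The full Moore--Penrose convention is therefore needed for the formula to hold.
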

	The following inequalities given in the next three results will be used in the sequel.
	\begin{lemma}$\cite{POATH}$\label{P2_L2.5}
		Let $X$ be a Banach space and  $U : X \rightarrow X$ be a linear operator. Suppose that there exist constants $\lambda_{1}$, $\lambda_{2} \in [0,1)$ such that
		\begin{align*}
			\norm{Ux-x} \leq \lambda_{1} \norm{x} + \lambda_{2} \norm{Ux} \, \,\,  \text{for all} \, \,\,  x \in X.
		\end{align*}
		Then $U$ is bounded and invertible. Moreover,
		\begin{align*}
			\frac{1-\lambda_{1}}{1+ \lambda_{2}} \norm{x} \leq \norm{Ux} \leq \frac{1+\lambda_{1}}{1- \lambda_{2}} \norm{x} \quad \text{and} \quad  \frac{1-\lambda_{2}}{1+ \lambda_{1}} \norm{x} \leq \norm{U^{-1}x} \leq \frac{1+\lambda_{2}}{1- \lambda_{1}} \norm{x} \,\,\,  \text{for all} \, \, x \in X.
		\end{align*}
	\end{lemma}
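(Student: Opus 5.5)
We prove Lemma \ref{P2_L2.5} with the triangle inequality, which gives the two-sided estimate for $\norm{Ux}$. We then show $U$ is bijective by a Neumann-series argument applied to a modified operator, and read the bounds for $U^{-1}$ off those for $U$.

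\textbf{Step 1: norm bounds.} For every $x \in X$ we have $\norm{Ux} \leq \norm{x} + \norm{Ux - x} \leq (1+\lambda_1)\norm{x} + \lambda_2\norm{Ux}$. Since $\lambda_2<1$, rearranging gives $\norm{Ux} \leq \frac{1+\lambda_1}{1-\lambda_2}\norm{x}$, so $U$ is bounded. Similarly, $\norm{x} \leq \norm{Ux} + \norm{Ux-x} \leq \lambda_1\norm{x} + (1+\lambda_2)\norm{Ux}$, which gives $\norm{Ux} \geq \frac{1-\lambda_1}{1+\lambda_2}\norm{x}$. Hence $U$ is injective and bounded below, and its range is closed because $X$ is complete.

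\textbf{Step 2: surjectivity (main obstacle).} The hypothesis does not directly give $\norm{I-U}<1$, so we cannot invert $U$ by a Neumann series. Instead we use a homotopy argument. Put $U_t = (1-t)I + tU$ for $t \in [0,1]$. Then
\begin{align*}
\norm{U_t x - x} = t\norm{Ux-x} \leq t\lambda_1\norm{x} + t\lambda_2\norm{Ux}.
\end{align*}
To use Step 1 for $U_t$ we need the right-hand side in terms of $\norm{U_t x}$ rather than $\norm{Ux}$. Since $tUx = U_tx - (1-t)x$, we get $t\lambda_2\norm{Ux} \leq \lambda_2\norm{U_tx} + \lambda_2(1-t)\norm{x}$. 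Therefore
\begin{align*}
\norm{U_tx - x} \leq \bigl(t\lambda_1 + (1-t)\lambda_2\bigr)\norm{x} + \lambda_2\norm{U_tx}.
\end{align*}
The coefficient $t\lambda_1+(1-t)\lambda_2$ is at most $\max\{\lambda_1,\lambda_2\}<1$. Step 1 then gives a lower bound $\norm{U_tx} \geq c\norm{x}$ with the constant $c = \frac{1-\max\{\lambda_1,\lambda_2\}}{1+\lambda_2}>0$, independent of $t$.

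By the standard continuity-method argument, the set of $t$ for which $U_t$ is invertible is open, since invertibility is stable under small perturbations. It is also closed: the uniform bound $\norm{U_t^{-1}} \leq 1/c$ lets invertibility extend to all $s$ with $\abs{s-t}\,\norm{U-I} < c$. This set contains $t=0$, where $U_0=I$. Since $[0,1]$ is connected, it is all of $[0,1]$, so $U=U_1$ is invertible. Concretely, we step along $[0,1]$ with step size less than $c/\norm{U-I}$ and apply a Neumann series $U_s = U_t\bigl(I + (s-t)U_t^{-1}(U-I)\bigr)$ at each step.

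\textbf{Step 3: bounds for $U^{-1}$.} Substitute $x = U^{-1}y$ into the two inequalities of Step 1. This gives
\begin{align*}
\frac{1-\lambda_2}{1+\lambda_1}\norm{y} \leq \norm{U^{-1}y} \leq \frac{1+\lambda_2}{1-\lambda_1}\norm{y} \quad \text{for all } y \in X,
\end{align*}
which completes the proof.
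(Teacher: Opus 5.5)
The paper gives no proof of this lemma; it quotes it from \cite{POATH} and uses it as a black box. There is therefore no in-paper argument to compare yours with, but your proof is correct and complete as written.

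Step 1 is the routine triangle-inequality estimate. It also gives closed range, since $U$ is bounded and bounded below on a complete space.

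In Step 2 the decisive computation is $t\lambda_2\|Ux\| \le \lambda_2\|U_t x\| + \lambda_2(1-t)\|x\|$. It shows that each $U_t=(1-t)I+tU$ satisfies the hypothesis of the lemma with constants $t\lambda_1+(1-t)\lambda_2\le\max\{\lambda_1,\lambda_2\}$ and $\lambda_2$. The resulting lower bound $c=\frac{1-\max\{\lambda_1,\lambda_2\}}{1+\lambda_2}$ is uniform in $t$. That uniformity is what lets a fixed step size below $c/\|U-I\|$ carry invertibility from $U_0=I$ to $U_1=U$ via $U_s=U_t\bigl(I+(s-t)U_t^{-1}(U-I)\bigr)$. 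Set aside the trivial case $U=I$, where that quotient is undefined.

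Step 3 correctly turns the two-sided bound for $U$ into the stated bounds for $U^{-1}$.

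Your remark that a direct Neumann series is unavailable is well founded, and the homotopy is genuinely needed, not just convenient. Take $U(x_1,x_2)=(-x_2,x_1)$ on $\mathbb{C}^2$ with the Euclidean norm. This $U$ is unitary with eigenvalues $\pm i$, so $\|x-Ux\|=\sqrt{2}\,\|x\|$ and $\|Ux\|=\|x\|$. The hypothesis therefore holds with $\lambda_1=\lambda_2=3/4$. Yet for every scalar $\alpha$,
\[
\|I-\alpha U\|=\max\{|1-i\alpha|,|1+i\alpha|\}\ge 1 .
\]
So no rescaling of $U$ brings it within distance $1$ of the identity, and surjectivity really does require a continuity argument like yours (or an equivalent index-stability argument).
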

	
	\begin{theorem}$\cite[p. 204 ]{Math Ineq.}$ \label{Young}
		$[$\textbf{Young's Inequality}$]$
		Let $1<p, q < \infty$ be such that  $\frac{1}{p} + \frac{1}{q}=1$. Then, for all $a$, $b >  0$, we have $ab \leq \frac{a^p}{p} + \frac{b^q}{q} $. As a special case for $p=q=2$, if we replace $a$ by $\left( \sqrt{\epsilon} a \right)$ and $b$ by $\left(\frac{b}{\sqrt{\epsilon}}\right)$ in above inequality, then we get the following $\epsilon$-version of Young’s inequality
		\begin{align*}
			ab \leq \frac{\epsilon}{2}a^{2} + \frac{1}{2 \epsilon} b^{2}, \quad \text{where} \quad \epsilon >0.
		\end{align*}
	\end{theorem}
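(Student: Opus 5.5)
The statement to be proved is Young's inequality $ab \le \frac{a^p}{p}+\frac{b^q}{q}$ for $a,b>0$ and conjugate exponents $1<p,q<\infty$, together with its $\epsilon$-version. I would derive the general inequality from the concavity of the logarithm, and then obtain the $\epsilon$-version by the substitution already indicated in the statement.

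\emph{Step 1: concavity.} Since $\frac{1}{p}+\frac{1}{q}=1$ with $\frac1p,\frac1q\in(0,1)$, the quantity $\frac{1}{p}x+\frac{1}{q}y$ is a convex combination of $x$ and $y$. Apply concavity of $\log$ on $(0,\infty)$ with $x=a^p>0$ and $y=b^q>0$. This gives
\begin{align*}
\log\Big(\frac{a^p}{p}+\frac{b^q}{q}\Big) \ge \frac{1}{p}\log a^p+\frac{1}{q}\log b^q=\log(ab).
\end{align*}
Exponentiating, using that $\exp$ is increasing, yields $ab\le \frac{a^p}{p}+\frac{b^q}{q}$.

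The only point needing justification is the concavity of $\log$. This follows from $(\log t)''=-1/t^2<0$ on $(0,\infty)$. If one prefers to avoid calculus, there is an alternative route. Set $t=a^p/b^q$ and reduce the claim to $t^{1/p}\le \frac{t}{p}+\frac1q$ for $t>0$. Then check that $\varphi(t)=\frac tp+\frac1q-t^{1/p}$ has its minimum value $0$ at $t=1$, since $\varphi'(t)=\frac1p(1-t^{1/p-1})$ changes sign there.

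\emph{Step 2: $\epsilon$-version.} Take $p=q=2$, fix $\epsilon>0$, and apply Step 1 to the positive numbers $\sqrt{\epsilon}\,a$ and $b/\sqrt{\epsilon}$. Their product is $ab$, and the right-hand side becomes $\frac{\epsilon a^2}{2}+\frac{b^2}{2\epsilon}$, which is the claim. (For $p=q=2$ one could equally expand $(\sqrt{\epsilon}a-b/\sqrt{\epsilon})^2\ge 0$.)

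I expect no real obstacle here. The one step carrying content is the concavity or convexity argument in Step 1, and everything else is substitution.
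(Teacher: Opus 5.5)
Your argument is correct. The paper gives no proof of the general inequality; it only cites a reference. Its sole argument is exactly your Step 2: take $p=q=2$ and substitute $\sqrt{\epsilon}\,a$ and $b/\sqrt{\epsilon}$.

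So your Step 1, via concavity of $\log$, is a self-contained supplement to the citation rather than a competing route. The reduction to $t^{1/p}\le \frac{t}{p}+\frac{1}{q}$ with $t=a^p/b^q$ (using $q/p=q-1$) is also fine. It is not ``calculus-free'' as you say, though, since it relies on the sign of $\varphi'$.

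It is also worth noting how the result is used. The paper only ever applies the $p=q=2$, $\epsilon$-version, and it applies it to norms of finite sums that can vanish. Your parenthetical $(\sqrt{\epsilon}\,a-b/\sqrt{\epsilon})^2\ge 0$ is the most economical proof of exactly that case, and it covers all $a,b\ge 0$ directly. The $\log$ argument, by contrast, needs $a,b>0$, although the zero case is trivial anyway.
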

	
	\begin{theorem}$\cite{NCTPWT}$\label{Theorem 2.8}
		For any $x \geq 0$, $y \geq 0$, we have $x^{k}+ y^{k} \leq c (x+y)^{k}$, where
		\begin{align*}
			c=\begin{cases}
				1, & \;\text{if} \;  k \geq 1,\\
				2^{1-k}, & \;  \text{if} \; k \leq 1 .
			\end{cases}
		\end{align*}
	\end{theorem}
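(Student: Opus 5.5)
We reduce by homogeneity and then use convexity or concavity of $t \mapsto t^k$. We assume throughout that $k>0$, which is the range in which the statement is used (in Definition \ref{concls009} the exponent $k$ is a positive real number); for $k\le 0$ the powers of $0$ are not meaningful.

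If $x+y=0$, then $x=y=0$ and both sides vanish, so there is nothing to prove. Otherwise, put $t = \frac{x}{x+y}\in[0,1]$, so that $1-t=\frac{y}{x+y}$. Dividing the desired inequality by $(x+y)^k>0$ reduces it to
\begin{align*}
\varphi(t) := t^k + (1-t)^k \le c \quad \text{for all } t\in[0,1].
\end{align*}

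\textbf{Case $k\ge 1$.} For $s\in[0,1]$ we have $s^k\le s$. Hence $\varphi(t)\le t+(1-t)=1=c$, and this settles the case.

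\textbf{Case $0<k\le 1$.} Here the function $s\mapsto s^k$ is concave on $[0,\infty)$. By the midpoint form of Jensen's inequality,
\begin{align*}
\frac{t^k+(1-t)^k}{2} \le \left(\frac{t+(1-t)}{2}\right)^k = 2^{-k}.
\end{align*}
Therefore $\varphi(t)\le 2^{1-k}=c$. Equality holds at $t=\tfrac12$, so this constant is sharp.

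The case $k=1$ lies in both cases, and both give $c=1$, so the two definitions of $c$ agree there. Undoing the normalization, that is, multiplying back by $(x+y)^k$, gives $x^k+y^k\le c\,(x+y)^k$.

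The only point needing care is justifying concavity when $k<1$. We would do this via $\frac{d^2}{ds^2}s^k = k(k-1)s^{k-2}\le 0$ for $s>0$, together with continuity of $s\mapsto s^k$ at $0$, which extends concavity to the closed interval $[0,\infty)$. Everything else is routine.
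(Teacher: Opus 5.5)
Your proof is correct and complete. The paper gives no proof of this inequality; it simply quotes it from Hilding, so there is no argument to compare against. Your approach is the standard self-contained proof: reduce by homogeneity to $\varphi(t)=t^k+(1-t)^k\le c$ on $[0,1]$, then use $s^k\le s$ for $k\ge 1$ and midpoint concavity for $0<k\le 1$. It also shows that both constants are sharp.

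Your opening remark can be sharpened. Excluding $k\le 0$ is not only a matter of $0^k$ being undefined. For $k<0$, $\varphi''(t)=k(k-1)\bigl(t^{k-2}+(1-t)^{k-2}\bigr)>0$ on $(0,1)$. So $\varphi$ is convex with minimum $2^{1-k}$ at $t=\tfrac12$, and the inequality actually reverses. For example, with $x=1$ and $y\to 0^{+}$, the left side $1+y^{k}$ blows up while the right side tends to $2^{1-k}$. The hypothesis $k>0$ is therefore genuinely needed, and the case ``$k\le 1$'' in the statement must be read as $0<k\le 1$. This matches how the paper uses the result, with $k$ a positive real number.
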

	
	\section{Dynamical Samples using Nagy-Type  Perturbation of Frames} \label{P2_sec3}
	 We begin the section with the  stability of dynamical sampling systems under perturbation of the sampling vector. More precisely, in the following result we show that stable recovery of the initial state from the samples ${\langle A^{n}f,g\rangle}_{n\geq 0}$ is preserved when the sampling vector $g$ is replaced by another vector $h$ satisfying a Nagy-type frame perturbation condition.
	
	\begin{theorem} \label{P2_Th3.1}
		Let $g,h \in \mathcal{H}$ and let $A$ be a bounded linear operator acting on  $\mathcal{H}$. Suppose that
		\begin{enumerate}[label=(\roman*)]
			\item 	any $f \in \mathcal{H}$ is recovered from the samples $\{\langle A^{n}f,g\rangle\}_{n \geq 0}$ in a stable way and there exist real numbers $\lambda$, $\mu$, $\nu  \geq 0$ such that
			\begin{align}
				\notag
				\norm{\sum_{n=0}^{l}a_{n}(A^{*})^{n}(g-h)}^{2} & \leq  \lambda \norm{\sum_{n=0}^{l}a_{n}(A^{*})^{n}g}^{2} + \mu \norm{\sum_{n=0}^{l}a_{n}(A^{*})^{n}g} \, \norm{\sum_{n=0}^{l}a_{n}(A^{*})^{n}h}\\
				& \quad + \nu \norm{\sum_{n=0}^{l}a_{n}(A^{*})^{n}h}^{2}, \;\; l \in \Z^{+},
				\label{P2_S1_1}
			\end{align}
			for all finite scalar sequences $\{a_n\}_{n=0}^{\infty}$.
			\label{P2_S1_C1}
			\item $2(1-\nu)\epsilon^{2}- (\mu +2 \nu)\epsilon-\mu >0$ and  $\max{\{\lambda+ \frac{\mu \epsilon}{2}, \frac{\mu}{2 \epsilon}+ \nu\}}<1$, where $\epsilon >0$.
			\label{seccon11}
		\end{enumerate}
		Then, any $f \in \mathcal{H}$ can also be recovered from the samples $\{\langle A^{n}f,h\rangle\}_{n \geq 0}$ in a stable way.
	\end{theorem}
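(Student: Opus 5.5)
By Proposition \ref{P2_P2.3}, hypothesis \ref{P2_S1_C1} says that $\{(A^{*})^{n}g\}_{n\geq 0}$ is a frame for $\mathcal{H}$, with bounds $A_o, B_o$ say. The conclusion says that $\{(A^{*})^{n}h\}_{n\geq 0}$ is a frame. So the plan is to prove this frame property for the $h$-orbit and then invoke Proposition \ref{P2_P2.3} once more. For a finite sequence $\{a_n\}_{n=0}^{l}$ write
\begin{align*}
x=\sum_{n=0}^{l}a_n(A^{*})^{n}g, \qquad y=\sum_{n=0}^{l}a_n(A^{*})^{n}h .
\end{align*}
Then \eqref{P2_S1_1} reads $\|x-y\|^2\le \lambda\|x\|^2+\mu\|x\|\|y\|+\nu\|y\|^2$.

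\textbf{Step 1: compare $\|y\|$ with $\|x\|$.} Apply the $\epsilon$-version of Young's inequality (Theorem \ref{Young}) to the cross term, $\mu\|x\|\|y\|\le \frac{\mu\epsilon}{2}\|x\|^2+\frac{\mu}{2\epsilon}\|y\|^2$. This gives
\begin{align*}
\|x-y\|^2\le \Big(\lambda+\tfrac{\mu\epsilon}{2}\Big)\|x\|^2+\Big(\tfrac{\mu}{2\epsilon}+\nu\Big)\|y\|^2 .
\end{align*}
Set $\alpha=\lambda+\frac{\mu\epsilon}{2}$ and $\beta=\frac{\mu}{2\epsilon}+\nu$; both are $<1$ by \ref{seccon11}. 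Using $\sqrt{s+t}\le\sqrt s+\sqrt t$ we get $\|x-y\|\le\sqrt\alpha\|x\|+\sqrt\beta\|y\|$, with $\sqrt\alpha,\sqrt\beta\in[0,1)$. The triangle inequality then yields the two-sided estimate
\begin{align*}
\frac{1-\sqrt\alpha}{1+\sqrt\beta}\|x\|\le\|y\|\le\frac{1+\sqrt\alpha}{1-\sqrt\beta}\|x\| .
\end{align*}
This is the mechanism of Lemma \ref{P2_L2.5}.

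\textbf{Step 2: upper frame bound.} Since $\{(A^{*})^{n}g\}$ is Bessel with bound $B_o$, we have $\|x\|^2\le B_o\sum|a_n|^2$. Step 1 then gives $\|y\|^2\le \big(\frac{1+\sqrt\alpha}{1-\sqrt\beta}\big)^2B_o\sum|a_n|^2$ for all finite sequences. By Theorem \ref{P2_Th2.5}, $\{(A^{*})^{n}h\}$ is a Bessel sequence, and its synthesis operator $T_h$ is bounded by Theorem \ref{P2_Th2.4}. By density and continuity, the inequality of Step 1 extends to every $\{a_n\}\in\ell^2(\Z^+)$. In operator form, with $T_g$ the synthesis operator of the $g$-orbit,
\begin{align*}
\|(T_g-T_h)c\|\le\sqrt\alpha\|T_gc\|+\sqrt\beta\|T_hc\| \quad \text{for all } c\in\ell^2(\Z^+).
\end{align*}

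\textbf{Step 3: lower frame bound, the main obstacle.} The inequality above lives on the coefficient side, whereas a lower frame bound concerns $T_h^*$. A standard route, following Casazza--Christensen \cite{POATH}, is the following.
\begin{itemize}
\item Take $c=T_g^{\dagger}f$ from Theorem \ref{P2_Th2.6}, so that $T_gc=f$ and $\|c\|\le A_o^{-1/2}\|f\|$.
\item Define $U\colon\mathcal{H}\to\mathcal{H}$ by $Uf=T_hT_g^{\dagger}f$. This is linear and bounded.
\item Step 2 with this $c$ gives $\|f-Uf\|\le\sqrt\alpha\|f\|+\sqrt\beta\|Uf\|$.
\item Lemma \ref{P2_L2.5} now shows that $U$ is invertible with $\|U^{-1}\|\le\frac{1+\sqrt\beta}{1-\sqrt\alpha}$.
\end{itemize}
Consequently every $f\in\mathcal{H}$ can be written as $f=T_h\big(T_g^{\dagger}U^{-1}f\big)$. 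Hence $T_h$ is surjective, with a preimage of norm at most $A_o^{-1/2}\frac{1+\sqrt\beta}{1-\sqrt\alpha}\|f\|$. A bounded surjective synthesis operator with this control on preimages gives the lower bound
\begin{align*}
\sum_n|\langle f,(A^{*})^nh\rangle|^2\ge A_o\Big(\frac{1-\sqrt\alpha}{1+\sqrt\beta}\Big)^2\|f\|^2 .
\end{align*}
To see this, write $\|f\|^2=\langle T_hc',f\rangle=\langle c',T_h^*f\rangle$ for the chosen preimage $c'$ and apply Cauchy--Schwarz.

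Step 3 is the delicate one. I expect the first inequality in \ref{seccon11}, namely $2(1-\nu)\epsilon^2-(\mu+2\nu)\epsilon-\mu>0$, to enter either in the authors' version of this constant or in avoiding the square-root splitting. If the square-root route above needs modification, I would instead square and solve the quadratic inequality in $\|y\|/\|x\|$ directly, and check that this positivity condition is exactly what keeps the resulting lower constant positive.
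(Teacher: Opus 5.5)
Your proof is correct. Its lower-bound half (Step 3) is the paper's Step (II): the paper also forms $T_hT_g^{\dagger}$ and derives $\|f-T_hT_g^{\dagger}f\|\le\sqrt{\alpha}\,\|f\|+\sqrt{\beta}\,\|T_hT_g^{\dagger}f\|$, where $\alpha=\lambda+\frac{\mu\epsilon}{2}$ and $\beta=\frac{\mu}{2\epsilon}+\nu$. It then applies Lemma~\ref{P2_L2.5} and finishes with Cauchy--Schwarz, reaching the same constant $A_o\big(\frac{1-\sqrt{\alpha}}{1+\sqrt{\beta}}\big)^2$.

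The real difference is the Bessel step. The paper proceeds as follows:
\begin{itemize}
\item it squares $\|y\|\le\|x\|+\|x-y\|$;
\item it bounds $2\|x\|\,\|x-y\|\le\epsilon\|x\|^2+\frac{1}{\epsilon}\|x-y\|^2$, using the same $\epsilon$;
\item it re-inserts the hypothesis into $\frac{1}{\epsilon}\|x-y\|^2$ and applies Young once more.
\end{itemize}
The resulting coefficient of $\|y\|^2$ is $1-\beta-\frac{\beta}{\epsilon}$. Requiring it to be positive is exactly the first inequality of hypothesis (ii), which is equivalent to $\beta<\frac{\epsilon}{1+\epsilon}$. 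This is the only place that inequality is used.

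You instead apply Young to the cross term first, then use $\sqrt{s+t}\le\sqrt{s}+\sqrt{t}$, then the triangle inequality. This needs only $\beta<1$. It is shorter and gives the cleaner Bessel bound $\big(\frac{1+\sqrt{\alpha}}{1-\sqrt{\beta}}\big)^2B_o$. It also shows that the first inequality in (ii) is superfluous, since $\beta<\frac{\epsilon}{1+\epsilon}$ is strictly stronger than $\beta<1$. So your square-root splitting needs no repair, and you can drop the speculative closing paragraph.
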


	\begin{proof}
		In view of  Proposition \ref{P2_P2.3}, it is sufficient to prove that  $\{(A^{*})^{n}h\}_{n \geq 0}$ is a frame of the space $\mathcal{H}$.
		We prove this in the following two steps:
		
		\textbf{Step (I):} In this step, we show that $\{(A^{*})^{n}h\}_{n \geq 0}$  is a Bessel sequence.

		By hypothesis \ref{P2_S1_C1}, every  $f \in \mathcal{H}$ is  recovered from the samples $\{\langle A^{n}f,g\rangle\}_{n \geq 0}$ in a stable way. Therefore, by Proposition \ref{P2_P2.3}, $\{(A^{*})^{n}g\}_{n \geq 0}$ is a frame for $\mathcal{H}$. Let $T_g$ be the pre-frame operator of $\{(A^{*})^{n}g\}_{n \geq 0}$.
		For $h \in \mathcal{H}$, consider the following:
		\begin{align*}
			\norm{\sum_{n=0}^{l}a_{n}(A^{*})^{n}h} & \leq
			\norm{\sum_{n=0}^{l}a_{n}(A^{*})^{n}g}+\norm{\sum_{n=0}^{l}a_{n}(A^{*})^{n}(g-h)}.
		\end{align*}
		Squaring on both side and using inequality \eqref{P2_S1_1}, we get
		\begin{align}\label{fmeq320}
			\norm{\sum_{n=0}^{l}a_{n}(A^{*})^{n}h}^{2}   & \leq
			\norm{\sum_{n=0}^{l}a_{n}(A^{*})^{n}g}^{2} + 2
			\norm{\sum_{n=0}^{l}a_{n}(A^{*})^{n}g} \, \norm{\sum_{n=0}^{l}a_{n}(A^{*})^{n}(g-h)}\notag\\
			& \quad +\norm{\sum_{n=0}^{l}a_{n}(A^{*})^{n}(g-h)}^{2}\notag\\
			& \leq
			\norm{\sum_{n=0}^{l}a_{n}(A^{*})^{n}g}^{2} + 2
			\norm{\sum_{n=0}^{l}a_{n}(A^{*})^{n}g} \, \norm{\sum_{n=0}^{l}a_{n}(A^{*})^{n}(g-h)} \notag\\
			& \quad + \lambda
			\norm{\sum_{n=0}^{l}a_{n}(A^{*})^{n}g}^{2}
			+ \mu
			\norm{\sum_{n=0}^{l}a_{n}(A^{*})^{n}g} \,
			\norm{\sum_{n=0}^{l}a_{n}(A^{*})^{n}h} \notag\\
			& \quad + \nu
			\norm{\sum_{n=0}^{l}a_{n}(A^{*})^{n}h}^{2}\notag\\
			& \leq  (1+ \lambda)
			\norm{\sum_{n=0}^{l}a_{n}(A^{*})^{n}g}^{2} + 2
			\norm{\sum_{n=0}^{l}a_{n}(A^{*})^{n}g} \, \norm{\sum_{n=0}^{l}a_{n}(A^{*})^{n}(g-h)}\notag\\
			& \quad + \mu
			\norm{\sum_{n=0}^{l}a_{n}(A^{*})^{n}g} \,
			\norm{\sum_{n=0}^{l}a_{n}(A^{*})^{n}h}+ \nu
			\norm{\sum_{n=0}^{l}a_{n}(A^{*})^{n}h}^{2}.
		\end{align}
		By using Young's Inequality given in Theorem \ref{Young}, we have
		\begin{align*}
			\mu
			\norm{\sum_{n=0}^{l}a_{n}(A^{*})^{n}g} \,
			\norm{\sum_{n=0}^{l}a_{n}(A^{*})^{n}h}  & \leq  \frac{\mu \epsilon}{2}
			\norm{\sum_{n=0}^{l}a_{n}(A^{*})^{n}g}^{2} + \frac{\mu}{2 \epsilon}
			\norm{\sum_{n=0}^{l}a_{n}(A^{*})^{n}h}^{2},
			\intertext{and}
			2
			\norm{\sum_{n=0}^{l}a_{n}(A^{*})^{n}g} \, \norm{\sum_{n=0}^{l}a_{n}(A^{*})^{n}(g-h)} & \leq \epsilon
			\norm{\sum_{n=0}^{l}a_{n}(A^{*})^{n}g}^{2} + \frac{1}{\epsilon}\norm{\sum_{n=0}^{l}a_{n}(A^{*})^{n}(g-h)}^{2}.
		\end{align*}
		Using above two inequalities in \eqref{fmeq320}, we obtain
		\begin{align*}
			(1- \nu)
			\norm{\sum_{n=0}^{l}a_{n}(A^{*})^{n}h}^{2}   & \leq  (1+ \lambda)
			\norm{\sum_{n=0}^{l}a_{n}(A^{*})^{n}g}^{2} +\epsilon
			\norm{\sum_{n=0}^{l}a_{n}(A^{*})^{n}g}^{2} \\
			& \quad + \frac{1}{\epsilon}\norm{\sum_{n=0}^{l}a_{n}(A^{*})^{n}(g-h)}^{2}
			+ \frac{\mu \epsilon}{2}
			\norm{\sum_{n=0}^{l}a_{n}(A^{*})^{n}g}^{2}\\
			& \quad + \frac{\mu}{2 \epsilon}
			\norm{\sum_{n=0}^{l}a_{n}(A^{*})^{n}h}^{2}.
		\end{align*}
		Again using inequality \eqref{P2_S1_1}, we get
		\begin{align*}
			\left(1- \nu-\frac{\mu}{2 \epsilon}\right)
			\norm{\sum_{n=0}^{l}a_{n}(A^{*})^{n}h}^{2} & \leq  \left(1+ \lambda+\epsilon+\frac{\mu \epsilon}{2}\right)
			\norm{\sum_{n=0}^{l}a_{n}(A^{*})^{n}g}^{2} + \frac{\lambda}{\epsilon}
			\norm{\sum_{n=0}^{l}a_{n}(A^{*})^{n}g}^{2} \\
			& \quad +	\frac{\mu}{\epsilon}
			\norm{\sum_{n=0}^{l}a_{n}(A^{*})^{n}g} \,
			\norm{\sum_{n=0}^{l}a_{n}(A^{*})^{n}h} + \frac{\nu}{\epsilon}
			\norm{\sum_{n=0}^{l}a_{n}(A^{*})^{n}h}^{2}.
		\end{align*}
		This gives
		\begin{align*}
			\left(1- \nu-\frac{\mu}{2 \epsilon}-\frac{\nu}{\epsilon}\right)
			\norm{\sum_{n=0}^{l}a_{n}(A^{*})^{n}h}^{2} & \leq  \left(1+ \lambda+\epsilon+\frac{\mu \epsilon}{2}+\frac{\lambda}{\epsilon}\right)
			\norm{\sum_{n=0}^{l}a_{n}(A^{*})^{n}g}^{2} \\
			& \quad +  \frac{\mu}{\epsilon}\left(\frac{\epsilon}{2}
			\norm{\sum_{n=0}^{l}a_{n}(A^{*})^{n}g}^{2}+\frac{1}{2 \epsilon}
			\norm{\sum_{n=0}^{l}a_{n}(A^{*})^{n}h}^{2}\right).
		\end{align*}
		$\big($using Young's Inequality given in Theorem \ref{Young}$\big)$.
		This implies that
		\begin{align*}
			\left(1- \nu-\frac{\mu}{2 \epsilon}-\frac{\nu}{\epsilon}-\frac{\mu}{2 \epsilon^{2}}\right)
			\norm{\sum_{n=0}^{l}a_{n}(A^{*})^{n}h}^{2} & \leq   \left(1+ \lambda+\epsilon+\frac{\mu \epsilon}{2}+\frac{\lambda}{\epsilon}+\frac{\mu}{2}\right)
			\norm{\sum_{n=0}^{l}a_{n}(A^{*})^{n}g}^{2}.
		\end{align*}
		Since $2(1-\nu)\epsilon^{2}- (\mu +2 \nu)\epsilon-\mu >0$, above inequality becomes
		\begin{align*}
			\norm{\sum_{n=0}^{l}a_{n}(A^{*})^{n}h}^{2} \leq \left(\frac{(\mu+2) \epsilon^{3} +[2(1+ \lambda)+ \mu] \epsilon^{2}+2 \lambda \epsilon}{2(1-\nu)\epsilon^{2}- (\mu +2 \nu)\epsilon-\mu }\right)
			\norm{\sum_{n=0}^{l}a_{n}(A^{*})^{n}g}^{2}
		\end{align*}
		for all finite scalar sequences $\{a_n\}_{n=0}^{\infty}$. Thus, by Theorem \ref{P2_Th2.5}, $\{(A^{*})^{n}h\}_{n \geq 0}$  is a Bessel sequence, and its pre-frame operator  $T_{h}:\ell^2(\Z^{+}) \rightarrow \mathcal{H}$ given by
		$T_{h}\{a_{n}\}_{n = 0}^{\infty}=\sum_{n = 0}^{\infty}a_{n}(A^{*})^{n}h$,
		is well-defined and bounded.
		
		\textbf{Step (II):}
		In this step, we prove the lower frame condition for $\{(A^{*})^{n}h\}_{n \geq 0}$. Let $T^{\dagger}_{g}:\mathcal{H} \rightarrow \ell^2(\Z^{+})$ be the pseudo-inverse of  $T_{g}$. Then, for any $f \in \mathcal{H}$, we have
		\begin{align*}
			T^{\dagger}_{g}f :=T^{*}_{g}(T_{g}T^{*}_{g})^{-1}f = \{ \langle f, (T_{g}T^{*}_{g})^{-1}(A^{*})^{n}g \rangle \}_{n=0}^{\infty} \ \big(\text{by Theorem \ref{P2_Th2.6}}).
		\end{align*}
		Note that if inequality \eqref{P2_S1_1} holds for any finite sequence of scalars, then it holds for any  sequence $\{a_n\}_{n=0}^{\infty} \in \ell^2(\mathbb{Z}^{+})$.  By invoking \eqref{P2_S1_1} for $\{a_{n}\}_{n=0}^{\infty} = 	T^{\dagger}_{g}f$, we get
		\begin{align*}
			&\norm{\sum_{n=0}^{\infty}\langle f, (T_{g}T^{*}_{g})^{-1}(A^{*})^{n}g \rangle (A^{*})^{n}(g-h)}^{2}\\
			& \leq \lambda \norm{\sum_{n=0}^{\infty}\langle f, (T_{g}T^{*}_{g})^{-1}(A^{*})^{n}g \rangle (A^{*})^{n}g}^{2}  \\
			& \quad +  \mu \norm{\sum_{n=0}^{\infty}\langle f, (T_{g}T^{*}_{g})^{-1}(A^{*})^{n}g \rangle (A^{*})^{n}g} \, \norm{\sum_{n=0}^{\infty}\langle f, (T_{g}T^{*}_{g})^{-1}(A^{*})^{n}g \rangle (A^{*})^{n}h}\\
			&	 \quad + \nu \norm{\sum_{n=0}^{\infty}\langle f, (T_{g}T^{*}_{g})^{-1}(A^{*})^{n}g \rangle (A^{*})^{n}h}^{2}.
		\end{align*}
		That is,
		\begin{align*}
			\norm{f-T_{h}T^{\dagger}_{g}f}^{2} &\leq \lambda \norm{f}^{2} + \mu \norm{f} \, \norm{T_{h}T^{\dagger}_{g}f} + \nu \norm{T_{h}T^{\dagger}_{g}f}^{2}\\
			& \leq \lambda \norm{f}^{2} + \frac{\mu \epsilon}{2}\norm{f}^{2} + \frac{\mu}{2 \epsilon} \norm{T_{h}T^{\dagger}_{g}f}^{2}  + \nu \norm{T_{h}T^{\dagger}_{g}f}^{2} \ \big(\text{using Theorem \ref{Young}}\big)\\
			& \leq \left(\lambda + \frac{\mu \epsilon}{2}\right)\norm{f}^{2} + \left(\frac{\mu}{2 \epsilon}+ \nu\right)\norm{T_{h}T^{\dagger}_{g}f}^{2} \\
			& \leq \left(\sqrt{\lambda + \frac{\mu \epsilon}{2}} \, \norm{f} + \sqrt{\frac{\mu}{2 \epsilon}+ \nu} \, \norm{T_{h}T^{\dagger}_{g}f} \right)^{2} \ \text{for all} \ f \in \mathcal{H}.
		\end{align*}
		This implies that $\norm{f-T_{h}T^{\dagger}_{g}f} \leq \sqrt{\lambda + \frac{\mu \epsilon}{2}} \, \norm{f} + \sqrt{\frac{\mu}{2 \epsilon}+ \nu} \, \norm{T_{h}T^{\dagger}_{g}f} \ \text{for all} \ f \in \mathcal{H}$.
		As $\max{\{\lambda+ \frac{\mu \epsilon}{2}, \frac{\mu}{2 \epsilon}+ \nu\}}<1$, by  Lemma \ref{P2_L2.5}, $T_{h}T^{\dagger}_{g}$ is invertible, and
		$
			\norm{(T_{h}T^{\dagger}_{g})^{-1}} \leq \frac{1+\sqrt{\frac{\mu}{2 \epsilon}+ \nu}}{1- \sqrt{\lambda + \frac{\mu \epsilon}{2}}}.
		$
		Now, every $ f \in \mathcal{H}$ can be written as follows:
		\begin{align*}
			f=T_{h}T^{\dagger}_{g}(T_{h}T^{\dagger}_{g})^{-1}f = \sum_{n=0}^{\infty} \langle (T_{h}T^{\dagger}_{g})^{-1}f, (T_{g}T^{*}_{g})^{-1}(A^{*})^{n}g \rangle (A^{*})^{n}h.
		\end{align*}
		Therefore, for every $ f \in \mathcal{H}$, we have
		\begin{align*}
			\norm{f}^{4} = {\langle f,f \rangle}^{2} & = \abs{\sum_{n=0}^{\infty} \langle (T_{h}T^{\dagger}_{g})^{-1}f, (T_{g}T^{*}_{g})^{-1}(A^{*})^{n}g \rangle  \langle (A^{*})^{n}h, f \rangle}^{2} \\
			& \leq \sum_{n=0}^{\infty} \abs{\langle (T_{h}T^{\dagger}_{g})^{-1}f, (T_{g}T^{*}_{g})^{-1}(A^{*})^{n}g \rangle }^{2}
			\sum_{n=0}^{\infty} \abs{\langle (A^{*})^{n}h,f \rangle }^{2} \\
			& \leq \frac{1}{\alpha_{g}}  \norm{(T_{h}T^{\dagger}_{g})^{-1}f}^{2} \sum_{n=0}^{\infty} \abs{\langle (A^{*})^{n}h,f \rangle }^{2} \\
			& \leq \frac{1}{\alpha_{g}}\left(\frac{1+\sqrt{\frac{\mu}{2 \epsilon}+ \nu}}{1- \sqrt{\lambda + \frac{\mu \epsilon}{2}}}\right)^{2} \norm{f}^{2} \sum_{n=0}^{\infty} \abs{\langle (A^{*})^{n}h,f \rangle }^{2}.
		\end{align*}
		This implies that
		\begin{align*}
			\sum_{n=0}^{\infty} \abs{\langle (A^{*})^{n}h,f \rangle }^{2} \geq
			\alpha_{g} \left(\frac{1- \sqrt{\lambda + \frac{\mu \epsilon}{2}}}{1+\sqrt{\frac{\mu}{2 \epsilon}+ \nu}}\right)^{2} \norm{f}^{2} \,\, \text{for all} \,\, f \in \mathcal{H},
		\end{align*}
		where $\alpha_{g}$ is the lower frame bound of $\{(A^{*})^{n}h\}_{n \geq 0}$.
		This gives the lower frame condition for $\{(A^{*})^{n}h\}_{n \geq 0}$.  Hence, by Proposition \ref{P2_P2.3}, every $f \in \mathcal{H}$ can  be recovered from the samples $\{\langle A^{n}f,h\rangle\}_{n \geq 0}$ in a stable way.
	\end{proof}
	
	The following example illustrates  Theorem \ref{P2_Th3.1}.
	
	\begin{example}\label{P2_Exp3.2}
		Let $\mathcal{H}=\ell^2(\Z^{+})$. Consider the left shift operator $A: \ell^2(\Z^{+}) \rightarrow \ell^2(\Z^{+})$ defined by $A(x_0, x_1,x_2, \ldots)=(x_1,x_2,x_3, \ldots)$. Then, its Hilbert-adjoint operator $A^{*}:\ell^2(\Z^{+}) \rightarrow \ell^2(\Z^{+})$ is
		the right shift operator. That is, $A^{*}(x_0, x_1,x_2, \ldots)=(0,x_0, x_1,x_2, \ldots)$, or $A^{*}e_{n}=e_{n+1} \;\; \text{for all} \;\; n \geq 0$. Let $g=e_{0}$, $h=2e_{0}$. Then,  $\{(A^{*})^{n}g\}_{n \geq 0}=\{e_{n}\}_{n \geq 0}$ which is an orthonormal basis of the space $\ell^2(\Z^{+})$. Hence, every $f \in \mathcal{H}$ is stably recovered from the samples $\{\langle A^{n}f,g\rangle\}_{n \geq 0}$. Now, $g-h=-e_{0}$. Thus, $\{(A^{*})^{n}(g-h)\}_{n \geq 0}=\{-e_{n}\}_{n \geq 0}$ and $\{(A^{*})^{n}h\}_{n \geq 0}=\{2e_{n}\}_{n \geq 0}$. Also, for any finite scalar sequence $\{a_n\}_{n=0}^{\infty}$, we have
		\begin{align*}
			& \norm{\sum_{n=0}^{l}a_{n}(A^{*})^{n}g}^{2} =\norm{\sum_{n=0}^{l}a_{n}e_{n}}^{2} =\sum_{n=0}^{l}\abs{a_n}^{2}; \\
			& \norm{\sum_{n=0}^{l}a_{n}(A^{*})^{n}(g-h)}^{2}  =\norm{\sum_{n=0}^{l}a_{n}(-e_{n})}^{2} =\sum_{n=0}^{l}\abs{a_n}^{2};
			\intertext{and}
			& \norm{\sum_{n=0}^{l}a_{n}(A^{*})^{n}h}^{2} =\norm{\sum_{n=0}^{l}a_{n}(2e_{n})}^{2} =4\sum_{n=0}^{l}\abs{a_n}^{2}.
		\end{align*}
		Choose $\lambda=0.4$, $\mu=0.1$, $\nu=0.1$ and using above three equations in condition \ref{P2_S1_C1} of  Theorem \ref{P2_Th3.1}, we have
		\begin{align*}
			\norm{\sum_{n=0}^{l}a_{n}(A^{*})^{n}(g-h)}^{2} & =\sum_{n=0}^{l}\abs{a_n}^{2} \\
			& \leq  (0.4) \sum_{n=0}^{l}\abs{a_n}^{2}  +(0.1) \left(\sum_{n=0}^{l}\abs{a_n}^{2} \right)^{\frac{1}{2}} \times 2  \left(\sum_{n=0}^{l}\abs{a_n}^{2} \right)^{\frac{1}{2}} \\
			& \quad + (0.1) \, 4 \sum_{n=0}^{l}\abs{a_n}^{2}.
		\end{align*}
		It is easy to see that for any $\epsilon \in (\frac{1}{3}, 12)$, the quantity  $2(1-\nu)\epsilon^{2}- (\mu +2 \nu)\epsilon-\mu >0$ and  $\max{\{\lambda+ \frac{\mu \epsilon}{2}, \frac{\mu}{2 \epsilon}+ \nu\}}<1$. Hence, by Theorem \ref{P2_Th3.1}, any $f \in \mathcal{H}$ can be stably recovered from the samples $\{\langle A^{n}f,h\rangle\}_{n \geq 0}$.
	\end{example}
	
	\begin{remark}
		If we replace first term in condition \ref{seccon11} of Theorem \ref{P2_Th3.1} by $(1-2\nu) \epsilon-\mu >0$, that is, a linear condition on the parameter $\epsilon$. Then, squaring on both sides of the following inequality
		\begin{align*}
			\norm{\sum_{n=0}^{l}a_{n}(A^{*})^{n}h} \leq
			\norm{\sum_{n=0}^{l}a_{n}(A^{*})^{n}g}+\norm{\sum_{n=0}^{l}a_{n}(A^{*})^{n}(g-h)},
		\end{align*}
		and  using inequality  \eqref{P2_S1_1}, we get
		\begin{align*}
			\norm{\sum_{n=0}^{l}a_{n}(A^{*})^{n}h} ^{2} & \leq  \left(
			\norm{\sum_{n=0}^{l}a_{n}(A^{*})^{n}g} +
			\norm{\sum_{n=0}^{l}a_{n}(A^{*})^{n}(g-h)}\right)^{2}\\
			& \leq  2
			\norm{\sum_{n=0}^{l}a_{n}(A^{*})^{n}g} ^{2} +2
			\norm{\sum_{n=0}^{l}a_{n}(A^{*})^{n}(g-h)}^{2}\\	
			&  \leq 2 \norm{\sum_{n=0}^{l}a_{n}(A^{*})^{n}g} ^{2} + 2\lambda
			\norm{\sum_{n=0}^{l}a_{n}(A^{*})^{n}g} ^{2}  \\
			&	\quad + 2\mu
			\norm{\sum_{n=0}^{l}a_{n}(A^{*})^{n}g} \,
			\norm{\sum_{n=0}^{l}a_{n}(A^{*})^{n}h} + 2\nu
			\norm{\sum_{n=0}^{l}a_{n}(A^{*})^{n}h} ^{2}.
		\end{align*}
		This gives
		\begin{align}\label{myneweq9978}
			(1-2\nu)
			\norm{\sum_{n=0}^{l}a_{n}(A^{*})^{n}h} ^{2} & \leq 2(1+ \lambda)
			\norm{\sum_{n=0}^{l}a_{n}(A^{*})^{n}g} ^{2} + 2\mu
			\norm{\sum_{n=0}^{l}a_{n}(A^{*})^{n}g} \,
			\norm{\sum_{n=0}^{l}a_{n}(A^{*})^{n}h} .
		\end{align}
		Now, Young's Inequality given in Theorem \ref{Young} yields
		\begin{align}\label{myneweq22}
			\mu
			\norm{\sum_{n=0}^{l}a_{n}(A^{*})^{n}g} \,
			\norm{\sum_{n=0}^{l}a_{n}(A^{*})^{n}h}  & \leq \frac{\mu \epsilon}{2}
			\norm{\sum_{n=0}^{l}a_{n}(A^{*})^{n}g} ^{2} + \frac{\mu}{2 \epsilon}
			\norm{\sum_{n=0}^{l}a_{n}(A^{*})^{n}h} ^{2}.
		\end{align}
		Using \eqref{myneweq22} in \eqref{myneweq9978}, we obtain
		\begin{align*}
			(1- 2\nu)
			\norm{\sum_{n=0}^{l}a_{n}(A^{*})^{n}h} ^{2} & \leq 2(1+ \lambda)
			\norm{\sum_{n=0}^{l}a_{n}(A^{*})^{n}g} ^{2} +\mu \epsilon
			\norm{\sum_{n=0}^{l}a_{n}(A^{*})^{n}g} ^{2} \\
			& \quad + \frac{\mu}{\epsilon}
			\norm{\sum_{n=0}^{l}a_{n}(A^{*})^{n}h} ^{2}.
		\end{align*}
		After collecting terms, we get
		\begin{align*}
			\left(1- 2\nu-\frac{\mu}{\epsilon}\right)
			\norm{\sum_{n=0}^{l}a_{n}(A^{*})^{n}h} ^{2} & \leq \left(2(1+ \lambda)+ \mu \epsilon\right)
			\norm{\sum_{n=0}^{l}a_{n}(A^{*})^{n}g} ^{2},
		\end{align*}
		or,
		\begin{align*}
			\norm{\sum_{n=0}^{l}a_{n}(A^{*})^{n}h} ^{2} \leq \left(\frac{\mu \epsilon^{2}+2(1+ \lambda) \epsilon }{(1-2\nu) \epsilon-\mu }\right)
			\norm{\sum_{n=0}^{l}a_{n}(A^{*})^{n}g} ^{2}
		\end{align*}
		for all finite scalar sequences $\{a_n\}_{n=0}^{\infty}$. Hence, the pre-frame operator $T_{h}$ of $\{(A^{*})^{n}h\}_{n=0}^{\infty}$ is bounded and satisfies
		\begin{align*}
			\norm{T_{h}\{a_n\}_{n=0}^{\infty}}^{2}  \leq \left(\frac{\mu \epsilon^{2}+2(1+ \lambda) \epsilon }{(1-2\nu) \epsilon-\mu }\right) \norm{T_{g}\{a_n\}_{n=0}^{\infty}}^{2}
			\leq \beta_{g} \left(\frac{\mu \epsilon^{2}+2(1+ \lambda) \epsilon }{(1-2\nu) \epsilon-\mu }\right)  \norm{\{a_n\}_{n=0}^{\infty}}^{2}
		\end{align*}
		for all $\{a_n\}_{n=0}^{\infty} \in \ell^2(\mathbb{Z}^{+})$.
		This gives the upper frame condition for $\{(A^{*})^{n}h\}_{n \geq 0}$. Note that the upper frame bound is different than that given in Theorem \ref{P2_Th3.1}. Thus, we have the following new variant of the Nagy-type frame perturbation of the  sampling vector in the stable recovery of the initial state:
	\end{remark}
	\begin{theorem}\label{P2_Th3.3}
		Under the hypothesis of Theorem \ref{P2_Th3.1}, if we replace first term in condition \ref{seccon11} by $(1-2\nu) \epsilon-\mu >0$, then, every $f \in \mathcal{H}$ can also be recovered from the samples $\{\langle A^{n}f,h\rangle\}_{n \geq 0}$ in a stable way.
	\end{theorem}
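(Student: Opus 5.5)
The proof will follow the two-step structure of Theorem \ref{P2_Th3.1}. By Proposition \ref{P2_P2.3}, it suffices to show that $\{(A^{*})^{n}h\}_{n \geq 0}$ is a frame for $\mathcal{H}$. Hypothesis \ref{P2_S1_C1} still gives that $\{(A^{*})^{n}g\}_{n \geq 0}$ is a frame, with bounds $\alpha_g,\beta_g$ and pre-frame operator $T_g$.

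\textbf{Step (I): the upper frame condition.} Here I replace the quadratic-in-$\epsilon$ argument by the computation in the preceding remark. Squaring the triangle inequality $\|\sum a_n (A^*)^n h\| \le \|\sum a_n (A^*)^n g\| + \|\sum a_n (A^*)^n (g-h)\|$ and using $(x+y)^2\le 2x^2+2y^2$ gives a bound. Inserting \eqref{P2_S1_1} and applying the $\epsilon$-version of Young's inequality (Theorem \ref{Young}) to the cross term $2\mu\|\cdot g\|\,\|\cdot h\|$ then yields
\[
\Bigl(1-2\nu-\tfrac{\mu}{\epsilon}\Bigr)\norm{\sum_{n=0}^{l}a_n(A^*)^nh}^2\le \bigl(2(1+\lambda)+\mu\epsilon\bigr)\norm{\sum_{n=0}^{l}a_n(A^*)^ng}^2 .
\]
The new hypothesis $(1-2\nu)\epsilon-\mu>0$ is exactly what makes the left coefficient positive. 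Combining this with $\|T_g\{a_n\}\|^2\le\beta_g\sum|a_n|^2$ gives
\[
\norm{\sum a_n(A^*)^nh}^2\le \beta_g\frac{\mu\epsilon^2+2(1+\lambda)\epsilon}{(1-2\nu)\epsilon-\mu}\sum|a_n|^2
\]
for all finite sequences. Theorem \ref{P2_Th2.5} then shows that $\{(A^*)^nh\}$ is Bessel and that $T_h$ is bounded on $\ell^2(\Z^+)$.

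\textbf{Step (II): the lower frame condition.} This step uses only the second part of condition \ref{seccon11}, $\max\{\lambda+\frac{\mu\epsilon}{2},\frac{\mu}{2\epsilon}+\nu\}<1$, which is unchanged, so Step (II) of Theorem \ref{P2_Th3.1} applies verbatim. First, extend \eqref{P2_S1_1} from finite sequences to all of $\ell^2(\Z^+)$. This is valid because $T_g$, $T_h$ and $T_{g-h}=T_g-T_h$ are now all bounded, so we can pass to the limit over truncations. Next, apply the extended inequality to $\{a_n\}=T_g^\dagger f$ (Theorem \ref{P2_Th2.6}), using $T_gT_g^\dagger f=f$, to obtain
\[
\|f-T_hT_g^\dagger f\|\le\sqrt{\lambda+\tfrac{\mu\epsilon}{2}}\,\|f\|+\sqrt{\tfrac{\mu}{2\epsilon}+\nu}\,\|T_hT_g^\dagger f\|.
\]
Lemma \ref{P2_L2.5} then makes $T_hT_g^\dagger$ invertible with an explicit bound on its inverse. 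Writing $f=T_hT_g^\dagger(T_hT_g^\dagger)^{-1}f$ and applying Cauchy–Schwarz, together with the bound $\|T_g^\dagger\|^2\le 1/\alpha_g$, gives the lower frame bound
\[
\alpha_g\left(\frac{1-\sqrt{\lambda+\mu\epsilon/2}}{1+\sqrt{\mu/(2\epsilon)+\nu}}\right)^2 .
\]

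\textbf{Main obstacle.} The only delicate point is the passage from finite to infinite sequences in \eqref{P2_S1_1}, which must come after Step (I) has established boundedness of $T_h$. A related point is that the same $\epsilon$ is used in both parts of condition \ref{seccon11}. Since the argument there only needs both parts to hold for that common $\epsilon$, this is consistent, and Proposition \ref{P2_P2.3} concludes the proof.
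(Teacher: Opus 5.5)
Your proposal is correct and follows the paper's route. The Bessel bound via $(x+y)^2\le 2x^2+2y^2$, the Nagy inequality \eqref{P2_S1_1} and the $\epsilon$-version of Young's inequality is exactly the computation in the remark preceding Theorem~\ref{P2_Th3.3}, and the lower bound is Step (II) of Theorem~\ref{P2_Th3.1} carried over unchanged; your justification for extending \eqref{P2_S1_1} to all of $\ell^2(\Z^{+})$ via the boundedness of $T_g$ and $T_h$ supplies a detail the paper only asserts.
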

	The following is an illustration of the  condition $(1-2\nu) \epsilon-\mu >0$ in  Theorem \ref{P2_Th3.3}.
	
	\begin{example}\label{P2_Exp3.4}
		Let $\mathcal{H}=\ell^2(\Z^{+})$. Consider the left shift operator $A: \ell^2(\Z^{+}) \rightarrow \ell^2(\Z^{+})$ defined by $A(x_0, x_1,x_2, \ldots)=(x_1,x_2,x_3, \ldots)$ and $g=e_{0}$, $h=2e_{0}$. Then, by using the same step as in Example \ref{P2_Exp3.2}, hypothesis \ref{P2_S1_C1} of Theorem \ref{P2_Th3.1} is satisfied. Also, for any $\epsilon \in (\frac{1}{8}, 12)$, we have  $(1-2\nu)\epsilon-\mu >0$ and  $\max{\{\lambda+ \frac{\mu \epsilon}{2}, \frac{\mu}{2 \epsilon}+ \nu\}}<1$. Hence, by Theorem \ref{P2_Th3.3},  any $f \in \mathcal{H}$ can be stably recovered from the samples $\{\langle A^{n}f,h\rangle\}_{n \geq 0}$.
	\end{example}
	
	The next theorem provides  sufficient conditions for the  stable recovery of the initial state  under Nagy-type perturbation of the dynamical samples obtained by perturbing the evolution operator.
	
	\begin{theorem}\label{P2_Th3.5}
		Let $g \in \mathcal{H}$ and  let $A$ and $B$ be  bounded linear operators acting on $\mathcal{H}$. Suppose that
		\begin{enumerate}[label=(\roman*)]
			\item 	any $f \in \mathcal{H}$ is recovered from the samples $\{\langle A^{n}f,g\rangle\}_{n \geq 0}$ in a stable way and there exist real numbers  $\lambda$, $\mu$, $\nu  \geq 0$ such that
			\begin{align}
				\notag
				\norm{\sum_{n=0}^{l}a_{n}((A^{*})^{n}-(B^{*})^{n})g}^{2} & \leq \lambda \norm{\sum_{n=0}^{l}a_{n}(A^{*})^{n}g}^{2} + \mu \norm{\sum_{n=0}^{l}a_{n}(A^{*})^{n}g} \, \norm{\sum_{n=0}^{l}a_{n}(B^{*})^{n}g} \\
				&	\quad + \nu \norm{\sum_{n=0}^{l}a_{n}(B^{*})^{n}g}^{2},
				\label{P2_S1_3}
			\end{align}
			for all $l \in \Z^{+}$ and 		for all finite scalar sequences $\{a_n\}_{n=0}^{\infty}$.
			\label{P2_S1_C3}
			\item $2(1-\nu)\epsilon^{2}- (\mu +2 \nu)\epsilon-\mu >0$ and  $\max{\{\lambda+ \frac{\mu \epsilon}{2}, \frac{\mu}{2 \epsilon}+ \nu\}}<1$, where $\epsilon >0$. \label{IIvariant1}
		\end{enumerate}
		Then, any $f \in \mathcal{H}$ can also be recovered from the samples $\{\langle B^{n}f,g\rangle\}_{n \geq 0}$ in a stable way.
	\end{theorem}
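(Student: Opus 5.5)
The plan is to follow the proof of Theorem \ref{P2_Th3.1} essentially verbatim, replacing the pair of sequences $\{(A^{*})^{n}g\}_{n\geq 0}$, $\{(A^{*})^{n}h\}_{n\geq 0}$ by $\{(A^{*})^{n}g\}_{n\geq 0}$, $\{(B^{*})^{n}g\}_{n\geq 0}$. The argument there only uses the Nagy-type inequality between two sequences, one of which is a frame. The specific form $(A^{*})^{n}(g-h)$ enters only through the identity $(A^{*})^{n}g-(A^{*})^{n}h=(A^{*})^{n}(g-h)$, and here it is replaced by $(A^{*})^{n}g-(B^{*})^{n}g=((A^{*})^{n}-(B^{*})^{n})g$. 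By Proposition \ref{P2_P2.3}, it suffices to show that $\{(B^{*})^{n}g\}_{n\geq 0}$ is a frame for $\mathcal{H}$. Hypothesis \ref{P2_S1_C3} and Proposition \ref{P2_P2.3} give that $\{(A^{*})^{n}g\}_{n\geq 0}$ is a frame with synthesis operator $T_A$, frame operator $S_A=T_AT_A^{*}$ and bounds $\alpha,\beta$.

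\textbf{Step (I): upper bound.} For a finite sequence $\{a_n\}$, write $x=\sum a_n(A^{*})^{n}g$, $y=\sum a_n(B^{*})^{n}g$ and $d=x-y$. Starting from $\|y\|\leq\|x\|+\|d\|$, I square and apply \eqref{P2_S1_3}. I then use the $\epsilon$-Young inequality (Theorem \ref{Young}) on the cross terms $2\|x\|\|d\|$ and $\mu\|x\|\|y\|$, and apply \eqref{P2_S1_3} once more to the resulting $\frac1\epsilon\|d\|^2$. This yields
\[
\Bigl(1-\nu-\tfrac{\mu}{2\epsilon}-\tfrac{\nu}{\epsilon}-\tfrac{\mu}{2\epsilon^{2}}\Bigr)\|y\|^{2}\leq C(\lambda,\mu,\epsilon)\|x\|^{2}\leq C\beta\sum|a_n|^{2}.
\]
The first part of condition \ref{IIvariant1} makes the left coefficient positive. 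Theorem \ref{P2_Th2.5} then shows that $\{(B^{*})^{n}g\}$ is Bessel and that its synthesis operator $T_B$ is bounded.

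\textbf{Step (II): lower bound.} Inequality \eqref{P2_S1_3} extends from finite sequences to all of $\ell^2(\Z^{+})$ by density, since $T_A$ and $T_B$ are now both bounded. I apply it to $\{a_n\}=T_A^{\dagger}f=\{\langle f,S_A^{-1}(A^{*})^{n}g\rangle\}$ (Theorem \ref{P2_Th2.6}). Because $T_AT_A^{\dagger}f=f$, this gives
\[
\|f-T_BT_A^{\dagger}f\|^{2}\leq\lambda\|f\|^{2}+\mu\|f\|\|T_BT_A^{\dagger}f\|+\nu\|T_BT_A^{\dagger}f\|^{2}.
\]
Young's inequality together with $a^2+b^2\leq(a+b)^2$ then gives
\[
\|f-Uf\|\leq\sqrt{\lambda+\tfrac{\mu\epsilon}{2}}\,\|f\|+\sqrt{\tfrac{\mu}{2\epsilon}+\nu}\,\|Uf\|,
\qquad U=T_BT_A^{\dagger}.
\]
Both constants are less than $1$ by the second part of condition \ref{IIvariant1}. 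Lemma \ref{P2_L2.5} therefore makes $U$ invertible with $\|U^{-1}\|\leq\frac{1+\sqrt{\mu/(2\epsilon)+\nu}}{1-\sqrt{\lambda+\mu\epsilon/2}}$.

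Writing $f=UU^{-1}f=\sum_n\langle U^{-1}f,S_A^{-1}(A^{*})^{n}g\rangle(B^{*})^{n}g$ and taking the inner product with $f$, Cauchy--Schwarz gives
\[
\|f\|^{4}\leq\tfrac{1}{\alpha}\|U^{-1}\|^{2}\|f\|^{2}\sum_n|\langle f,(B^{*})^{n}g\rangle|^{2}.
\]
Here I use that the canonical dual $\{S_A^{-1}(A^{*})^{n}g\}$ has Bessel bound $1/\alpha$. This is the lower frame condition for $\{(B^{*})^{n}g\}$, with bound $\alpha\|U^{-1}\|^{-2}$. Proposition \ref{P2_P2.3} then concludes the proof.

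The main obstacle is the bookkeeping in Step (I), in particular confirming that the Young-inequality manipulations close up with exactly the coefficient controlled by $2(1-\nu)\epsilon^2-(\mu+2\nu)\epsilon-\mu>0$. The other delicate point is justifying the extension of \eqref{P2_S1_3} to infinite $\ell^2$ sequences. That extension needs boundedness of $T_B$ from Step (I) before Step (II) can be run, so the order of the steps matters. Nothing about the relationship between $A$ and $B$, such as commutation or norm closeness, is needed beyond \eqref{P2_S1_3}.
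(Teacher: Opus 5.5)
Your proposal is correct and follows the paper's proof essentially step for step. Step (I) uses the same two rounds of Young's inequality, producing the coefficient $1-\nu-\frac{\mu}{2\epsilon}-\frac{\nu}{\epsilon}-\frac{\mu}{2\epsilon^{2}}$; Step (II) uses the same argument via $T_BT_A^{\dagger}$, Lemma \ref{P2_L2.5}, and Cauchy--Schwarz against the canonical dual, and your explicit density justification for extending \eqref{P2_S1_3} to $\ell^2(\Z^{+})$, using boundedness of $T_A$ and $T_B$, is a point the paper only asserts.
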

	
	\begin{proof}
		First, we show that  $\{(B^{*})^{n}g\}_{n \geq 0}$  satisfies the upper frame condition. By hypothesis, every $f \in \mathcal{H}$ is recovered from the samples $\{\langle A^{n}f,g\rangle\}_{n \geq 0}$ in a stable way. Then, by Proposition \ref{P2_P2.3}, $\{(A^{*})^{n}g\}_{n \geq 0}$ is a frame for $\mathcal{H}$. Then,  by Theorem \ref{P2_Th2.4}, the map $T_{A}: \ell^2(\Z^{+}) \rightarrow \mathcal{H}$ given by
		$T_{A}\{a_{n}\}_{n= 0}^{\infty}=\sum_{n = 0}^{\infty}a_{n}(A^{*})^{n}g$
		is a bounded linear operator with $\norm{T_{A}} \leq \sqrt{\beta_{A}}$.

		Consider
		\begin{align*}
			\norm{\sum_{n=0}^{l}a_{n}(B^{*})^{n}g} & \leq
			\norm{\sum_{n=0}^{l}a_{n}(A^{*})^{n}g}+\norm{\sum_{n=0}^{l}a_{n}((A^{*})^{n}-(B^{*})^{n})g}.
		\end{align*}
		Squaring on both side and using inequality \eqref{P2_S1_3}, we get
		\begin{align}\label{qweq22}
			\norm{\sum_{n=0}^{l}a_{n}(B^{*})^{n}g}^{2} & \leq
			\norm{\sum_{n=0}^{l}a_{n}(A^{*})^{n}g}^{2} + 2
			\norm{\sum_{n=0}^{l}a_{n}(A^{*})^{n}g} \, \norm{\sum_{n=0}^{l}a_{n}((A^{*})^{n}-(B^{*})^{n})g}\notag\\
			& \quad +\norm{\sum_{n=0}^{l}a_{n}((A^{*})^{n}-(B^{*})^{n})g}^{2}\notag\\
			& \leq
			\norm{\sum_{n=0}^{l}a_{n}(A^{*})^{n}g}^{2} + 2
			\norm{\sum_{n=0}^{l}a_{n}(A^{*})^{n}g} \, \norm{\sum_{n=0}^{l}a_{n}((A^{*})^{n}-(B^{*})^{n})g}\notag\\
			& \quad + \lambda
			\norm{\sum_{n=0}^{l}a_{n}(A^{*})^{n}g}^{2} + \mu
			\norm{\sum_{n=0}^{l}a_{n}(A^{*})^{n}g} \,
			\norm{\sum_{n=0}^{l}a_{n}(B^{*})^{n}g} \notag\\
			& \quad + \nu
			\norm{\sum_{n=0}^{l}a_{n}(B^{*})^{n}g}^{2}\notag\\
			& \leq  (1+ \lambda)
			\norm{\sum_{n=0}^{l}a_{n}(A^{*})^{n}g}^{2} + 2
			\norm{\sum_{n=0}^{l}a_{n}(A^{*})^{n}g} \, \norm{\sum_{n=0}^{l}a_{n}((A^{*})^{n}-(B^{*})^{n})g}\notag\\
			& \quad + \mu
			\norm{\sum_{n=0}^{l}a_{n}(A^{*})^{n}g} \,
			\norm{\sum_{n=0}^{l}a_{n}(B^{*})^{n}g}+ \nu
			\norm{\sum_{n=0}^{l}a_{n}(B^{*})^{n}g}^{2}.
		\end{align}
		By using Young's Inequality given in Theorem \ref{Young}, we have
		\begin{align}\label{pvweq22}
			\mu
			\norm{\sum_{n=0}^{l}a_{n}(A^{*})^{n}g} \,
			\norm{\sum_{n=0}^{l}a_{n}(B^{*})^{n}g}  & \leq \frac{\mu \epsilon}{2}
			\norm{\sum_{n=0}^{l}a_{n}(A^{*})^{n}g}^{2} + \frac{\mu}{2 \epsilon}
			\norm{\sum_{n=0}^{l}a_{n}(B^{*})^{n}g}^{2},
		\end{align}
		and
		\begin{align}\label{jyweq22}
			2
			\norm{\sum_{n=0}^{l}a_{n}(A^{*})^{n}g} \, \norm{\sum_{n=0}^{l}a_{n}((A^{*})^{n}-(B^{*})^{n})g} &  \leq \epsilon
			\norm{\sum_{n=0}^{l}a_{n}(A^{*})^{n}g}^{2} + \frac{1}{\epsilon}\norm{\sum_{n=0}^{l}a_{n}((A^{*})^{n}-(B^{*})^{n})g}^{2}.
		\end{align}
		Using \eqref{pvweq22} and \eqref{jyweq22} in \eqref{qweq22}, and after collecting terms, we get
		\begin{align*}
			&\left(1- \nu-\frac{\mu}{2 \epsilon}\right) \, \norm{\sum_{n=0}^{l}a_{n}(B^{*})^{n}g}^{2} \\
			& \leq  \left(1+ \lambda+\epsilon+\frac{\mu \epsilon}{2}\right)
			\norm{\sum_{n=0}^{l}a_{n}(A^{*})^{n}g}^{2}
			+ \frac{1}{\epsilon}\norm{\sum_{n=0}^{l}a_{n}((A^{*})^{n}-(B^{*})^{n})g}^{2}\\
			& \leq  \left(1+ \lambda+\epsilon+\frac{\mu \epsilon}{2}\right)
			\norm{\sum_{n=0}^{l}a_{n}(A^{*})^{n}g}^{2} + \frac{\lambda}{\epsilon}
			\norm{\sum_{n=0}^{l}a_{n}(A^{*})^{n}g}^{2} \\
			& \quad	+ \frac{\mu}{\epsilon}
			\norm{\sum_{n=0}^{l}a_{n}(A^{*})^{n}g} \,
			\norm{\sum_{n=0}^{l}a_{n}(B^{*})^{n}g} + \frac{\nu}{\epsilon}
			\norm{\sum_{n=0}^{l}a_{n}(B^{*})^{n}g}^{2}\\
			& \leq  \left(1+ \lambda+\epsilon+\frac{\mu \epsilon}{2}+\frac{\lambda}{\epsilon}\right)
			\norm{\sum_{n=0}^{l}a_{n}(A^{*})^{n}g}^{2}\\
			&  \quad +  \frac{\mu}{\epsilon}\left(\frac{\epsilon}{2}
			\norm{\sum_{n=0}^{l}a_{n}(A^{*})^{n}g}^{2}+\frac{1}{2 \epsilon}
			\norm{\sum_{n=0}^{l}a_{n}(B^{*})^{n}g}^{2}\right)\ \big(\text{by Theorem \ref{Young}}\big).
		\end{align*}
		This gives
		\begin{align*}
			\left(1- \nu-\frac{\mu}{2 \epsilon}-\frac{\nu}{\epsilon}-\frac{\mu}{2 \epsilon^{2}}\right)
			\norm{\sum_{n=0}^{l}a_{n}(B^{*})^{n}g}^{2} & \leq \left(1+ \lambda+\epsilon+\frac{\mu \epsilon}{2}+\frac{\lambda}{\epsilon}+\frac{\mu}{2}\right)
			\norm{\sum_{n=0}^{l}a_{n}(A^{*})^{n}g}^{2},
		\end{align*}
		or	
		\begin{align*}
			\norm{\sum_{n=0}^{l}a_{n}(B^{*})^{n}g}^{2}
			\leq \left(\frac{(\mu+2) \epsilon^{3} +[2(1+ \lambda)+ \mu] \epsilon^{2}+2 \lambda \epsilon}{2(1-\nu)\epsilon^{2}- (\mu +2 \nu)\epsilon-\mu }\right) 	\norm{T_{A}\{a_n\}_{n=0}^{\infty}}^{2},
		\end{align*}
		for all finite scalar sequences $\{a_n\}_{n=0}^{\infty}$.
		This gives the upper frame condition for $\{(B^{*})^{n}g\}_{n \geq 0}$.
		
		Next, we show that  $\{(B^{*})^{n}g\}_{n \geq 0}$ satisfies the lower frame condition. Let $T_B$ be the pre-frame operator of $\{(B^{*})^{n}g\}_{n \geq 0}$.  Since $\{(A^{*})^{n}g\}_{n \geq 0}$ is a frame, the frame operator $S_{A}=T_{A}T^{*}_{A}$ is invertible, and $T^{\dagger}_{A} : \mathcal{H} \rightarrow \ell^2(\Z^{+})$ is given by
		\begin{align*}
			T^{\dagger}_{A}f = \Big\{ \langle f, (T_{A}T^{*}_{A})^{-1}(A^{*})^{n}g \rangle \Big\}_{n=0}^{\infty}, \, f \in \mathcal{H}.
		\end{align*}
		Applying inequality \eqref{P2_S1_3} to the sequence $\{a_{n}\}_{n=0}^{\infty} = 	T^{\dagger}_{A}f$, we get	
		\begin{align*}
			& \norm{\sum_{n=0}^{\infty}\langle f, (T_{A}T^{*}_{A})^{-1}(A^{*})^{n}g \rangle ((A^{*})^{n}-(B^{*})^{n})g}^{2} \\
			& \leq \lambda \norm{\sum_{n=0}^{\infty}\langle f, (T_{A}T^{*}_{A})^{-1}(A^{*})^{n}g \rangle (A^{*})^{n}g}^{2} \\
			& \quad + \mu \norm{\sum_{n=0}^{\infty}\langle f, (T_{A}T^{*}_{A})^{-1}(A^{*})^{n}g \rangle (A^{*})^{n}g} \, \norm{\sum_{n=0}^{\infty}\langle f, (T_{A}T^{*}_{A})^{-1}(A^{*})^{n}g \rangle (B^{*})^{n}g} \\
			& \quad + \nu \norm{\sum_{n=0}^{\infty}\langle f, (T_{A}T^{*}_{A})^{-1}(A^{*})^{n}g \rangle (B^{*})^{n}g}^{2},
		\end{align*}
		or,
		\begin{align*}
			\norm{f-T_{B}T^{\dagger}_{A}f}^{2} &\leq \lambda \norm{f}^{2} + \mu  \norm{f} \, \norm{T_{B}T^{\dagger}_{A}f}  + \nu \norm{T_{B}T^{\dagger}_{A}f}^{2}\\
			& \leq \lambda \norm{f}^{2} + \frac{\mu \epsilon}{2}\norm{f}^{2} + \frac{\mu}{2 \epsilon} \norm{T_{B}T^{\dagger}_{A}f}^{2}   + \nu \norm{T_{B}T^{\dagger}_{A}f}^{2}  \ \Big(\text{by Theorem \ref{Young}}\Big)\\
			& \leq \left(\lambda + \frac{\mu \epsilon}{2}\right)\norm{f}^{2} + \left(\frac{\mu}{2 \epsilon}+ \nu\right)\norm{T_{B}T^{\dagger}_{A}f}^{2} \\
			& \leq \left(\sqrt{\lambda + \frac{\mu \epsilon}{2}} \, \norm{f} + \sqrt{\frac{\mu}{2 \epsilon}+ \nu} \, \norm{T_{B}T^{\dagger}_{A}f} \right)^{2}\ \text{for all} \ f \in \mathcal{H}.
		\end{align*}
		This gives
		\begin{align*}
			\norm{f-T_{B}T^{\dagger}_{A}f} \leq \sqrt{\lambda + \frac{\mu \epsilon}{2}} \, \norm{f} + \sqrt{\frac{\mu}{2 \epsilon}+ \nu} \, \norm{T_{B}T^{\dagger}_{A}f} \ \text{for all} \ f \in \mathcal{H}.
		\end{align*}
		Thus, using $\max{\{\lambda+ \frac{\mu \epsilon}{2}, \frac{\mu}{2 \epsilon}+ \nu\}}<1$ and  Lemma \ref{P2_L2.5}, the operator $T_{B}T^{\dagger}_{A}$ is invertible on $\mathcal{H}$, and
		$
			\norm{(T_{B}T^{\dagger}_{A})^{-1}} \leq \frac{1+\sqrt{\frac{\mu}{2 \epsilon}+ \nu}}{1- \sqrt{\lambda + \frac{\mu \epsilon}{2}}}.
		$
		Now,  every $ f \in \mathcal{H}$ can be expressed as
\begin{align*}
f=T_{B}T^{\dagger}_{A}(T_{B}T^{\dagger}_{A})^{-1}f = \sum_{n=0}^{\infty} \langle (T_{B}T^{\dagger}_{A})^{-1}f, (T_{A}T^{*}_{A})^{-1}(A^{*})^{n}g \rangle (B^{*})^{n}g.
\end{align*}
		Using this, we compute
		\begin{align*}
			\norm{f}^{4}  = {\langle f,f \rangle}^{2} &= \Big|\sum_{n=0}^{\infty} \langle (T_{B}T^{\dagger}_{A})^{-1}f, (T_{A}T^{*}_{A})^{-1}(A^{*})^{n}g \rangle  \langle (B^{*})^{n}g, f \rangle\Big|^{2} \\
			& \leq \sum_{n=0}^{\infty} \abs{\langle (T_{B}T^{\dagger}_{A})^{-1}f, (T_{A}T^{*}_{A})^{-1}(A^{*})^{n}g \rangle }^{2}
			\sum_{n=0}^{\infty} \abs{\langle (B^{*})^{n}g,f \rangle }^{2} \\
			& \leq \frac{1}{\alpha_{A}}  \norm{(T_{B}T^{\dagger}_{A})^{-1}f}^{2} \sum_{n=0}^{\infty} \abs{\langle (B^{*})^{n}g,f \rangle }^{2} \\
			& \leq \frac{1}{\alpha_{A}}\left(\frac{1+\sqrt{\frac{\mu}{2 \epsilon}+ \nu}}{1- \sqrt{\lambda + \frac{\mu \epsilon}{2}}}\right)^{2} \norm{f}^{2} \sum_{n=0}^{\infty} \abs{\langle (B^{*})^{n}g,f \rangle }^{2} \ \text{for all} \ f \in \mathcal{H},
		\end{align*}
		which entails
		\begin{align*}
			\sum_{n=0}^{\infty} \abs{\langle (B^{*})^{n}g,f \rangle }^{2} \geq
			\alpha_{A} \left(\frac{1- \sqrt{\lambda + \frac{\mu \epsilon}{2}}}{1+\sqrt{\frac{\mu}{2 \epsilon}+ \nu}}\right)^{2} \norm{f}^{2} \,\, \text{for all} \,\, f \in \mathcal{H}.
		\end{align*}
		This gives the lower frame condition for $\{(B^{*})^{n}g\}_{n \geq 0}$.
		Hence, by Proposition \ref{P2_P2.3}, every $f \in \mathcal{H}$ can also be recovered from the samples $\{\langle B^{n}f,g\rangle\}_{n \geq 0}$ in a stable way.
	\end{proof}
	
	An example illustrating Theorem \ref{P2_Th3.5} is given below.
	
	\begin{example}\label{P2_Exp3.6}
		Let $\mathcal{H}=\ell^2(\Z^{+})$ and let $\{e_{n}\}_{n \geq 0}$ be  an orthonormal basis of $\mathcal{H}$. Consider the left shift operator $A: \ell^2(\Z^{+}) \rightarrow \ell^2(\Z^{+})$ defined by $A(x_0, x_1,x_2, \ldots)=(x_1,x_2,x_3, \ldots)$. Then, its Hilbert-adjoint operator $A^{*}:\ell^2(\Z^{+}) \rightarrow \ell^2(\Z^{+})$ is $A^{*}(x_0, x_1,x_2, \ldots)=(0,x_0, x_1,x_2, \ldots)$, or $A^{*}e_{n}=e_{n+1} \;\; \text{for all} \;\; n \geq 0$. Let $g=e_{0}$. Then,  $\{(A^{*})^{n}g\}_{n \geq 0}=\{e_{n}\}_{n \geq 0}$ which is an orthonormal basis of the space $\ell^2(\Z^{+})$. Hence, every $f \in \mathcal{H}$ is stably recovered from the samples $\{\langle A^{n}f,g\rangle\}_{n \geq 0}$.
		
		Consider the rank-one perturbation  $B:  \ell^2(\Z^{+}) \rightarrow \ell^2(\Z^{+})$  of the operator $A$, that is, $B=A+K$, where $K: \ell^2(\Z^{+}) \rightarrow \ell^2(\Z^{+})$ defined as  $K(x)=\alpha \langle x, e_{0} \rangle  e_{0}$ and $ 0< \alpha <1 $ is fixed. Thus, $B(x_0, x_1,x_2, \ldots)=(x_1+ \alpha x_{0},x_2,x_3, \ldots)$, or $B(e_{0})=\alpha e_{0}$ and $ B(e_{n})=e_{n-1} \;\; \text{for all} \;\; n \geq 1$. The Hilbert-adjoint operator of $B$ is $B^{*}:\ell^2(\Z^{+}) \rightarrow \ell^2(\Z^{+})$ given by $B^{*}(x_0, x_1,x_2, \ldots)=(\alpha x_{0},x_0, x_1,x_2, \ldots)$, or $B^{*}e_{0}=e_{1}+\alpha e_{0}$, and $ B^{*}e_{n}=e_{n+1} \;\; \text{for all} \;\; n \geq 1$. Therefore,
		\begin{align*}
			&\{(B^{*})^{n}g\}_{n \geq 0}=\{e_{n}+\alpha e_{n-1}+\alpha^2 e_{n-2}+ \ldots + \alpha^ne_{0}\}_{n \geq 0}, \intertext{and}
			& \{((A^{*})^{n}-(B^{*})^{n})g\}_{n \geq 0}=\{-(\alpha e_{n-1}+\alpha^2 e_{n-2}+ \ldots + \alpha^ne_{0})\}_{n \geq 0}.
		\end{align*}
		Now, for any finite scalar sequence $\{a_n\}_{n=0}^{\infty}$, we compute
		\begin{align}
			\norm{\sum_{n=0}^{l}a_{n}(A^{*})^{n}g}^{2}  & =\norm{\sum_{n=0}^{l}a_{n}e_{n}}^{2} =\sum_{n=0}^{l}\abs{a_n}^{2}; \label{lkex771} \\
			\norm{\sum_{n=0}^{l}a_{n}(B^{*})^{n}g}^{2} & =\norm{\sum_{n=0}^{l}a_{n}(e_{n}+\alpha e_{n-1}+\alpha^2 e_{n-2}+ \ldots +  \alpha^ne_{0})}^{2} \notag\\
			& = \norm{a_0e_{0}+a_1(e_{1}+\alpha e_{0})+\ldots+a_l(e_{l}+\alpha e_{l-1}+\ldots +\alpha^{l}e_{0})}^{2} \notag\\
			&=\norm{(a_0+a_1 \alpha+\ldots+a_l \alpha^{l})e_{0}+(a_1+a_2 \alpha+\ldots+a_l \alpha^{l-1})e_{1}+ \ldots+a_l e_{l}}^{2} \notag\\
			&= \abs{a_0+a_1 \alpha +\ldots+a_l \alpha^{l}}^{2} + \abs{a_1+a_2 \alpha +\ldots+a_l \alpha^{l-1}}^{2} \notag\\
			& \quad + \ldots + \abs{a_{l-1}+a_l \alpha }^{2} + \abs{a_l}^{2} \notag\\
			&= \abs{\sum_{m=0}^{l}a_m \alpha^{m}}^{2} +\abs{\sum_{m=1}^{l}a_m \alpha^{m-1}}^{2} + \ldots+\abs{\sum_{m=l}^{l}a_m \alpha^{m-l}}^{2} \notag\\
			&= \sum_{n=0}^{l}\abs{\sum_{m=n}^{l}a_m \alpha^{m-n}}^{2}; \label{lkex772}
		\end{align}
		and
		\begin{align}\label{lkex7733}
			\norm{\sum_{n=0}^{l}a_{n}((A^{*})^{n}-(B^{*})^{n})g}^{2}  &=\norm{-\sum_{n=0}^{l}a_{n}(\alpha e_{n-1}+\alpha^2 e_{n-2}+ \ldots + \alpha^ne_{0})}^{2} \notag\\
			&=\norm{\sum_{n=0}^{l}a_{n}(\alpha e_{n-1}+\alpha^2 e_{n-2}+ \ldots +\alpha^ne_{0})}^{2} \notag\\
			&=\left \|a_1(\alpha e_{0})+a_2(\alpha e_{1}+\alpha^{2} e_{0}) \right. \notag\\
			& \quad \left. +\ldots +a_l(\alpha e_{l-1}+ \alpha^{2} e_{l-2}+\ldots +\alpha^{l}e_{0})\right \|^{2} \notag\\
			&=\left \| (a_1 \alpha + a_2 \alpha^{2}+\ldots+a_l \alpha^{l})e_{0}+(a_2 \alpha+\ldots+a_l \alpha^{l-1})e_{1} \right. \notag \\
			& \quad \left. + \ldots+a_l \alpha e_{l-1}\right \|^{2} \notag\\
			&= \abs{a_1 \alpha +\ldots+a_l \alpha^{l}}^{2} + \abs{a_2 \alpha +\ldots+a_l \alpha^{l-1}}^{2} \notag\\
			& \quad + \ldots + \abs{a_{l-1} \alpha+a_l \alpha^{2} }^{2} + \abs{a_l \alpha}^{2} \notag\\
			&= \abs{\sum_{m=1}^{l}a_m \alpha^{m}}^{2} +\abs{\sum_{m=2}^{l}a_m \alpha^{m-1}}^{2} + \ldots+\abs{\sum_{m=l}^{l}a_m \alpha^{m-(l-1)}}^{2} \notag\\
			&= \sum_{n=0}^{l-1}\abs{\sum_{m=n+1}^{l}a_m \alpha^{m-n}}^{2}.
		\end{align}
		Write $M_{n}=\sum_{m=n+1}^{l}a_m \alpha^{m-n}$ and $T_{n}=\sum_{m=n}^{l}a_m \alpha^{m-n}$. Then,
		\begin{align*}
			M_{n}=\sum_{m=n+1}^{l}a_m \alpha^{m-n}=\alpha \left(\sum_{m=n+1}^{l}a_m \alpha^{m-(n+1)}\right) = \alpha T_{n+1}.
		\end{align*}
		Therefore,
		\begin{align*}
			\sum_{n=0}^{l-1} \abs{M_{n}}^{2} = \sum_{n=0}^{l-1} \abs{\alpha T_{n+1}}^{2} = \abs{\alpha}^{2} \sum_{n=0}^{l-1} \abs{ T_{n+1}}^{2} = \abs{\alpha}^{2} \sum_{n=1}^{l} \abs{ T_{n}}^{2}.
		\end{align*}
		Since, $\sum_{n=1}^{l} \abs{ T_{n}}^{2} \leq \sum_{n=0}^{l} \abs{ T_{n}}^{2} $, we have $\sum_{n=0}^{l-1} \abs{M_{n}}^{2} \leq \abs{\alpha}^{2} \sum_{n=0}^{l} \abs{ T_{n}}^{2}$.
		
		Now, using \eqref{lkex771},  \eqref{lkex772} and \eqref{lkex7733}, we get
		\begin{align*}
			\norm{\sum_{n=0}^{l}a_{n}((A^{*})^{n}-(B^{*})^{n})g}^{2} & =  \sum_{n=0}^{l-1}\abs{\sum_{m=n+1}^{l}a_m \alpha^{m-n}}^{2}\\
			& \leq (0.2) \left(\sum_{n=0}^{l}\abs{a_n}^{2}\right)+ (0.1)  \left(\sum_{n=0}^{l}\abs{a_n}^{2}\right) ^{\frac{1}{2}}\left(\sum_{n=0}^{l}\abs{\sum_{m=n}^{l}a_m \alpha^{m-n}}^{2}\right)^{\frac{1}{2}} \\
			& + (0.35) \sum_{n=0}^{l}\abs{\sum_{m=n}^{l}a_m \alpha^{m-n}}^{2}\\
			& = \lambda \norm{\sum_{n=0}^{l}a_{n}(A^{*})^{n}g}^{2} + \mu \norm{\sum_{n=0}^{l}a_{n}(A^{*})^{n}g} \, \norm{\sum_{n=0}^{l}a_{n}(B^{*})^{n}g} \\
			&	\quad + \nu \norm{\sum_{n=0}^{l}a_{n}(B^{*})^{n}g}^{2},
		\end{align*}
		where, we choose $\alpha =0.5$, $\lambda=0.2$, $\mu=0.1$, $\nu= \abs{\alpha}^{2}+ 0.1=0.35$.
		Thus, hypothesis  \eqref{P2_S1_3} of Theorem \ref{P2_Th3.5} is satisfied.	Also, for any $\epsilon \in (0.7219, 16)$, we have  $2(1-\nu)\epsilon^{2}- (\mu +2 \nu)\epsilon-\mu >0$ and  $\max{\{\lambda+ \frac{\mu \epsilon}{2}, \frac{\mu}{2 \epsilon}+ \nu\}}<1$. Hence, by Theorem \ref{P2_Th3.5}, any $f \in \mathcal{H}$ can be stably recovered from the samples $\{\langle B^{n}f,g\rangle\}_{n \geq 0}$.
	\end{example}
	
	\begin{remark}\label{viewrem11}
		If we replace the condition $2(1-\nu)\epsilon^{2}- (\mu +2 \nu)\epsilon-\mu >0$, the first term  in condition \ref{seccon11} of Theorem \ref{P2_Th3.5} by $(1-2\nu) \epsilon-\mu >0$, which is a  linear expression in $\epsilon$.
		Then, we have
		\begin{align*}
			\norm{\sum_{n=0}^{l}a_{n}(B^{*})^{n}g} ^{2}
			& \leq  \left(
			\norm{\sum_{n=0}^{l}a_{n}(A^{*})^{n}g} +
			\norm{\sum_{n=0}^{l}a_{n}((A^{*})^{n}-(B^{*})^{n})g}\right)^{2}\\
			& \leq  2 \norm{\sum_{n=0}^{l}a_{n}(A^{*})^{n}g} ^{2} +2
			\norm{\sum_{n=0}^{l}a_{n}((A^{*})^{n}-(B^{*})^{n})g}^{2}\\
			& \leq 2
			\norm{\sum_{n=0}^{l}a_{n}(A^{*})^{n}g} ^{2} + 2\lambda
			\norm{\sum_{n=0}^{l}a_{n}(A^{*})^{n}g} ^{2}  \\
			& \quad + 2\mu
			\norm{\sum_{n=0}^{l}a_{n}(A^{*})^{n}g} \,
			\norm{\sum_{n=0}^{l}a_{n}(B^{*})^{n}g} + 2\nu
			\norm{\sum_{n=0}^{l}a_{n}(B^{*})^{n}g} ^{2}.
		\end{align*}
		This implies
		\begin{align*}
			(1-2\nu)
			\norm{\sum_{n=0}^{l}a_{n}(B^{*})^{n}g} ^{2}& \leq 2(1+ \lambda)
			\norm{\sum_{n=0}^{l}a_{n}(A^{*})^{n}g} ^{2} + 2\mu
			\norm{\sum_{n=0}^{l}a_{n}(A^{*})^{n}g} \,
			\norm{\sum_{n=0}^{l}a_{n}(B^{*})^{n}g}\\
			& \leq 2(1+ \lambda)
			\norm{\sum_{n=0}^{l}a_{n}(A^{*})^{n}g} ^{2} +\mu \epsilon
			\norm{\sum_{n=0}^{l}a_{n}(A^{*})^{n}g} ^{2} \\
			& \quad + \frac{\mu}{\epsilon}
			\norm{\sum_{n=0}^{l}a_{n}(B^{*})^{n}g} ^{2} \ \big(\text{by Theorem \ref{Young}}\big).
		\end{align*}
		After collecting terms, we get
		\begin{align*}
			\left(1- 2\nu-\frac{\mu}{\epsilon}\right)
			\norm{\sum_{n=0}^{l}a_{n}(B^{*})^{n}g} ^{2} & \leq \left(2(1+ \lambda)+ \mu \epsilon\right)
			\norm{\sum_{n=0}^{l}a_{n}(A^{*})^{n}g} ^{2},
		\end{align*}
		or,
		\begin{align*}
			\norm{\sum_{n=0}^{l}a_{n}(B^{*})^{n}g} ^{2} \leq \left(\frac{\mu \epsilon^{2}+2(1+ \lambda) \epsilon }{(1-2\nu) \epsilon-\mu }\right)
			\norm{\sum_{n=0}^{l}a_{n}(A^{*})^{n}g} ^{2}
		\end{align*}
		for all finite scalar sequences $\{a_n\}_{n=0}^{\infty}$.
		Thus, $\{(B^{*})^{n}g\}_{n \geq 0}$  is a Bessel sequence.
	\end{remark}
	In view of Remark \ref{viewrem11},  we have the following alternative perturbation result in terms of  a linear condition on $\epsilon$.
	
	\begin{theorem}\label{P2_Th3.7}
		Under the hypothesis of Theorem \ref{P2_Th3.5}, where we replace the first term in condition \ref{IIvariant1} by $(1-2\nu) \epsilon-\mu >0$, then any $f \in \mathcal{H}$ can also be recovered from the samples  $\{\langle B^{n}f,g\rangle\}_{n \geq 0}$ in a stable way.
	\end{theorem}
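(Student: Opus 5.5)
By Proposition \ref{P2_P2.3}, it suffices to show that $\{(B^{*})^{n}g\}_{n\geq 0}$ is a frame for $\mathcal{H}$. The hypothesis says $\{(A^{*})^{n}g\}_{n\geq 0}$ is a frame, with bounds $\alpha_A,\beta_A$, synthesis operator $T_A$ and frame operator $S_A=T_AT_A^{*}$. I will establish the upper and lower frame conditions for $\{(B^{*})^{n}g\}_{n\geq 0}$ separately. The first uses only the new linear condition $(1-2\nu)\epsilon-\mu>0$; the second uses only the unchanged condition $\max\{\lambda+\frac{\mu\epsilon}{2},\frac{\mu}{2\epsilon}+\nu\}<1$.

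\textbf{Upper frame condition.} I follow Remark \ref{viewrem11}. Start from the triangle inequality and apply $(x+y)^2\le 2x^2+2y^2$. Insert \eqref{P2_S1_3}, then split the cross term $2\mu\|\cdot\|\,\|\cdot\|$ by the $\epsilon$-Young inequality of Theorem \ref{Young}. Collecting terms gives
\begin{align*}
\Big(1-2\nu-\frac{\mu}{\epsilon}\Big)\Big\|\sum_{n=0}^{l}a_n(B^{*})^{n}g\Big\|^{2}\le \big(2(1+\lambda)+\mu\epsilon\big)\Big\|\sum_{n=0}^{l}a_n(A^{*})^{n}g\Big\|^{2}.
\end{align*}
Since $(1-2\nu)\epsilon-\mu>0$, the coefficient on the left is positive, so we may divide by it. Bounding the right-hand side by $\beta_A\sum_{n=0}^{l}|a_n|^2$ via Theorem \ref{P2_Th2.4}, Theorem \ref{P2_Th2.5} then shows that $\{(B^{*})^{n}g\}$ is Bessel. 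Its bound is $\beta_A\frac{\mu\epsilon^2+2(1+\lambda)\epsilon}{(1-2\nu)\epsilon-\mu}$, and its synthesis operator $T_B$ is bounded.

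\textbf{Lower frame condition.} Here I repeat Step (II) of the proof of Theorem \ref{P2_Th3.5} verbatim.

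1. \emph{Extend the hypothesis to $\ell^2$.} Both $T_A$ and $T_B$ are now bounded, so inequality \eqref{P2_S1_3} passes from finite sequences to all of $\ell^2(\Z^{+})$. This follows by truncating, using continuity of $T_A$ and $T_B$, and taking limits; this justification is where the upper bound from the first part is actually needed.

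2. \emph{Apply it to the pseudo-inverse.} Take $\{a_n\}=T_A^{\dagger}f$, using Theorem \ref{P2_Th2.6}. Since $T_AT_A^{\dagger}=I$, this gives $\|f-T_BT_A^{\dagger}f\|^2\le\lambda\|f\|^2+\mu\|f\|\,\|T_BT_A^{\dagger}f\|+\nu\|T_BT_A^{\dagger}f\|^2$.

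3. \emph{Invert $U=T_BT_A^{\dagger}$.} Apply Young with $\epsilon$ to the middle term and use $a^2+b^2\le(a+b)^2$. This yields the hypothesis of Lemma \ref{P2_L2.5} for $U$, with $\lambda_1=\sqrt{\lambda+\mu\epsilon/2}$ and $\lambda_2=\sqrt{\mu/(2\epsilon)+\nu}$, both in $[0,1)$. Hence $U$ is invertible with $\|U^{-1}\|\le(1+\lambda_2)/(1-\lambda_1)$.

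4. \emph{Derive the lower bound.} Write $f=UU^{-1}f=\sum_n\langle U^{-1}f,S_A^{-1}(A^{*})^ng\rangle (B^{*})^ng$. Then compute $\|f\|^4=|\langle f,f\rangle|^2$ and apply Cauchy--Schwarz. The canonical dual frame of $\{(A^{*})^{n}g\}$ has upper bound $1/\alpha_A$, which gives
\begin{align*}
\sum_{n\ge0}|\langle f,(B^{*})^ng\rangle|^2\ge\alpha_A\Big(\frac{1-\lambda_1}{1+\lambda_2}\Big)^2\|f\|^2 .
\end{align*}

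The only genuinely new point compared with Theorem \ref{P2_Th3.5} is the upper bound under the weaker linear condition, and that is routine. The main obstacle is the limiting argument in step 1. It requires the Bessel property of $\{(B^{*})^{n}g\}$ already established, so the order of the two parts matters.
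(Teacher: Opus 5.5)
Your proposal is correct and is essentially the paper's own argument. The Bessel bound $\beta_A\frac{\mu\epsilon^{2}+2(1+\lambda)\epsilon}{(1-2\nu)\epsilon-\mu}$ is exactly what Remark~\ref{viewrem11} derives from the linear condition, and the lower frame bound is obtained verbatim as in the proof of Theorem~\ref{P2_Th3.5}, using only $\max\{\lambda+\frac{\mu\epsilon}{2},\frac{\mu}{2\epsilon}+\nu\}<1$; your explicit justification of extending \eqref{P2_S1_3} to all of $\ell^2(\Z^{+})$ via the boundedness of $T_A$ and $T_B$ fills in a step the paper only asserts.
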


	We present the following example to illustrate Theorem \ref{P2_Th3.7}.
	
	\begin{example}\label{P2_Exp3.8}
		Let $\mathcal{H}=\ell^2(\Z^{+})$. Consider the left shift operator $A$ and the operator $B$  given in Example \ref{P2_Exp3.6}. Let $g=e_{0}$. Then, by using the same step as in Example \ref{P2_Exp3.6}, hypothesis \ref{P2_S1_C3} of Theorem \ref{P2_Th3.5} is satisfied. Further,  for any $\epsilon \in (\frac{1}{3}, 16)$, we have  $(1-2\nu)\epsilon-\mu >0$ and  $\max{\{\lambda+ \frac{\mu \epsilon}{2}, \frac{\mu}{2 \epsilon}+ \nu\}}<1$. Hence, by Theorem \ref{P2_Th3.7},  any $f \in \mathcal{H}$ can be stably recovered from the samples $\{\langle B^{n}f,g\rangle\}_{n \geq 0}$.
		
	\end{example}

	In the following theorem, we present the stable recovery of the initial state under simultaneous perturbation of the sampling vector and the evolution operator.
	
	\begin{theorem}\label{P2_Th3.9}
		Let $g$, $h \in \mathcal{H}$ and let $A$ and $B$ be  bounded linear operators acting on $\mathcal{H}$. Suppose that
		\begin{enumerate}[label=(\roman*)]
			\item 	any $f \in \mathcal{H}$ is recovered from the samples $\{\langle A^{n}f,g\rangle\}_{n \geq 0}$ in a stable way and there exist constants $\lambda$, $\mu$, $\nu  \geq 0$ such that
			\begin{align}
				\notag
				&	\norm{\sum_{n=0}^{l}a_{n}(A^{*})^{n}(g-h)}^{2} + \norm{\sum_{n =0}^{l}a_{n}((A^{*})^{n}-(B^{*})^{n})h}^{2} \leq \lambda \norm{\sum_{n=0}^{l}a_{n}(A^{*})^{n}g}^{2}  \\
				& \quad +  \mu \norm{\sum_{n=0}^{l}a_{n}(A^{*})^{n}g} \, \norm{\sum_{n=0}^{l}a_{n}(B^{*})^{n}h}+ \nu \norm{\sum_{n=0}^{l}a_{n}(B^{*})^{n}h}^{2}, \;\; l \in \Z^{+},
				\label{P2_S1_5}
			\end{align}
			for all finite scalar sequences $\{a_n\}_{n=0}^{\infty}$.
			\item $2(1-3\nu)\epsilon-3\mu >0$ and  $\max{\{2\lambda+ \mu \epsilon, \frac{\mu}{ \epsilon}+ 2\nu\}}<1$, where $\epsilon >0$.
		\end{enumerate}
		Then, any $f \in \mathcal{H}$ can also be recovered from the samples $\{\langle B^{n}f,h\rangle\}_{n \geq 0}$ in a stable way.
	\end{theorem}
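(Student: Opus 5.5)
By Proposition \ref{P2_P2.3}, it suffices to show that $\{(B^{*})^{n}h\}_{n \geq 0}$ is a frame for $\mathcal{H}$. The approach is the same two-step scheme as in Theorems \ref{P2_Th3.1} and \ref{P2_Th3.5}. The new ingredient is the splitting
\begin{align*}
(A^{*})^{n}g-(B^{*})^{n}h=(A^{*})^{n}(g-h)+((A^{*})^{n}-(B^{*})^{n})h .
\end{align*}
Fix a finite scalar sequence $\{a_n\}$ and abbreviate
\begin{align*}
X=\Big\|\sum_{n=0}^{l}a_n(A^{*})^{n}g\Big\|,\quad Y=\Big\|\sum_{n=0}^{l}a_n(B^{*})^{n}h\Big\|,\quad D=\Big\|\sum_{n=0}^{l}a_n\big((A^{*})^{n}g-(B^{*})^{n}h\big)\Big\|.
\end{align*}
By the triangle inequality and $(u+v)^2\le 2u^2+2v^2$, we get $D^2\le 2\|\sum a_n(A^{*})^{n}(g-h)\|^2+2\|\sum a_n((A^{*})^{n}-(B^{*})^{n})h\|^2$. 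Hypothesis \eqref{P2_S1_5} then gives the combined Nagy-type estimate
\begin{align*}
D^{2}\le 2\lambda X^{2}+2\mu XY+2\nu Y^{2}.
\end{align*}
This explains the factors $2\lambda$, $\mu\epsilon$ (which is $2\mu\cdot\epsilon/2$), $\mu/\epsilon$ and $2\nu$ in condition (ii).

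\textbf{Upper bound.} Start from $Y\le X+D$, so $Y^2\le (X+D)^2$. Bound $(X+D)^2$ by a weighted convexity inequality chosen to reproduce the constant $2(1-3\nu)\epsilon-3\mu$; my first try is $Y^2\le 3X^2+\tfrac32 D^2$ (Young with weights $3$, $3/2$). Insert the estimate for $D^2$ and split the cross term by Theorem \ref{Young}: $3\mu XY\le \tfrac{3\mu\epsilon}{2}X^2+\tfrac{3\mu}{2\epsilon}Y^2$. Absorbing the $Y^2$ terms gives
\begin{align*}
\Big(1-3\nu-\tfrac{3\mu}{2\epsilon}\Big)Y^2\le C X^2 .
\end{align*}
The coefficient on the left is positive exactly when $2(1-3\nu)\epsilon-3\mu>0$. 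If the weights need adjusting to match this constant exactly, I will tune them; any choice producing a positive coefficient suffices. Since $X^2\le\beta_A\sum|a_n|^2$, where $\beta_A$ is the upper frame bound of $\{(A^{*})^{n}g\}$, Theorem \ref{P2_Th2.5} shows that $\{(B^{*})^{n}h\}$ is Bessel. Its synthesis operator $T_{B,h}$ is therefore bounded.

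\textbf{Lower bound.} Let $T_A$ be the synthesis operator of the frame $\{(A^{*})^{n}g\}$ and $T_A^{\dagger}$ its pseudo-inverse (Theorem \ref{P2_Th2.6}). By boundedness of all synthesis operators, the combined inequality extends from finite sequences to all of $\ell^2(\mathbb{Z}^+)$. Apply it to $\{a_n\}=T_A^{\dagger}f$, using $T_AT_A^{\dagger}f=f$:
\begin{align*}
\|f-T_{B,h}T_A^{\dagger}f\|^2\le(2\lambda+\mu\epsilon)\|f\|^2+\Big(\tfrac{\mu}{\epsilon}+2\nu\Big)\|T_{B,h}T_A^{\dagger}f\|^2 .
\end{align*}
Taking square roots, $\|f-Uf\|\le\sqrt{2\lambda+\mu\epsilon}\,\|f\|+\sqrt{\mu/\epsilon+2\nu}\,\|Uf\|$ with $U=T_{B,h}T_A^{\dagger}$. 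Since $\max\{2\lambda+\mu\epsilon,\ \mu/\epsilon+2\nu\}<1$, Lemma \ref{P2_L2.5} shows that $U$ is invertible with an explicit bound on $\|U^{-1}\|$.

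Then write $f=UU^{-1}f=\sum_n\langle U^{-1}f,S_A^{-1}(A^{*})^{n}g\rangle (B^{*})^{n}h$. Applying Cauchy–Schwarz as in Theorem \ref{P2_Th3.1}, together with the fact that the canonical dual frame has upper bound $1/\alpha_A$, gives
\begin{align*}
\sum_n|\langle f,(B^{*})^{n}h\rangle|^2\ge \alpha_A\left(\frac{1-\sqrt{2\lambda+\mu\epsilon}}{1+\sqrt{\mu/\epsilon+2\nu}}\right)^2\|f\|^2 .
\end{align*}
This completes the frame property.

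The main obstacle I expect is the upper-bound bookkeeping. The weights in the convexity/Young step have to be chosen so that the absorbed coefficient of $Y^2$ is positive precisely under the stated linear condition $2(1-3\nu)\epsilon-3\mu>0$. The lower-bound part is routine once the combined Nagy inequality for $D$ is in hand.
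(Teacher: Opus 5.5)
Your proposal is correct and is essentially the paper's proof. Your first-try weights already work exactly, so no further tuning is needed: $(X+D)^2\le 3X^2+\tfrac32 D^2$ combined with $D^2\le 2\lambda X^2+2\mu XY+2\nu Y^2$ gives $Y^2\le 3(1+\lambda)X^2+3\mu XY+3\nu Y^2$. This is precisely the inequality the paper reaches via a three-term triangle inequality and $(a+b+c)^2\le 3(a^2+b^2+c^2)$.

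Your lower-bound argument matches the paper step for step. You extend the combined estimate to $\ell^2(\mathbb{Z}^{+})$, apply it to $T_A^{\dagger}f$, use Lemma \ref{P2_L2.5} to invert $T_{B,h}T_A^{\dagger}$, and finish with Cauchy--Schwarz and the canonical dual bound $1/\alpha_A$.
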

	
	\begin{proof}
		First, we show that  $\{(B^{*})^{n}h\}_{n \geq 0}$  satisfies the upper frame condition in $\mathcal{H}$. On squaring  both side in the following inequality and using $2ab \leq a^2 +b^2$,
		\begin{align*}
			\norm{\sum_{n=0}^{l}a_{n}(B^{*})^{n}h} & = \norm{\sum_{n=0}^{l}a_{n}((B^{*})^{n}h-(A^{*})^{n}h+(A^{*})^{n}h-(A^{*})^{n}g+(A^{*})^{n}g)}\\
			& \leq	 \norm{\sum_{n =0}^{l}a_{n}((B^{*})^{n}-(A^{*})^{n})h}+ \norm{\sum_{n=0}^{l}a_{n}(A^{*})^{n}(h-g)} +\norm{\sum_{n=0}^{l}a_{n}(A^{*})^{n}g},
		\end{align*}
		we get
		\begin{align}\label{boineqI0}
			\norm{\sum_{n=0}^{l}a_{n}(B^{*})^{n}h}^{2} & \leq   \norm{\sum_{n=0}^{l}a_{n}((B^{*})^{n}-(A^{*})^{n})h}^{2}+\norm{\sum_{n=0}^{l}a_{n}(A^{*})^{n}(h-g)}^{2}	+\norm{\sum_{n=0}^{l}a_{n}(A^{*})^{n}g}^{2} \notag\\
			& \quad +2\norm{\sum_{n =0}^{l}a_{n}((B^{*})^{n}-(A^{*})^{n})h} \, \norm{\sum_{n=0}^{l}a_{n}(A^{*})^{n}(h-g)} \notag\\
			& \quad +2\norm{\sum_{n=0}^{l}a_{n}(A^{*})^{n}(h-g)} \, \norm{\sum_{n=0}^{l}a_{n}(A^{*})^{n}g} \notag\\
			& \quad +2\norm{\sum_{n =0}^{l}a_{n}((B^{*})^{n}-(A^{*})^{n})h} \, \norm{\sum_{n=0}^{l}a_{n}(A^{*})^{n}g} \notag\\
			& \leq \norm{\sum_{n=0}^{l}a_{n}((B^{*})^{n}-(A^{*})^{n})h}^{2}
			+\norm{\sum_{n=0}^{l}a_{n}(A^{*})^{n}(h-g)}^{2}	\notag\\
			& \quad	+\norm{\sum_{n=0}^{l}a_{n}(A^{*})^{n}g}^{2}
			+\norm{\sum_{n=0}^{l}a_{n}((B^{*})^{n}-(A^{*})^{n})h}^{2} \notag\\
			& \quad +\norm{\sum_{n=0}^{l}a_{n}(A^{*})^{n}(h-g)}^{2}+\norm{\sum_{n=0}^{l}a_{n}(A^{*})^{n}(h-g)}^{2}\notag\\
			& \quad 	+\norm{\sum_{n=0}^{l}a_{n}(A^{*})^{n}g}^{2} +\norm{\sum_{n=0}^{l}a_{n}((B^{*})^{n}-(A^{*})^{n})h}^{2}+\norm{\sum_{n=0}^{l}a_{n}(A^{*})^{n}g}^{2} \notag\\
			& = 3\left[\norm{\sum_{n=0}^{l}a_{n}((B^{*})^{n}-(A^{*})^{n})h}^{2}
			+\norm{\sum_{n=0}^{l}a_{n}(A^{*})^{n}(h-g)}^{2}\right] \notag\\	
			& \quad +3\norm{\sum_{n=0}^{l}a_{n}(A^{*})^{n}g}^{2}\notag\\
			& \leq 3\lambda \norm{\sum_{n=0}^{l}a_{n}(A^{*})^{n}g}^{2} +  3\mu \norm{\sum_{n=0}^{l}a_{n}(A^{*})^{n}g} \, \norm{\sum_{n=0}^{l}a_{n}(B^{*})^{n}h} \notag\\
			& \quad + 3\nu \norm{\sum_{n=0}^{l}a_{n}(B^{*})^{n}h}^{2}+3\norm{\sum_{n=0}^{l}a_{n}(A^{*})^{n}g}^{2} \ \big(\text{by using \eqref{P2_S1_5}}\big).
		\end{align}
		By using Young's inequality given in Theorem \ref{Young}, we have
		\begin{align*}
			\mu
			\norm{\sum_{n=0}^{l}a_{n}(A^{*})^{n}g} \,
			\norm{\sum_{n=0}^{l}a_{n}(B^{*})^{n}h}  \leq \frac{\mu \epsilon}{2}
			\norm{\sum_{n=0}^{l}a_{n}(A^{*})^{n}g}^{2} + \frac{\mu}{2 \epsilon}
			\norm{\sum_{n=0}^{l}a_{n}(B^{*})^{n}h}^{2}.		
		\end{align*}
		Using this,	after collecting terms in inequality \eqref{boineqI0}, we get
		\begin{align*}
			\left(1- \frac{3 \mu}{2 \epsilon} -3 \nu \right)
			\norm{\sum_{n=0}^{l}a_{n}(B^{*})^{n}h}^{2} \leq \left(3(1+ \lambda)+\frac{3 \mu \epsilon}{2}\right)
			\norm{\sum_{n=0}^{l}a_{n}(A^{*})^{n}g}^{2}.
		\end{align*}
		That is,
		\begin{align}\label{mulasth7}
			\norm{\sum_{n=0}^{l}a_{n}(B^{*})^{n}h}^{2} \leq \left(\frac{3 \mu \epsilon^{2}+ 6(1+ \lambda) \epsilon}{2(1-3\nu) \epsilon-3\mu }\right)
			\norm{\sum_{n=0}^{l}a_{n}(A^{*})^{n}g}^{2}
		\end{align}
		for all finite scalar sequences $\{a_n\}_{n=0}^{\infty}$. By hypothesis, $\{(A^{*})^{n}g\}_{n \geq 0}$ is a Bessel sequence.
		Thus, from inequality \eqref{mulasth7}, we conclude that $\{(B^{*})^{n}h\}_{n \geq 0}$  is a Bessel sequence.

		Next, we show that $\{(B^{*})^{n}h\}_{n \geq 0}$ satisfies the lower frame condition in $\mathcal{H}$. Let $T_{B_h}$ be the pre-frame operator of $\{(B^{*})^{n}h\}_{n \geq 0}$ and $\alpha_{A_g}$ be a lower frame bound of $\{(A^{*})^{n}g\}_{n \geq 0}$.  If $T_{A_g}$ is the pre-frame operator of $\{(A^{*})^{n}g\}_{n \geq 0}$, then the frame operator $S_{A_g}=T_{A_g}T^{*}_{A_g}$ is invertible on $\mathcal{H}$. Let $T^{\dagger}_{A_g} : \mathcal{H} \rightarrow \ell^2(\Z^{+})$ be the pseudo-inverse of $T_{A_g}$. Then,
		$T^{\dagger}_{A_g}f = \Big\{ \big\langle f, (T_{A_g}T^{*}_{A_g})^{-1}(A^{*})^{n}g \big\rangle \Big\}_{n=0}^{\infty}$. Squaring  both sides in the following inequality
		\begin{align*}
			\norm{\sum_{n=0}^{\infty}a_{n}((A^{*})^{n}g-(B^{*})^{n}h)} & = \norm{\sum_{n=0}^{\infty}a_{n}((A^{*})^{n}g-(A^{*})^{n}h+(A^{*})^{n}h-(B^{*})^{n}h)}\\
			& \leq 	\norm{\sum_{n=0}^{\infty}a_{n}(A^{*})^{n}(g-h)} + \norm{\sum_{n =0}^{\infty}a_{n}((A^{*})^{n}-(B^{*})^{n})h},
		\end{align*}
		we get
		\begin{align}\label{Lst775}
			\norm{\sum_{n=0}^{\infty}a_{n}((A^{*})^{n}g-(B^{*})^{n}h)}^{2} & \leq \left(\norm{\sum_{n=0}^{\infty}a_{n}(A^{*})^{n}(g-h)} + \norm{\sum_{n =0}^{\infty}a_{n}((A^{*})^{n}-(B^{*})^{n})h}\right)^{2} \notag\\
			& \leq 2 \left[\norm{\sum_{n=0}^{\infty}a_{n}(A^{*})^{n}(g-h)}^{2} + \norm{\sum_{n =0}^{\infty}a_{n}((A^{*})^{n}-(B^{*})^{n})h}^{2}\right] \notag\\
			& \leq 2\lambda \norm{\sum_{n=0}^{l}a_{n}(A^{*})^{n}g}^{2}  + 2 \mu \norm{\sum_{n=0}^{l}a_{n}(A^{*})^{n}g} \, \norm{\sum_{n=0}^{l}a_{n}(B^{*})^{n}h} \notag\\
			&  \quad + 2\nu \norm{\sum_{n=0}^{l}a_{n}(B^{*})^{n}h}^{2} \quad \big(\text{by using \eqref{P2_S1_5}}\big).
		\end{align}
		Applying inequality \eqref{Lst775} to the sequence $\{a_{n}\}_{n=0}^{\infty} = 	T^{\dagger}_{A_g}f$, we obtain 	
		\begin{align*}
			& \norm{\sum_{n=0}^{\infty}\langle f, (T_{A_g}T^{*}_{A_g})^{-1}(A^{*})^{n}g \rangle ((A^{*})^{n}g-(B^{*})^{n}h)}^{2} \\
			& \leq 2 \lambda \norm{\sum_{n=0}^{\infty}\langle f, (T_{A_g}T^{*}_{A_g})^{-1}(A^{*})^{n}g \rangle (A^{*})^{n}g}^{2} \\
			& \quad + 2\mu \norm{\sum_{n=0}^{\infty}\langle f, (T_{A_g}T^{*}_{A_g})^{-1}(A^{*})^{n}g \rangle (A^{*})^{n}g} \, \norm{\sum_{n=0}^{\infty}\langle f, (T_{A_g}T^{*}_{A_g})^{-1}(A^{*})^{n}g \rangle (B^{*})^{n}h} \\
			& \quad + 2\nu \norm{\sum_{n=0}^{\infty}\langle f, (T_{A_g}T^{*}_{A_g})^{-1}(A^{*})^{n}g \rangle (B^{*})^{n}h}^{2},
		\end{align*}
		or,
		\begin{align*}
			\norm{f-T_{B_h}T^{\dagger}_{A_g}f}^{2} & \leq 2\lambda \norm{f}^{2} + 2\mu \norm{f} \, \norm{T_{B_h}T^{\dagger}_{A_g}f}  + 2\nu \norm{T_{B_h}T^{\dagger}_{A_g}f}^{2}\\
			& \leq 2\lambda \norm{f}^{2} + \frac{2\mu \norm{f}^{2} \, \epsilon}{2} + \frac{2 \mu}{2 \epsilon} \norm{T_{B_h}T^{\dagger}_{A_g}f}^{2}  + 2\nu \norm{T_{B_h}T^{\dagger}_{A_g}f}^{2} \ (\text{by Theorem \ref{Young}})\\
			& \leq \left(2\lambda +\mu \epsilon\right)\norm{f}^{2} + \left(\frac{\mu}{\epsilon}+ 2\nu\right)\norm{T_{B_h}T^{\dagger}_{A_g}f}^{2} \\
			& \leq \left(\sqrt{2\lambda + \mu \epsilon} \, \norm{f} + \sqrt{\frac{\mu}{\epsilon}+ 2\nu} \, \norm{T_{B_h}T^{\dagger}_{A_g}f} \right)^{2}.
		\end{align*}
		Since, $\max{\{2\lambda+ \mu \epsilon, \frac{\mu}{ \epsilon}+ 2\nu\}}<1$. By using Lemma \ref{P2_L2.5}, $T_{B_h}T^{\dagger}_{A_g}$ is invertible, and
		\begin{align*}
			\norm{(T_{B_h}T^{\dagger}_{A_g})^{-1}} \leq \frac{1+\sqrt{\frac{\mu}{ \epsilon}+ 2\nu}}{1- \sqrt{2\lambda + \mu \epsilon}}.
		\end{align*}
		Now, every $ f \in \mathcal{H}$ can be written as
		\begin{align*}
			f=T_{B_h}T^{\dagger}_{A_g}(T_{B_h}T^{\dagger}_{A_g})^{-1}f = \sum_{n=0}^{\infty} \langle (T_{B_h}T^{\dagger}_{A_g})^{-1}f, (T_{A_g}T^{*}_{A_g})^{-1}(A^{*})^{n}g \rangle (B^{*})^{n}h.
		\end{align*}
		Therefore, for every $ f \in \mathcal{H}$, we have
		\begin{align*}
			\norm{f}^{4} = {\langle f,f \rangle}^{2} & = \abs{\sum_{n=0}^{\infty} \langle (T_{B_h}T^{\dagger}_{A_g})^{-1}f, (T_{A_g}T^{*}_{A_g})^{-1}(A^{*})^{n}g \rangle  \langle (B^{*})^{n}h, f \rangle}^{2} \\
			& \leq \sum_{n=0}^{\infty} \abs{\langle (T_{B_h}T^{\dagger}_{A_g})^{-1}f, (T_{A_g}T^{*}_{A_g})^{-1}(A^{*})^{n}g \rangle }^{2}
			\sum_{n=0}^{\infty} \abs{\langle (B^{*})^{n}h,f \rangle }^{2} \\
			& \leq \frac{1}{\alpha_{A_g}}  \norm{(T_{B_h}T^{\dagger}_{A_g})^{-1}f}^{2} \sum_{n=0}^{\infty} \abs{\langle (B^{*})^{n}h,f \rangle }^{2} \\
			& \leq \frac{1}{\alpha_{A_g}}\left(\frac{1+\sqrt{\frac{\mu}{ \epsilon}+ 2\nu}}{1- \sqrt{2\lambda + \mu \epsilon}}\right)^{2} \norm{f}^{2} \sum_{n=0}^{\infty} \abs{\langle (B^{*})^{n}h,f \rangle }^{2}.
		\end{align*}
		This gives $\sum_{n=0}^{\infty} \abs{\langle (B^{*})^{n}h,f \rangle }^{2} \geq \alpha_{A_g} \left(\frac{1- \sqrt{2\lambda + \mu \epsilon}}{1+\sqrt{\frac{\mu}{ \epsilon}+ 2 \nu}}\right)^{2} \norm{f}^{2}$ for all  $f \in \mathcal{H}$. Thus, $\{(B^{*})^{n}h\}_{n \geq 0}$ satisfies the lower frame condition. Hence, by Proposition \ref{P2_P2.3}, any $f \in \mathcal{H}$ can also be recovered from the samples $\{\langle B^{n}f,h\rangle\}_{n \geq 0}$ in a stable way.
	\end{proof}
	
	The following example highlights the applicability of Theorem \ref{P2_Th3.9}.
	
	\begin{example}\label{P2_Exp3.10}
		Let $\mathcal{H}=\ell^2(\Z^{+})$. Consider the left shift operator $A$ and the operator $B$  given in Example \ref{P2_Exp3.6}. Let $g=e_{0}$ and $h=\frac{e_{0}}{2}$. Then,  $\{(A^{*})^{n}g\}_{n \geq 0}=\{e_{n}\}_{n \geq 0}$ which is an orthonormal basis of the space $\ell^2(\Z^{+})$. Hence, every $f \in \mathcal{H}$ is stably recovered from the samples $\{\langle A^{n}f,g\rangle\}_{n \geq 0}$. Now, $g-h=\frac{e_{0}}{2}$. We have,
		\begin{align*}
			& \{(A^{*})^{n}(g-h)\}_{n \geq 0}=\Big\{\frac{e_{n}}{2}\Big\}_{n \geq 0}; \quad
			\{(B^{*})^{n}h\}_{n \geq 0}=\left\{\frac{1}{2}(e_{n}+\alpha e_{n-1}+\alpha^2 e_{n-2}+ \ldots +\alpha^ne_{0})\right\}_{n \geq 0},
			\intertext{and}
			& \{((A^{*})^{n}-(B^{*})^{n})h\}_{n \geq 0}=\left\{-\frac{1}{2}(\alpha e_{n-1}+\alpha^2 e_{n-2}+ \ldots +\alpha^ne_{0})\right\}_{n \geq 0}.
		\end{align*}
		For any finite scalar sequence $\{a_n\}_{n=0}^{\infty}$, we compute
		\begin{align}
			& \norm{\sum_{n=0}^{l}a_{n}(A^{*})^{n}g}^{2}  =\norm{\sum_{n=0}^{l}a_{n}e_{n}}^{2} =\sum_{n=0}^{l}\abs{a_n}^{2}; \label{lk121} \\
			& \norm{\sum_{n=0}^{l}a_{n}(A^{*})^{n}(g-h)}^{2}  =\norm{\sum_{n=0}^{l}a_{n}\frac{e_{n}}{2}}^{2} = \frac{1}{4}\sum_{n=0}^{l}\abs{a_n}^{2}; \label{lk122}\\
			&\norm{\sum_{n=0}^{l}a_{n}(B^{*})^{n}h}^{2} =\norm{\frac{1}{2}\sum_{n=0}^{l}a_{n}(e_{n}+\alpha e_{n-1}+\alpha^2 e_{n-2}+ \ldots +\alpha^ne_{0})}^{2} = \frac{1}{4}\sum_{n=0}^{l}\abs{\sum_{m=n}^{l}a_m \alpha^{m-n}}^{2}; \label{lk123}
			\intertext{and}
			& \norm{\sum_{n=0}^{l}a_{n}((A^{*})^{n}-(B^{*})^{n})h}^{2}  =\norm{-\frac{1}{2}\sum_{n=0}^{l}a_{n}(\alpha e_{n-1}+\alpha^2 e_{n-2}+ \ldots +\alpha^ne_{0})}^{2} \notag \\
			& \quad \quad \quad \quad \quad \quad \quad \quad \quad \quad \quad \;\; = \frac{1}{4}\sum_{n=0}^{l-1}\abs{\sum_{m=n+1}^{l}a_m \alpha^{m-n}}^{2}. \label{lk124}
		\end{align}
		Now, using \eqref{lk121}, \eqref{lk122}, \eqref{lk123} and \eqref{lk124}, we get
		\begin{align*}
			&\norm{\sum_{n=0}^{l}a_{n}(A^{*})^{n}(g-h)}^{2} + \norm{\sum_{n =0}^{l}a_{n}((A^{*})^{n}-(B^{*})^{n})h}^{2}\\
			&= \frac{1}{4}\sum_{n=0}^{l}\abs{a_n}^{2} + \frac{1}{4}\sum_{n=0}^{l-1}\abs{\sum_{m=n+1}^{l}a_m \alpha^{m-n}}^{2} \\
			& \leq \frac{1}{4} \left(\sum_{n=0}^{l}\abs{a_n}^{2}\right)  + (0.1) \left(\sum_{n=0}^{l}\abs{a_n}^{2}\right) ^{\frac{1}{2}}\left(\frac{1}{4}\sum_{n=0}^{l}\abs{\sum_{m=n}^{l}a_m \alpha^{m-n}}^{2}\right)^{\frac{1}{2}} + (0.25) \frac{1}{4} \sum_{n=0}^{l}\abs{\sum_{m=n}^{l}a_m \alpha^{m-n}}^{2},
		\end{align*}
		where, we choose $\alpha =0.5$, $\lambda= \frac{1}{4}=0.25$, $\mu=0.1$, $\nu= \abs{\alpha}^{2}=0.25$. Thus, hypothesis \eqref{P2_S1_5} of Theorem \ref{P2_Th3.9} is satisfied.
		Also, for any $\epsilon \in (\frac{3}{5}, 5)$, we have  $2(1-3\nu)\epsilon-3\mu >0$ and  $\max{\{2\lambda+ \mu \epsilon, \frac{\mu}{ \epsilon}+ 2\nu\}}<1$. Hence, by Theorem \ref{P2_Th3.9}, any $f \in \mathcal{H}$ can be stably recovered from the samples $\{\langle B^{n}f,h\rangle\}_{n \geq 0}$.
	\end{example}
We conclude this section with the following remark concerning Nagy-type perturbation for frames in separable Hilbert spaces.
\begin{remark}
If a frame $\{f_{i}\}_{i=1}^{\infty}$ for $\mathcal{H}$ and  a sequence $\{g_{i}\}_{i=1}^{\infty} \subset \mathcal{H}$ satisfy conditions given in  Theorem \ref{P2_Th3.1} or  Theorem \ref{P2_Th3.3}, then $\{g_{i}\}_{i=1}^{\infty}$ constitutes  a frame for $\mathcal{H}$.
\end{remark}

	\section{Dynamical Samples using Pollard-Hilding-Type Frame Perturbation}\label{P2_sec4}
	In this section, we investigate the stability of dynamical sampling systems under Pollard–Hilding-type frame perturbation. We begin with the stable recovery of the initial state that is preserved under Pollard–Hilding-type frame perturbation of the sampling vector.
	\begin{theorem}\label{P2_Th3.13}
		Let $g,h \in \mathcal{H}$ and let $A$ be a bounded linear operator acting on $\mathcal{H}$. Suppose that
		\begin{enumerate}[label=(\roman*)]
			\item 	\label{P2_S4_H1}	
			any $f \in \mathcal{H}$ is recovered from the samples $\{\langle A^{n}f,g\rangle\}_{n \geq 0}$ in a stable way.
			\item 	\label{P2_S4_H2}
			For each positive real number $k$, there exist constants $\lambda_{1}$, $\lambda_{2} \geq 0$ such that
			$\max{\{\lambda_{1}, \lambda_{2}\}} < \min{\{1, 2^{k-1}\}}$ and
			\begin{align}
				\norm{\sum_{n=0}^{l}a_{n}(A^{*})^{n}(g-h)} \leq \left[\lambda_{1} \norm{\sum_{n=0}^{l}a_{n}(A^{*})^{n}g}^{k} + \lambda_{2} \norm{\sum_{n=0}^{l}a_{n}(A^{*})^{n}h}^{k}\right]^{\frac{1}{k}}, \;\; l \in \Z^{+},
				\label{P2_S2_1}
			\end{align}
			for all finite scalar sequences $\{a_n\}_{n=0}^{\infty}$.
		\end{enumerate}
		Then, any $f \in \mathcal{H}$ can also be recovered from the samples $\{\langle A^{n}f,h\rangle\}_{n \geq 0}$ in a stable way.
	\end{theorem}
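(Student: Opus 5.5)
Following the template of Theorem \ref{P2_Th3.1}, I would use Proposition \ref{P2_P2.3} to reduce the statement to showing that $\{(A^{*})^{n}h\}_{n\geq 0}$ is a frame for $\mathcal{H}$. By hypothesis \ref{P2_S4_H1}, $\{(A^{*})^{n}g\}_{n\geq 0}$ is a frame with bounds $\alpha_g,\beta_g$, pre-frame operator $T_g$ and pseudo-inverse $T_g^{\dagger}f=\{\langle f,(T_gT_g^{*})^{-1}(A^{*})^{n}g\rangle\}_{n\ge0}$ (Theorem \ref{P2_Th2.6}). Abbreviate $X=\norm{\sum_{n=0}^{l}a_n(A^{*})^{n}g}$, $Y=\norm{\sum_{n=0}^{l}a_n(A^{*})^{n}h}$ and $D=\norm{\sum_{n=0}^{l}a_n(A^{*})^{n}(g-h)}$. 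Since hypothesis \ref{P2_S4_H2} holds for \emph{each} $k>0$, I am free to fix one convenient exponent. The natural choice is $k=1$, where the constraint reads $\max\{\lambda_1,\lambda_2\}<1$ and \eqref{P2_S2_1} becomes
\begin{align*}
D\le \lambda_1 X+\lambda_2 Y .
\end{align*}
This is exactly the Paley--Wiener/Casazza--Christensen form that Lemma \ref{P2_L2.5} is designed for.

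\textbf{Upper bound.} By the triangle inequality, $Y\le X+D\le (1+\lambda_1)X+\lambda_2 Y$. Hence
\begin{align*}
Y\le \frac{1+\lambda_1}{1-\lambda_2}\,X\le \frac{1+\lambda_1}{1-\lambda_2}\sqrt{\beta_g}\Big(\sum_{n=0}^{l}\abs{a_n}^2\Big)^{1/2}
\end{align*}
for all finite sequences. Theorem \ref{P2_Th2.5} then shows that $\{(A^{*})^{n}h\}$ is Bessel and that $T_h$ is bounded.

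\textbf{Extending to $\ell^2$.} Since $T_g$, $T_h$ and $T_g-T_h$ are now bounded operators on $\ell^2(\Z^{+})$, the finite inequality \eqref{P2_S2_1} passes to every $\{a_n\}\in\ell^2(\Z^{+})$ by taking limits of truncations.

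\textbf{Lower bound.} Apply the extended inequality to $\{a_n\}=T_g^{\dagger}f$. Using $T_gT_g^{\dagger}f=f$, this gives
\begin{align*}
\norm{f-T_hT_g^{\dagger}f}\le \lambda_1\norm{f}+\lambda_2\norm{T_hT_g^{\dagger}f}\quad\text{for all } f\in\mathcal{H}.
\end{align*}
By Lemma \ref{P2_L2.5}, $U=T_hT_g^{\dagger}$ is invertible with $\norm{U^{-1}}\le (1+\lambda_2)/(1-\lambda_1)$. Writing $f=UU^{-1}f=\sum_n\langle U^{-1}f,(T_gT_g^*)^{-1}(A^*)^ng\rangle (A^*)^nh$, I would repeat the Cauchy--Schwarz computation from the proof of Theorem \ref{P2_Th3.1}. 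The canonical dual frame has upper bound $1/\alpha_g$, so this yields
\begin{align*}
\sum_n\abs{\langle f,(A^*)^nh\rangle}^2\ge \alpha_g\Big(\frac{1-\lambda_1}{1+\lambda_2}\Big)^2\norm{f}^2 .
\end{align*}
Together with the Bessel property, this shows $\{(A^{*})^{n}h\}$ is a frame.

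\textbf{Main obstacle.} The main difficulty is interpreting the quantifier on $k$. If the intended reading is that \eqref{P2_S2_1} is given for a single arbitrary $k$, specialising to $k=1$ is no longer available. In that case I would convert the $k$-version into the linear one:
\begin{align*}
D\le(\lambda_1X^k+\lambda_2Y^k)^{1/k}\le \lambda_{\max}^{1/k}(X^k+Y^k)^{1/k}\le \lambda_{\max}^{1/k}c^{1/k}(X+Y),
\end{align*}
where $\lambda_{\max}=\max\{\lambda_1,\lambda_2\}$ and $c$ is the constant of Theorem \ref{Theorem 2.8}. The smallness assumption $\lambda_i<\min\{1,2^{k-1}\}$ should then force $(\lambda_{\max}c)^{1/k}<1$. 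I would check this separately for $k\ge1$ and $k<1$ before rerunning the two steps above with $\lambda_1'=\lambda_2'=(\lambda_{\max}c)^{1/k}$. The case $k<1$ is delicate, and verifying that the paper's bound actually delivers a constant below $1$ there is the one computation I would do first.
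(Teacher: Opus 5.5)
Your proposal is correct and follows the paper's proof: the Bessel bound comes from the triangle inequality, and the lower bound from Lemma \ref{P2_L2.5} applied to $T_hT_g^{\dagger}$, followed by the Cauchy--Schwarz estimate against the canonical dual frame (upper bound $1/\alpha_g$). The paper keeps a general fixed $k$ exactly as in your fallback, using Theorem \ref{Theorem 2.8} to get $\norm{f-T_hT_g^{\dagger}f}\le (c\lambda_1)^{1/k}\norm{f}+(c\lambda_2)^{1/k}\norm{T_hT_g^{\dagger}f}$; the check you deferred is immediate, since $c\lambda_i<1$ holds both when $k\ge 1$ ($c=1$, $\lambda_i<1$) and when $k<1$ ($c=2^{1-k}$, $\lambda_i<2^{k-1}$), so your specialisation to $k=1$ is legitimate but not needed.
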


	\begin{proof}
		First, we show that $\{(A^{*})^{n}h\}_{n \geq 0}$  satisfies the upper frame condition. By hypothesis \ref{P2_S4_H1},
		every $f \in \mathcal{H}$ is recovered from the samples $\{\langle A^{n}f,g\rangle\}_{n \geq 0}$ in a stable way. Therefore, by Proposition \ref{P2_P2.3}, $\{(A^{*})^{n}g\}_{n \geq 0}$ is a frame for $\mathcal{H}$.
		For $h \in \mathcal{H}$, consider the following:
		\begin{align*}
			\norm{\sum_{n=0}^{l}a_{n}(A^{*})^{n}h} & \leq  \norm{\sum_{n=0}^{l}a_{n}(A^{*})^{n}g}+	\norm{\sum_{n=0}^{l}a_{n}(A^{*})^{n}(g-h)} \\
			& \leq \norm{\sum_{n=0}^{l}a_{n}(A^{*})^{n}g} + \left[\lambda_{1}
			\norm{\sum_{n=0}^{l}a_{n}(A^{*})^{n}g}^{k} + \lambda_{2}
			\norm{\sum_{n=0}^{l}a_{n}(A^{*})^{n}h}^{k}\right]^{\frac{1}{k}} \, \big(\text{using \eqref{P2_S2_1}}\big) \\
			& \leq \norm{\sum_{n=0}^{l}a_{n}(A^{*})^{n}g} + c^{\frac{1}{k}} \left[ \left[\lambda_{1}^{\frac{1}{k}}
			\norm{\sum_{n=0}^{l}a_{n}(A^{*})^{n}g} + \lambda_{2}^{\frac{1}{k}}
			\norm{\sum_{n=0}^{l}a_{n}(A^{*})^{n}h}\right]^{k}\right]^{\frac{1}{k}}.
		\end{align*}
		Note that the last step in above follows from Theorem \ref{Theorem 2.8}.
		This implies that
		\begin{align*}
			(1-c^{\frac{1}{k}} \lambda_{2}^{\frac{1}{k}})	\norm{\sum_{n=0}^{l}a_{n}(A^{*})^{n}h} & \leq (1+c^{\frac{1}{k}} \lambda_{1}^{\frac{1}{k}}) \norm{\sum_{n=0}^{l}a_{n}(A^{*})^{n}g}, \quad \text{where} \quad c=\begin{cases}
				1, & \;\text{if} \;  k \geq 1,\\
				2^{1-k}, & \;  \text{if} \; k \leq 1.
			\end{cases}
		\end{align*}
		That is,
		\begin{align*}
			\norm{\sum_{n=0}^{l}a_{n}(A^{*})^{n}h} \leq \frac{(1+c^{\frac{1}{k}} \lambda_{1}^{\frac{1}{k}})}{	(1-c^{\frac{1}{k}} \lambda_{2}^{\frac{1}{k}})}\norm{\sum_{n=0}^{l}a_{n}(A^{*})^{n}g} \ \Big(\text{as} \max{\{\lambda_{1}, \lambda_{2}\}} < \min{\{1, 2^{k-1}\}}\Big),
		\end{align*}
		for all finite scalar sequences $\{a_n\}_{n=0}^{\infty}$. This gives the upper frame condition for $\{(A^{*})^{n}h\}_{n \geq 0}$.
		
		Now, we show that $\{(A^{*})^{n}h\}_{n \geq 0}$ satisfies lower frame condition. Let $T_{g}$ and $T_{h}$ be the pre-frame operators of $\{(A^{*})^{n}g\}_{n \geq 0}$ and $\{(A^{*})^{n}h\}_{n \geq 0}$, respectively. Let $\alpha_{g}$ be a lower frame bound of $\{(A^{*})^{n}g\}_{n \geq 0}$. Recall that the pseudo-inverse of $T_{g}$  is given by
		$T^{\dagger}_{g}f = \{ \langle f, (T_{g}T^{*}_{g})^{-1}(A^{*})^{n}g \rangle \}_{n=0}^{\infty}$, $f \in \mathcal{H}$. Note that if inequality \eqref{P2_S2_1} holds for any finite sequence of scalars, then it holds for any  sequence $\{a_n\}_{n=0}^{\infty} \in \ell^2(\mathbb{Z}^{+})$.  By invoking \eqref{P2_S2_1} for $\{a_{n}\}_{n=0}^{\infty} = 	T^{\dagger}_{g}f$, we get
		\begin{align*}
			& \norm{\sum_{n=0}^{\infty}\langle f, (T_{g}T^{*}_{g})^{-1}(A^{*})^{n})g \rangle ((A^{*})^{n}(g-h)}  \\
			& \leq \left[\lambda_{1} \norm{\sum_{n=0}^{\infty}\langle f, (T_{g}T^{*}_{g})^{-1}(A^{*})^{n}g \rangle (A^{*})^{n}g}^{k}+ \lambda_{2} \norm{\sum_{n=0}^{\infty}\langle f, (T_{g}T^{*}_{g})^{-1}(A^{*})^{n}g \rangle (A^{*})^{n}h}^{k}\right]^{\frac{1}{k}}.
		\end{align*}
		Therefore, for every $f \in \mathcal{H} $, we have
		\begin{align*}
			\norm{f-T_{h}T^{\dagger}_{g}f}  \leq \left[\lambda_{1} \norm{f}^{k} + \lambda_{2} \norm{T_{h}T^{\dagger}_{g}f}^{k}\right]^{\frac{1}{k}}
			\leq c^{\frac{1}{k}} \lambda_{1}^{\frac{1}{k}} \norm{f} +c^{\frac{1}{k}} \lambda_{2}^{\frac{1}{k}}\norm{T_{h}T^{\dagger}_{g}f} \quad \big(\text{by Theorem \ref{Theorem 2.8}}\big).
		\end{align*}
		As, $\max{\{\lambda_{1}, \lambda_{2}\}} < \min{\{1, 2^{k-1}\}}$, by using Lemma \ref{P2_L2.5}, $T_{h}T^{\dagger}_{g}$ is invertible, and
		\begin{align*}
			\norm{(T_{h}T^{\dagger}_{g})^{-1}} \leq \frac{1+c^{\frac{1}{k}} \lambda_{2}^{\frac{1}{k}}}{1-c^{\frac{1}{k}} \lambda_{1}^{\frac{1}{k}}}.
		\end{align*}
		Note that  $f=T_{h}T^{\dagger}_{g}(T_{h}T^{\dagger}_{g})^{-1}f = \sum_{n=0}^{\infty} \langle (T_{h}T^{\dagger}_{g})^{-1}f, (T_{g}T^{*}_{g})^{-1}(A^{*})^{n}g \rangle (A^{*})^{n}h$ for every $ f \in \mathcal{H}$.
		Using this, we compute
		\begin{align*}
			\norm{f}^{4} = {\langle f,f \rangle}^{2}  & = \abs{\sum_{n=0}^{\infty} \langle (T_{h}T^{\dagger}_{g})^{-1}f, (T_{g}T^{*}_{g})^{-1}(A^{*})^{n}g \rangle  \langle (A^{*})^{n}h, f \rangle}^{2} \\
			& \leq \sum_{n=0}^{\infty} \abs{\langle (T_{h}T^{\dagger}_{g})^{-1}f, (T_{g}T^{*}_{g})^{-1}(A^{*})^{n}g \rangle }^{2}
			\sum_{n=0}^{\infty} \abs{\langle (A^{*})^{n}h,f \rangle }^{2} \\
			& \leq \frac{1}{\alpha_{g}}  \norm{(T_{h}T^{\dagger}_{g})^{-1}f}^{2} \sum_{n=0}^{\infty} \abs{\langle (A^{*})^{n}h,f \rangle }^{2} \\
			& \leq \frac{1}{\alpha_g} \left(\frac{1+c^{\frac{1}{k}}\lambda_{2}^{\frac{1}{k}}}{1-c^{\frac{1}{k}}\lambda_{1}^{\frac{1}{k}}}\right)^{2} \norm{f}^{2} \sum_{n=0}^{\infty} \abs{\langle (A^{*})^{n}h,f \rangle }^{2}.
		\end{align*}
		Therefore, $\sum_{n=0}^{\infty} \abs{\langle (A^{*})^{n}h,f \rangle }^{2} \geq \alpha_{g}\left(\frac{1-c^{\frac{1}{k}}\lambda_{1}^{\frac{1}{k}}}{1+c^{\frac{1}{k}}\lambda_{2}^{\frac{1}{k}}}\right)^{2} \norm{f}^{2}$  for all
		$f \in \mathcal{H}$.
		Thus, $\{(A^{*})^{n}h\}_{n \geq 0}$ satisfies the lower frame condition for $\mathcal{H}$. Hence, by Proposition \ref{P2_P2.3}, any $f \in \mathcal{H}$ can also be recovered from $\{\langle A^{n}f,h\rangle\}_{n \geq 0}$ in a stable way.
	\end{proof}
	
	We now present an example to illustrate Theorem \ref{P2_Th3.13}.
	
	\begin{example}\label{P2_Exp3.14}
		Let $\mathcal{H}=\ell^2(\Z^{+})$ and let $\{e_{n}\}_{n \geq 0}$ be  an orthonormal basis of $\mathcal{H}$. Consider the left shift operator $A$ given in Example \ref{P2_Exp3.2}.
		Let $g=e_{0}$, $h=2e_{0}$. Then,  $\{(A^{*})^{n}g\}_{n \geq 0}=\{e_{n}\}_{n \geq 0}$ which is an orthonormal basis of the space $\ell^2(\Z^{+})$. Hence, every $f \in \mathcal{H}$ is stably recovered from the samples $\{\langle A^{n}f,g\rangle\}_{n \geq 0}$. Now, $g-h=-e_{0}$. Thus, $\{(A^{*})^{n}(g-h)\}_{n \geq 0}=\{-e_{n}\}_{n \geq 0}$ and $\{(A^{*})^{n}h\}_{n \geq 0}=\{2e_{n}\}_{n \geq 0}$. Also, for any finite scalar sequence $\{a_n\}_{n=0}^{\infty}$, we have
		\begin{align*}
			& \norm{\sum_{n=0}^{l}a_{n}(A^{*})^{n}g}^{k}  =\norm{\sum_{n=0}^{l}a_{n}e_{n}}^{k} =\left(\sum_{n=0}^{l}\abs{a_n}^{2}\right)^{\frac{k}{2}};\\
			& \norm{\sum_{n=0}^{l}a_{n}(A^{*})^{n}(g-h)}^{k}  =\norm{\sum_{n=0}^{l}a_{n}(-e_{n})}^{k} =\left(\sum_{n=0}^{l}\abs{a_n}^{2}\right)^{\frac{k}{2}};
			\intertext{and}
			& \norm{\sum_{n=0}^{l}a_{n}(A^{*})^{n}h}^{k}  =\norm{\sum_{n=0}^{l}a_{n}(2e_{n})}^{k} =2^{k}\left(\sum_{n=0}^{l}\abs{a_n}^{2}\right)^{\frac{k}{2}}.
		\end{align*}
		Choose $\lambda_{1}= \lambda_{2}=\frac{1}{2}$ and using above three equations in condition \ref{P2_S4_H2} of Theorem \ref{P2_Th3.13}, we get
		\begin{align*}
			\norm{\sum_{n=0}^{l}a_{n}(A^{*})^{n}(g-h)}^{k} =\left(\sum_{n=0}^{l}\abs{a_n}^{2}\right)^{\frac{k}{2}}  \leq \frac{1}{2}\left(\sum_{n=0}^{l}\abs{a_n}^{2}\right)^{\frac{k}{2}} + 2^{k-1} \left(\sum_{n=0}^{l}\abs{a_n}^{2}\right)^{\frac{k}{2}}
		\end{align*}
		as	if $k \geq 1$, then $2^{k-1} \geq 1$; and if $0<k<1$, then $\frac{1}{2} < 2^{k-1}<1$. Therefore, for every $k>0$,
		\begin{align*}
			\norm{\sum_{n=0}^{l}a_{n}(A^{*})^{n}(g-h)}^{k} \leq \lambda_{1} \norm{\sum_{n=0}^{l}a_{n}(A^{*})^{n}g}^{k} + \lambda_{2} \norm{\sum_{n=0}^{l}a_{n}(A^{*})^{n}h}^{k}.
		\end{align*}
		Also, $\max\{\lambda_{1}, \lambda_{2}\}=\max\{\frac{1}{2}, \frac{1}{2}\}= \frac{1}{2}$ and $\min\{1,2^{k-1}\}= \begin{cases}
			1, & \;\text{if} \;  k \geq 1,\\
			2^{k-1}, & \;  \text{if} \; 0<k <1 .
		\end{cases}$
		This implies, $\max{\{\lambda_{1}, \lambda_{2}\}} < \min{\{1, 2^{k-1}\}}$. Hence, by Theorem \ref{P2_Th3.13}, any $f \in \mathcal{H}$ can be stably recovered from the samples $\{\langle A^{n}f,h\rangle\}_{n \geq 0}$.
	\end{example}
	
	The following theorem investigate perturbation of the dynamical samples induced by changes in the evolution operator within the framework of Pollard–Hilding-type frame perturbation.

	\begin{theorem}\label{P2_Th3.15}
		Let $g \in \mathcal{H}$ and let $A$ and $B$ be  bounded linear operators acting on $\mathcal{H}$. Suppose that
		\begin{enumerate}[label=(\roman*)]
			\item 	any $f \in \mathcal{H}$ is recovered from the samples $\{\langle A^{n}f,g\rangle\}_{n \geq 0}$ in a stable way.
			\item For each positive real number $k$, there exist constants $\lambda_{1}$, $\lambda_{2} \geq 0$ such that 	$\max{\{\lambda_{1}, \lambda_{2}\}} < \min{\{1, 2^{k-1}\}}$ and
			\begin{align}
				\norm{\sum_{n=0}^{l}a_{n}((A^{*})^{n}-(B^{*})^{n})g} \leq \left[\lambda_{1} \norm{\sum_{n=0}^{l}a_{n}(A^{*})^{n}g}^{k} + \lambda_{2} \norm{\sum_{n=0}^{l}a_{n}(B^{*})^{n}g}^{k}\right]^{\frac{1}{k}}, \;\; l \in \Z^{+},
				\label{P2_S2_2}
			\end{align}
			for all finite scalar sequences $\{a_n\}_{n=0}^{\infty}$.
		\end{enumerate}
		Then, any $f \in \mathcal{H}$ can also be recovered from the samples $\{\langle B^{n}f,g\rangle\}_{n \geq 0}$ in a stable way.
	\end{theorem}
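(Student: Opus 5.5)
By Proposition \ref{P2_P2.3}, it suffices to show that $\{(B^{*})^{n}g\}_{n \geq 0}$ is a frame for $\mathcal{H}$. The argument mirrors the proof of Theorem \ref{P2_Th3.13}, with $(A^{*})^{n}h$ replaced by $(B^{*})^{n}g$ and $(A^{*})^{n}(g-h)$ replaced by $((A^{*})^{n}-(B^{*})^{n})g$. By hypothesis (i), $\{(A^{*})^{n}g\}_{n\ge 0}$ is a frame. Let $T_A$ be its pre-frame operator, $\alpha_A,\beta_A$ its frame bounds, and $c$ the constant of Theorem \ref{Theorem 2.8}.

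\textbf{Upper frame condition.} For a finite scalar sequence, the triangle inequality and \eqref{P2_S2_2} give
\begin{align*}
\norm{\sum_{n=0}^{l}a_{n}(B^{*})^{n}g} \le \norm{\sum_{n=0}^{l}a_{n}(A^{*})^{n}g} + \left[\lambda_{1}\norm{\sum_{n=0}^{l}a_{n}(A^{*})^{n}g}^{k}+\lambda_{2}\norm{\sum_{n=0}^{l}a_{n}(B^{*})^{n}g}^{k}\right]^{\frac1k}.
\end{align*}
Applying Theorem \ref{Theorem 2.8} to $x=\lambda_1^{1/k}\|\cdot\|$ and $y=\lambda_2^{1/k}\|\cdot\|$ bounds the bracket by $c^{1/k}\big(\lambda_1^{1/k}\|\sum a_n(A^*)^ng\|+\lambda_2^{1/k}\|\sum a_n(B^*)^ng\|\big)$. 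Rearranging yields
\[
(1-c^{1/k}\lambda_2^{1/k})\,\norm{\sum_{n=0}^{l} a_n(B^*)^ng} \le (1+c^{1/k}\lambda_1^{1/k})\,\norm{\sum_{n=0}^{l} a_n(A^*)^ng}.
\]
The right-hand side is at most $(1+c^{1/k}\lambda_1^{1/k})\sqrt{\beta_A}\,\|\{a_n\}\|$, so Theorem \ref{P2_Th2.5} shows that $\{(B^*)^ng\}$ is Bessel with a bounded pre-frame operator $T_B$.

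\textbf{Lower frame condition.} Since $T_A$, $T_B$ are bounded and finite sequences are dense in $\ell^2(\Z^+)$, inequality \eqref{P2_S2_2} extends to all of $\ell^2(\Z^+)$. Take $T_A^\dagger f=\{\langle f,(T_AT_A^*)^{-1}(A^*)^ng\rangle\}$ from Theorem \ref{P2_Th2.6}, so that $T_AT_A^\dagger f=f$. Inserting $a=T_A^\dagger f$ gives
\[
\norm{f-T_BT_A^\dagger f}\le c^{1/k}\lambda_1^{1/k}\norm{f}+c^{1/k}\lambda_2^{1/k}\norm{T_BT_A^\dagger f}.
\]
Lemma \ref{P2_L2.5} then makes $T_BT_A^\dagger$ invertible, with $\|(T_BT_A^\dagger)^{-1}\|\le \frac{1+c^{1/k}\lambda_2^{1/k}}{1-c^{1/k}\lambda_1^{1/k}}$. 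Writing $f=\sum_n\langle (T_BT_A^\dagger)^{-1}f,(T_AT_A^*)^{-1}(A^*)^ng\rangle (B^*)^ng$ and applying Cauchy--Schwarz to $\|f\|^4=|\langle f,f\rangle|^2$, we use the fact that the canonical dual frame has upper bound $1/\alpha_A$. This gives
\[
\sum_n|\langle f,(B^*)^ng\rangle|^2\ge \alpha_A\Big(\tfrac{1-c^{1/k}\lambda_1^{1/k}}{1+c^{1/k}\lambda_2^{1/k}}\Big)^2\|f\|^2 .
\]

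\textbf{Main obstacle: the constants.} Lemma \ref{P2_L2.5} requires $c^{1/k}\lambda_i^{1/k}<1$, i.e.\ $c\lambda_i<1$; the upper bound uses the same fact with $i=2$. For $k\ge1$, $c=1$ and $\lambda_i<1$ suffices. For $k<1$, $c=2^{1-k}$ and the condition becomes $\lambda_i<2^{k-1}$. The hypothesis $\max\{\lambda_1,\lambda_2\}<\min\{1,2^{k-1}\}$ covers both cases, so the proof only needs to check this; the rest is bookkeeping parallel to Theorem \ref{P2_Th3.13}.
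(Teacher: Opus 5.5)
Your proposal is correct and follows the paper's proof essentially step for step. For the Bessel bound you use the triangle inequality together with Theorem \ref{Theorem 2.8}; for the lower bound you use the pseudo-inverse $T_A^\dagger$, invertibility of $T_BT_A^\dagger$ via Lemma \ref{P2_L2.5}, and Cauchy--Schwarz with the canonical dual bound $1/\alpha_A$. Two points you make explicit are left implicit in the paper: the density/continuity argument that extends \eqref{P2_S2_2} to all of $\ell^2(\Z^+)$, and the check that $\max\{\lambda_1,\lambda_2\}<\min\{1,2^{k-1}\}$ is exactly the condition $c\lambda_i<1$.
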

	
	\begin{proof}
		Suppose each  $f \in \mathcal{H}$ is recovered from the samples $\{\langle A^{n}f,g\rangle\}_{n \geq 0}$ in a stable way. Then, by Proposition \ref{P2_P2.3}, $\{(A^{*})^{n}g\}_{n \geq 0}$ is a frame for $\mathcal{H}$.
		By hypothesis \eqref{P2_S2_2}, we have
		\begin{align*}
			\norm{\sum_{n=0}^{l}a_{n}(B^{*})^{n}g} &\leq  \norm{\sum_{n=0}^{l}a_{n}(A^{*})^{n}g}+\norm{\sum_{n=0}^{l}a_{n}((A^{*})^{n}-(B^{*})^{n})g}\\
			& \leq \norm{\sum_{n=0}^{l}a_{n}(A^{*})^{n}g} + \left[\lambda_{1}
			\norm{\sum_{n=0}^{l}a_{n}(A^{*})^{n}g}^{k} + \lambda_{2}
			\norm{\sum_{n=0}^{l}a_{n}(B^{*})^{n}g}^{k}\right]^{\frac{1}{k}} \\
			& \leq \norm{\sum_{n=0}^{l}a_{n}(A^{*})^{n}g} + c^{\frac{1}{k}} \left[ \left[\lambda_{1}^{\frac{1}{k}}
			\norm{\sum_{n=0}^{l}a_{n}(A^{*})^{n}g} + \lambda_{2}^{\frac{1}{k}}
			\norm{\sum_{n=0}^{l}a_{n}(B^{*})^{n}g}\right]^{k}\right]^{\frac{1}{k}}.
		\end{align*}
		The last step follows from   Theorem \ref{Theorem 2.8}. This implies that
		\begin{align*}
			(1-c^{\frac{1}{k}} \lambda_{2}^{\frac{1}{k}})	\norm{\sum_{n=0}^{l}a_{n}(B^{*})^{n}g} & \leq (1+c^{\frac{1}{k}} \lambda_{1}^{\frac{1}{k}}) \norm{\sum_{n=0}^{l}a_{n}(A^{*})^{n}g}, \quad \text{where} \quad c=\begin{cases}
				1, & \;\text{if} \;  k \geq 1,\\
				2^{1-k}, & \;  \text{if} \; k \leq 1 .
			\end{cases}
		\end{align*}
		Therefore,
		\begin{align*}
			\norm{\sum_{n=0}^{l}a_{n}(B^{*})^{n}g} \leq \frac{(1+c^{\frac{1}{k}} \lambda_{1}^{\frac{1}{k}})}{	(1-c^{\frac{1}{k}} \lambda_{2}^{\frac{1}{k}})}\norm{\sum_{n=0}^{l}a_{n}(A^{*})^{n}g} \ \Big(\text{as} \ \max{\{\lambda_{1}, \lambda_{2}\}} < \min{\{1, 2^{k-1}\}}\Big),
		\end{align*}
		for all finite scalar sequences $\{a_n\}_{n=0}^{\infty}$.
		Hence, $\{(B^{*})^{n}g\}_{n \geq 0}$  satisfies the upper frame condition. To show $\{(B^{*})^{n}g\}_{n \geq 0}$ satisfies the lower frame condition: Let $T_{A}$ be the pre-frame operator of  $\{(A^{*})^{n}g\}_{n \geq 0}$, and let $\alpha_{A}$ be its lower frame bound. Using inequality \eqref{P2_S2_2} for the sequence $\{a_{n}\}_{n=0}^{\infty}: = 	T^{\dagger}_{A}f = \{ \langle f, (T_{A}T^{*}_{A})^{-1}(A^{*})^{n}g \rangle \}_{n=0}^{\infty}$, we get
		\begin{align*}
			& \norm{\sum_{n=0}^{\infty}\langle f, (T_{A}T^{*}_{A})^{-1}(A^{*})^{n})g \rangle ((A^{*})^{n}-(B^{*})^{n})g} \\
			& \leq \left[\lambda_{1} \norm{\sum_{n=0}^{\infty}\langle f, (T_{A}T^{*}_{A})^{-1}(A^{*})^{n}g \rangle (A^{*})^{n}g}^{k}+ \lambda_{2} \norm{\sum_{n=0}^{\infty}\langle f, (T_{A}T^{*}_{A})^{-1}(A^{*})^{n}g \rangle (B^{*})^{n}g}^{k}\right]^{\frac{1}{k}}.
		\end{align*}
		Let $T_B$ be the pre-frame operator of  $\{(B^{*})^{n}g\}_{n \geq 0}$. Then, by using Theorem \ref{Theorem 2.8}, we have
		\begin{align*}
			\norm{f-T_{B}T^{\dagger}_{A}f} \leq \left[\lambda_{1} \norm{f}^{k} + \lambda_{2} \norm{T_{B}T^{\dagger}_{A}f}^{k}\right]^{\frac{1}{k}}  \leq c^{\frac{1}{k}} \lambda_{1}^{\frac{1}{k}} \norm{f} +c^{\frac{1}{k}} \lambda_{2}^{\frac{1}{k}}\norm{T_{B}T^{\dagger}_{A}f}.
		\end{align*}
		By using Lemma \ref{P2_L2.5}, $T_{B}T^{\dagger}_{A}$ is invertible, and $\norm{(T_{B}T^{\dagger}_{A})^{-1}} \leq \frac{1+c^{\frac{1}{k}} \lambda_{2}^{\frac{1}{k}}}{1-c^{\frac{1}{k}} \lambda_{1}^{\frac{1}{k}}}$, as
		$ \max{\{\lambda_{1}, \lambda_{2}\}} < \min{\{1, 2^{k-1}\}}$. We compute,
		\begin{align*}
			\norm{f}^{4}  = {\langle f,f \rangle}^{2} & = \abs{\sum_{n=0}^{\infty} \langle (T_{B}T^{\dagger}_{A})^{-1}f, (T_{A}T^{*}_{A})^{-1}(A^{*})^{n}g \rangle  \langle (B^{*})^{n}g, f \rangle}^{2} \\
			& \leq \sum_{n=0}^{\infty} \abs{\langle (T_{B}T^{\dagger}_{A})^{-1}f, (T_{A}T^{*}_{A})^{-1}(A^{*})^{n}g \rangle }^{2}
			\sum_{n=0}^{\infty} \abs{\langle (B^{*})^{n}g,f \rangle }^{2} \\
			& \leq \frac{1}{\alpha_{A}}  \norm{(T_{B}T^{\dagger}_{A})^{-1}f}^{2} \sum_{n=0}^{\infty} \abs{\langle (B^{*})^{n}g,f \rangle }^{2} \\
			& \leq \frac{1}{\alpha_{A}} \left(\frac{1+c^{\frac{1}{k}}\lambda_{2}^{\frac{1}{k}}}{1-c^{\frac{1}{k}}\lambda_{1}^{\frac{1}{k}}}\right)^{2} \norm{f}^{2} \sum_{n=0}^{\infty} \abs{\langle (B^{*})^{n}g,f \rangle }^{2}.
		\end{align*}
		This gives $\sum_{n=0}^{\infty} \abs{\langle (B^{*})^{n}g,f \rangle }^{2} \geq \alpha_{A}\left(\frac{1-c^{\frac{1}{k}}\lambda_{1}^{\frac{1}{k}}}{1+c^{\frac{1}{k}}\lambda_{2}^{\frac{1}{k}}}\right)^{2} \norm{f}^{2}$ for all $f \in \mathcal{H}$. Thus, $\{(B^{*})^{n}g\}_{n \geq 0}$ satisfies the lower frame condition in $\mathcal{H}$. Hence, by Proposition \ref{P2_P2.3}, any $f \in \mathcal{H}$ can also be recovered from the samples $\{\langle B^{n}f,g\rangle\}_{n \geq 0}$ in a stable way.
	\end{proof}
	
	The following example demonstrates Theorem \ref{P2_Th3.15}.
	
	\begin{example}\label{P2_Exp3.16}
		Let $\mathcal{H}=\ell^2(\Z^{+})$ and let $\{e_{n}\}_{n \geq 0}$ be  an orthonormal basis of $\mathcal{H}$. Consider the left shift operator $A$ given in Example \ref{P2_Exp3.2}.
		Let $g=e_{0}$. Then,  $\{(A^{*})^{n}g\}_{n \geq 0}=\{e_{n}\}_{n \geq 0}$ which is an orthonormal basis of the space $\ell^2(\Z^{+})$. Hence, every $f \in \mathcal{H}$ is stably recovered from the samples $\{\langle A^{n}f,g\rangle\}_{n \geq 0}$.
		Now, for any $k>0$, $2^{\frac{k-1}{k}}>0$. By using Archimedian Property, for every $k>0$, there always exist an $\epsilon \in (0,1)$ such that $\epsilon < 2^{\frac{k-1}{k}} $. Let $B:  \ell^2(\Z^{+}) \rightarrow \ell^2(\Z^{+})$ be a bounded linear operator such that $B^{*}(x_0, x_1,x_2, \ldots)=(0, \beta_{0}x_{0}, \beta_{1} x_1,\beta_{2}x_2, \ldots)$, where $\beta_{i}=1-\frac{\epsilon}{2^{i+1}}$. Then,  $\{(B^{*})^{n}g\}_{n \geq 0}=\{P_{n}e_{n}\}_{n \geq 0}$, where $P_{n}= \prod_{i=0}^{n-1} \beta_{i}$ and  $\{((A^{*})^{n}-(B^{*})^{n})g\}_{n \geq 0}=\{(1-P_{n})e_{n}\}_{n \geq 0}$. Also, for any finite scalar sequence $\{a_n\}_{n=0}^{\infty}$, we have
		\begin{align}
			& \norm{\sum_{n=0}^{l}a_{n}(A^{*})^{n}g}^{k} =\norm{\sum_{n=0}^{l}a_{n}e_{n}}^{k} =\left(\sum_{n=0}^{l}\abs{a_n}^{2}\right)^{\frac{k}{2}}; \label{lk441}\\
			& \norm{\sum_{n=0}^{l}a_{n}((A^{*})^{n}-(B^{*})^{n})g}^{k} =\norm{\sum_{n=0}^{l}a_{n}(1- P_{n})e_{n}}^{k} =\left(\sum_{n=0}^{l}\abs{a_n(1-P_{n})}^{2}\right)^{\frac{k}{2}}; \label{lk442}
			\intertext{and}
			& \norm{\sum_{n=0}^{l}a_{n}(B^{*})^{n}g}^{k} =\norm{\sum_{n=0}^{l}a_{n}(P_{n}e_{n})}^{k}
			=\left(\sum_{n=0}^{l}\abs{a_n P_{n}}^{2}\right)^{\frac{k}{2}}. \label{lk443}
		\end{align}
		Note that, $1-\epsilon < P_{n} \leq 1- \frac{\epsilon}{2}$. This implies, $\frac{\epsilon}{2} \leq 1-P_{n}< \epsilon$.
		Thus,
		\begin{align*}
			\left(\sum_{n=0}^{l}\abs{a_n(1-P_{n})}^{2}\right)^{\frac{k}{2}}  < \epsilon^{k} \left(\sum_{n=0}^{l}\abs{a_n}^{2}\right)^{\frac{k}{2}}.
		\end{align*}
		Now, using \eqref{lk441}, \eqref{lk442} and \eqref{lk443}, for every $k>0$, we have
		\begin{align*}
			\norm{\sum_{n=0}^{l}a_{n}((A^{*})^{n}-(B^{*})^{n})g}^{k} 	 & =\left(\sum_{n=0}^{l}\abs{a_n(1-P_{n})}^{2}\right)^{\frac{k}{2}} \\
			& \leq \epsilon^{k}\left(\sum_{n=0}^{l}\abs{a_n}^{2}\right)^{\frac{k}{2}} + \epsilon^{k}  \left(\sum_{n=0}^{l}\abs{a_n P_{n}}^{2}\right)^{\frac{k}{2}}\\
			& = \lambda_{1} \norm{\sum_{n=0}^{l}a_{n}(A^{*})^{n}g}^{k} + \lambda_{2} \norm{\sum_{n=0}^{l}a_{n}(B^{*})^{n}g}^{k},
		\end{align*}
		where we choose $\lambda_{1}= \lambda_{2}=\epsilon^{k}$.
		Also, $\max\{\lambda_{1}, \lambda_{2}\}=\max\{\epsilon^{k}, \epsilon^{k}\}= \epsilon^{k}$ and $\min\{1,2^{k-1}\}= \begin{cases}
			1, & \;\text{if} \;  k \geq 1,\\
			2^{k-1}, & \;  \text{if} \; 0<k <1 .
		\end{cases}$
		This implies, $\max{\{\lambda_{1}, \lambda_{2}\}} < \min{\{1, 2^{k-1}\}}$. Hence, by Theorem \ref{P2_Th3.15}, any $f \in \mathcal{H}$ can be stably recovered from the samples $\{\langle B^{n}f,g\rangle\}_{n \geq 0}$.
	\end{example}
	
	The next theorem establishes the preservation of stable recovery of the initial state unde simultaneous perturbations of the sampling vector and the evolution operator.
	\begin{theorem}\label{P2_Th3.17}
		Let $g,h \in \mathcal{H}$ and  let $A$ and $B$ be  bounded linear operators acting on  $\mathcal{H}$. Suppose that
		\begin{enumerate}[label=(\roman*)]
			\item 	\label{P2_S4_H31}any $f \in \mathcal{H}$ is recovered from the samples $\{\langle A^{n}f,g\rangle\}_{n \geq 0}$ in a stable way.
			\item \label{P2_S4_H32} For each positive real number $k$, there exist constants $\lambda_{1}$, $\lambda_{2} \geq 0$ such that 	$\max{\{\lambda_{1}, \lambda_{2}\}} < \min{\{1, 2^{k-1}\}}$ and
			\begin{align}
				\notag
				& \norm{\sum_{n=0}^{l}a_{n}(A^{*})^{n}(g-h)} + \norm{\sum_{n =0}^{l}a_{n}((A^{*})^{n}-(B^{*})^{n})h} \\
				&\leq \left[\lambda_{1} \norm{\sum_{n=0}^{l}a_{n}(A^{*})^{n}g}^{k} + \lambda_{2} \norm{\sum_{n=0}^{l}a_{n}(B^{*})^{n}h}^{k}\right]^{\frac{1}{k}}, \;\; l \in \Z^{+},
				\label{P2_S2_3}
			\end{align}
			for all finite scalar sequences $\{a_n\}_{n=0}^{\infty}$.
		\end{enumerate}
		Then, any $f \in \mathcal{H}$ can also be recovered from the samples $\{\langle B^{n}f,h\rangle\}_{n \geq 0}$ in a stable way.
	\end{theorem}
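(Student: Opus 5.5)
By Proposition \ref{P2_P2.3}, it suffices to show that $\{(B^{*})^{n}h\}_{n \geq 0}$ is a frame for $\mathcal{H}$. The plan is to follow the two-step scheme of Theorems \ref{P2_Th3.13} and \ref{P2_Th3.15}. The one new ingredient is that the triangle inequality is used to merge the two difference terms on the left of \eqref{P2_S2_3} into a single difference. By hypothesis \ref{P2_S4_H31} and Proposition \ref{P2_P2.3}, $\{(A^{*})^{n}g\}_{n \geq 0}$ is a frame with pre-frame operator $T_{A_g}$ and lower bound $\alpha_{A_g}$. For every finite scalar sequence we write $(A^{*})^{n}g-(B^{*})^{n}h=(A^{*})^{n}(g-h)+((A^{*})^{n}-(B^{*})^{n})h$. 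This gives
\begin{align*}
\norm{\sum_{n=0}^{l}a_{n}((A^{*})^{n}g-(B^{*})^{n}h)} \leq \left[\lambda_{1} \norm{\sum_{n=0}^{l}a_{n}(A^{*})^{n}g}^{k} + \lambda_{2} \norm{\sum_{n=0}^{l}a_{n}(B^{*})^{n}h}^{k}\right]^{\frac{1}{k}},
\end{align*}
which is exactly a Pollard–Hilding condition between the sequences $\{(A^{*})^{n}g\}$ and $\{(B^{*})^{n}h\}$.

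\textbf{Upper bound.} Write $\norm{\sum a_n (B^{*})^{n}h} \leq \norm{\sum a_n (A^{*})^{n}g}+\norm{\sum a_n((A^{*})^{n}g-(B^{*})^{n}h)}$. Bound the second term using the display above, and split the $k$-th root via Theorem \ref{Theorem 2.8}, as in the proof of Theorem \ref{P2_Th3.13}. This yields
$(1-c^{1/k}\lambda_2^{1/k})\norm{\sum a_n (B^{*})^{n}h}\le (1+c^{1/k}\lambda_1^{1/k})\norm{\sum a_n (A^{*})^{n}g}$.
Provided $c^{1/k}\lambda_2^{1/k}<1$, Theorem \ref{P2_Th2.5} then shows that $\{(B^{*})^{n}h\}$ is Bessel and that its synthesis operator $T_{B_h}$ is bounded.

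\textbf{Lower bound.} By continuity, the displayed inequality extends to all $\ell^2$ sequences, since both sides are continuous in $\{a_n\}$ once the Bessel property is known. Apply it to $\{a_n\}=T^{\dagger}_{A_g}f$ from Theorem \ref{P2_Th2.6}. Since $T_{A_g}T^{\dagger}_{A_g}f=f$, we get
$\norm{f-T_{B_h}T^{\dagger}_{A_g}f}\le c^{1/k}\lambda_1^{1/k}\norm{f}+c^{1/k}\lambda_2^{1/k}\norm{T_{B_h}T^{\dagger}_{A_g}f}$.
Lemma \ref{P2_L2.5} then makes $T_{B_h}T^{\dagger}_{A_g}$ invertible, with $\norm{(T_{B_h}T^{\dagger}_{A_g})^{-1}}\le (1+c^{1/k}\lambda_2^{1/k})/(1-c^{1/k}\lambda_1^{1/k})$. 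Expand $f=\sum_n\langle (T_{B_h}T^{\dagger}_{A_g})^{-1}f,(T_{A_g}T^{*}_{A_g})^{-1}(A^{*})^{n}g\rangle (B^{*})^{n}h$, compute $\norm{f}^4=\langle f,f\rangle^2$, and apply Cauchy–Schwarz. The canonical dual frame has upper bound $1/\alpha_{A_g}$, so we obtain the lower frame bound $\alpha_{A_g}\big((1-c^{1/k}\lambda_1^{1/k})/(1+c^{1/k}\lambda_2^{1/k})\big)^2$.

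\textbf{Main obstacle.} The delicate step is checking that $c^{1/k}\lambda_i^{1/k}<1$ follows from $\max\{\lambda_1,\lambda_2\}<\min\{1,2^{k-1}\}$. For $k\ge1$ we have $c=1$, and $\lambda_i<1$ suffices. For $k<1$ we have $c=2^{1-k}$, and the requirement is $\lambda_i<2^{k-1}$, which is exactly what the hypothesis supplies. The remaining steps are verbatim adaptations of Theorem \ref{P2_Th3.13}, with $g\mapsto(A,g)$ and $h\mapsto(B,h)$. Proposition \ref{P2_P2.3} then gives stable recovery from $\{\langle B^{n}f,h\rangle\}_{n\ge0}$.
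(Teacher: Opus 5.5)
Your proposal is correct and is essentially the paper's proof. It has the same two steps: the Bessel property via the triangle inequality and Theorem \ref{Theorem 2.8}, then invertibility of $T_{B_h}T^{\dagger}_{A_g}$ by Lemma \ref{P2_L2.5} followed by the Cauchy--Schwarz and canonical-dual argument for the lower bound, with identical constants. The differences are cosmetic: you merge the two difference terms into a single Pollard--Hilding inequality before the upper-bound step, whereas the paper uses a three-term triangle inequality there and merges only for the lower bound. You also spell out the continuity extension to $\ell^2$ and the check $c^{1/k}\lambda_i^{1/k}<1$, which the paper leaves implicit.
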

	
	\begin{proof}
		First, we show that $\{(B^{*})^{n}h\}_{n \geq 0}$  satisfies upper frame condition. By hypothesis \ref{P2_S4_H31}, every $f \in \mathcal{H}$ is recovered from the samples $\{\langle A^{n}f,g\rangle\}_{n \geq 0}$ in a stable way. Therefore, by Proposition \ref{P2_P2.3}, $\{(A^{*})^{n}g\}_{n \geq 0}$ is a frame for $\mathcal{H}$. Now, consider the following and using \eqref{P2_S2_3}, we have
		\begin{align*}
			\norm{\sum_{n=0}^{l}a_{n}(B^{*})^{n}h} & = \norm{\sum_{n=0}^{l}a_{n}((B^{*})^{n}h-(A^{*})^{n}h+(A^{*})^{n}h-(A^{*})^{n}g+(A^{*})^{n}g)}\\
			&	\leq  	\norm{\sum_{n=0}^{l}a_{n}(A^{*})^{n}(g-h)} + \norm{\sum_{n =0}^{l}a_{n}((A^{*})^{n}-(B^{*})^{n})h} +\norm{\sum_{n=0}^{l}a_{n}(A^{*})^{n}g} \\
			& \leq   \left[\lambda_{1}
			\norm{\sum_{n=0}^{l}a_{n}(A^{*})^{n}g}^{k} + \lambda_{2}
			\norm{\sum_{n=0}^{l}a_{n}(B^{*})^{n}h}^{k}\right]^{\frac{1}{k}} + \norm{\sum_{n=0}^{l}a_{n}(A^{*})^{n}g}\\
			& \leq  c^{\frac{1}{k}} \left[ \left[\lambda_{1}^{\frac{1}{k}}
			\norm{\sum_{n=0}^{l}a_{n}(A^{*})^{n}g} + \lambda_{2}^{\frac{1}{k}}
			\norm{\sum_{n=0}^{l}a_{n}(B^{*})^{n}h}\right]^{k}\right]^{\frac{1}{k}}  +\norm{\sum_{n=0}^{l}a_{n}(A^{*})^{n}g}.
		\end{align*}
		Observe that the final step follows from Theorem \ref{Theorem 2.8}. Consequently,
		\begin{align*}
			(1-c^{\frac{1}{k}} \lambda_{2}^{\frac{1}{k}})	\norm{\sum_{n=0}^{l}a_{n}(B^{*})^{n}h} & \leq (1+c^{\frac{1}{k}} \lambda_{1}^{\frac{1}{k}}) \norm{\sum_{n=0}^{l}a_{n}(A^{*})^{n}g}, \quad \text{where} \quad c=\begin{cases}
				1, & \;\text{if} \;  k \geq 1,\\
				2^{1-k}, & \;  \text{if} \; k \leq 1 .
			\end{cases}
		\end{align*}
		Therefore, by hypothesis \ref{P2_S4_H32}, we have
		\begin{align*}
			\norm{\sum_{n=0}^{l}a_{n}(B^{*})^{n}h} \leq \frac{(1+c^{\frac{1}{k}} \lambda_{1}^{\frac{1}{k}})}{	(1-c^{\frac{1}{k}} \lambda_{2}^{\frac{1}{k}})}\norm{\sum_{n=0}^{l}a_{n}(A^{*})^{n}g}
		\end{align*}
		for all finite scalar sequences $\{a_n\}_{n=0}^{\infty}$. Hence, by Theorem \ref{P2_Th2.5}, $\{(B^{*})^{n}h\}_{n \geq 0}$ satisfies upper frame condition.
		
		Now, we prove that $\{(B^{*})^{n}h\}_{n \geq 0}$ satisfies lower frame condition. Let $T_{A_g}$ be the pre-frame operator of  $\{(A^{*})^{n}g\}_{n \geq 0}$, and let $\alpha_{A_g}$ be its lower frame bound.
		Now, consider the following and using \eqref{P2_S2_3}, we have
		\begin{align}
			\norm{\sum_{n=0}^{\infty}a_{n}((A^{*})^{n}g-(B^{*})^{n}h)} & = \norm{\sum_{n=0}^{\infty}a_{n}((A^{*})^{n}g-(A^{*})^{n}h+(A^{*})^{n}h-(B^{*})^{n}h)} \notag\\
			& \leq 	\norm{\sum_{n=0}^{\infty}a_{n}(A^{*})^{n}(g-h)} + \norm{\sum_{n =0}^{\infty}a_{n}((A^{*})^{n}-(B^{*})^{n})h} \notag \\
			& \leq \left[\lambda_{1} \norm{\sum_{n=0}^{\infty}a_{n}(A^{*})^{n}g}^{k} + \lambda_{2} \norm{\sum_{n=0}^{\infty}a_{n}(B^{*})^{n}h}^{k}\right]^{\frac{1}{k}}.
			\label{P2_S2_a}
		\end{align}
		Let $T^{\dagger}_{A_g}$ denotes the pseudo-inverse of $T_{A_g}$. Then, by applying \eqref{P2_S2_a} to $\{a_{n}\}_{n=0}^{\infty} = 	T^{\dagger}_{A_g}f$, we obtain that
		\begin{align*}
			& \norm{\sum_{n=0}^{\infty}\langle f, (T_{A_g}T^{*}_{A_g})^{-1}(A^{*})^{n})g \rangle ((A^{*})^{n}g-(B^{*})^{n}h)} \\
			&  \leq \left[\lambda_{1} \norm{\sum_{n=0}^{\infty}\langle f, (T_{A_g}T^{*}_{A_g})^{-1}(A^{*})^{n}g \rangle (A^{*})^{n}g}^{k}+ \lambda_{2} \norm{\sum_{n=0}^{\infty}\langle f, (T_{A_g}T^{*}_{A_g})^{-1}(A^{*})^{n}g \rangle (B^{*})^{n}h}^{k}\right]^{\frac{1}{k}}.
		\end{align*}
		Let $T_{B_h}$ be the pre-frame operator of	$\{(B^{*})^{n}h\}_{n \geq 0}$. Then, by using Theorem \ref{Theorem 2.8}, we have
		\begin{align*}
			\norm{f-T_{B_h}T^{\dagger}_{A_g}f} \leq \left[\lambda_{1} \norm{f}^{k} + \lambda_{2} \norm{T_{B_h}T^{\dagger}_{A_g}f}^{k}\right]^{\frac{1}{k}} \leq c^{\frac{1}{k}} \lambda_{1}^{\frac{1}{k}} \norm{f} +c^{\frac{1}{k}} \lambda_{2}^{\frac{1}{k}}\norm{T_{B_h}T^{\dagger}_{A_g}f}.
		\end{align*}
		By using Lemma \ref{P2_L2.5}, $T_{B_h}T^{\dagger}_{A_g}$ is invertible, and $ \norm{(T_{B_h}T^{\dagger}_{A_g})^{-1}} \leq \frac{1+c^{\frac{1}{k}} \lambda_{2}^{\frac{1}{k}}}{1-c^{\frac{1}{k}} \lambda_{1}^{\frac{1}{k}}}$, as
		$ \max{\{\lambda_{1}, \lambda_{2}\}} < \min{\{1, 2^{k-1}\}}$. Note that every $ f \in \mathcal{H}$ can be expressed as:
		\begin{align*}
			f=T_{B_h}T^{\dagger}_{A_g}(T_{B_h}T^{\dagger}_{A_g})^{-1}f = \sum_{n=0}^{\infty} \langle (T_{B_h}T^{\dagger}_{A_g})^{-1}f, (T_{A_g}T^{*}_{A_g})^{-1}(A^{*})^{n}g \rangle (B^{*})^{n}h.
		\end{align*}
		Using above  representation of $ f \in \mathcal{H}$, we compute
		\begin{align*}
			\norm{f}^{4} = {\langle f,f \rangle}^{2} & = \abs{\sum_{n=0}^{\infty} \langle (T_{B_h}T^{\dagger}_{A_g})^{-1}f, (T_{A_g}T^{*}_{A_g})^{-1}(A^{*})^{n}g \rangle  \langle (B^{*})^{n}h, f \rangle}^{2} \\
			& \leq \sum_{n=0}^{\infty} \abs{\langle (T_{B_h}T^{\dagger}_{A_g})^{-1}f, (T_{A_g}T^{*}_{A_g})^{-1}(A^{*})^{n}g \rangle }^{2}
			\sum_{n=0}^{\infty} \abs{\langle (B^{*})^{n}h,f \rangle }^{2} \\
			& \leq \frac{1}{\alpha_{A_g}}  \norm{(T_{B_h}T^{\dagger}_{A_g})^{-1}f}^{2} \sum_{n=0}^{\infty} \abs{\langle (B^{*})^{n}h,f \rangle }^{2} \\
			& \leq \frac{1}{\alpha_{A_g}} \left(\frac{1+c^{\frac{1}{k}}\lambda_{2}^{\frac{1}{k}}}{1-c^{\frac{1}{k}}\lambda_{1}^{\frac{1}{k}}}\right)^{2} \norm{f}^{2} \sum_{n=0}^{\infty} \abs{\langle (B^{*})^{n}h,f \rangle }^{2},
		\end{align*}
		which yields
		\begin{align*}
			\sum_{n=0}^{\infty} \abs{\langle (B^{*})^{n}h,f \rangle }^{2} \geq \alpha_{A_g}\left(\frac{1-c^{\frac{1}{k}}\lambda_{1}^{\frac{1}{k}}}{1+c^{\frac{1}{k}}\lambda_{2}^{\frac{1}{k}}}\right)^{2} \norm{f}^{2} \,\, \text{for all} \,\, f \in \mathcal{H}.
		\end{align*}
		This gives lower frame condition for $\{(B^{*})^{n}h\}_{n \geq 0}$. Hence, by Proposition \ref{P2_P2.3}, any $f \in \mathcal{H}$ can also be recovered from the samples $\{\langle B^{n}f,h\rangle\}_{n \geq 0}$ in a stable way.
	\end{proof}
	
	The following example serves to illustrate Theorem \ref{P2_Th3.17}.
	
	\begin{example}\label{P2_Exp3.18}
		Let $\mathcal{H}$ and  $A:\mathcal{H} \rightarrow \mathcal{H}$ be same as given in Example \ref{P2_Exp3.2}. Take $B = A$, $g=e_{0}$, and $h=2e_{0}$. Then, $B^{*}=A^{*}$ and  $\{((A^{*})^{n}-(B^{*})^{n})h\}_{n \geq 0}=\{0\}_{n \geq 0}$. Also, $\norm{\sum_{n=0}^{l}a_{n}((A^{*})^{n}-(B^{*})^{n})h}$ $=0$. Then, by using the same step as in Example \ref{P2_Exp3.14}, the hypothesis of Theorem \ref{P2_Th3.17} is satisfied. Hence, by Theorem \ref{P2_Th3.17}, any $f \in \mathcal{H}$ can be stably recovered from the samples $\{\langle B^{n}f,h\rangle\}_{n \geq 0}$.
	\end{example}
The stable recovery results obtained in this section naturally lead to the following concluding remark concerning the stability of general frames in separable Hilbert spaces under Pollard–Hilding-type perturbation.
	\begin{remark}
Let  $\{f_{i}\}_{i=1}^{\infty}$ be a frame for $\mathcal{H}$ and  $\{g_{i}\}_{i=1}^{\infty} \subset \mathcal{H}$. If $\{f_{i}\}_{i=1}^{\infty}$  and  $\{g_{i}\}_{i=1}^{\infty}$ are close in the sense of Pollard–Hilding, which is given in the conditions of  Theorem \ref{P2_Th3.13}, then $\{g_{i}\}_{i=1}^{\infty}$ constitutes  a frame for $\mathcal{H}$.
	\end{remark}

	\section{Conclusion}
	A sequence  $\{\psi_{i}\}_{i=1}^{\infty}$  in a Hilbert space $\mathcal{H}$ is said to be \emph{complete} if $\overline{\text{span}}\{\psi_{i}\}_{i=1}^{\infty} = \mathcal{H}$. Harry Pollard, in  \cite{Pollard}, proved a Paley-Wiener-type stability result for complete sequences in Hilbert spaces. Further, he also gives a criterion  for incomplete sequences in Hilbert spaces in terms of a Paley-Wiener-type condition. After four years, Hilding \cite{NCTPWT} presented a Paley-Wiener-type stability result for completeness of a sequence  in terms of a linear transformation on Hilbert space.
Near the middle of the 20th century, Retherford \cite{BSPWC} generalized stability results in the sense of Paley-Wiener for basic sequences and completeness of sequences in complete linear metric spaces and normed spaces and introduced Nagy-type and  Pollard--Hilding-type stability conditions  [Definition \ref{concls009}].
Casazza and Kalton, in \cite{Casz},  extended the Paley-Wiener-type stability results to linear operators on Banach spaces.  Frames in separable Hilbert spaces are complete sequences which satisfies an operator inequality, also known as the frame inequality, that is a powerful tool in both pure and engineering science.  In \cite{POATH}, Casazza and Christensen showed   applications of Paley-Wiener stability to frames in separable Hilbert spaces. Paley-Wiener type stability conditions for frames and Riesz bases were reviewed in \cite{C.Heil, Young}.

On the other hand, Aldroubi et al. \cite{DSTS, DSSIO, ICNO} studied conditions
on $A$, $\mathcal{G}$ and a function $L : \mathcal{G} \rightarrow \N \cup \{\infty\}$, that allow the stable recovery of the initial state of the homogenous  discrete dynamical system of the form $f_n = A f_{n-1}=A^{n}f, f_0 = f$, from the set of samples
$	\{\langle A^n f, g \rangle  : g \in \mathcal{G}, 0 \leq  n < L(g)\}$, where $A$ is the evolution operator, $f_0$ is the initial state, and $\mathcal{G} \subset \mathcal{H}$, is a finite or countably infinite set.

In this work, we give applications of Nagy-type and  Pollard--Hilding-type stability conditions to homogenous dynamical sampling systems for stable recovery of the initial state in terms of frames. More precisely, we establish sufficient conditions for the stable recovery of an initial state from perturbed dynamical samples, which are derived by altering the sampling vector, the evolution operator, or both concurrently within the context of Nagy-type and Pollard–Hilding-type perturbations of frames. A further interesting direction would be applications of perturbation of weaving frames \cite{DeepI, deep2} in the stable recovery of source terms and initial states in  dynamical sampling systems.
	
	$$\text{\textbf{\Large{Acknowledgements}}}$$
	The  research of Ruchi is supported by the National Board for Higher Mathematics (NBHM), Grant No.: 0203/23/2024/ R $\&$ D-II/647. Lalit Kumar Vashisht is  supported by the Faculty Research Programme Grant-IoE, University of Delhi \ (Grant No.: Ref. No./IoE/2025-26/12/FRP).

	\textbf{Ruchi}, Department of Mathematics,
	University of Delhi, Delhi-110007, India.\\
	Email: rgarg@maths.du.ac.in\\
	
	\textbf{Lalit Kumar Vashisht}, Department of Mathematics,
	University of Delhi, Delhi-110007, India.\\
	Email: lalitkvashisht@gmail.com
\end{document}